\g@addto@macro\normalsize{%
  \setlength\abovedisplayskip{10pt}
  \setlength\belowdisplayskip{10pt}
  \setlength\abovedisplayshortskip{5pt}
  \setlength\belowdisplayshortskip{8pt}
}
\newtheorem{thm}{Theorem}[section]
\newtheorem{cor}[thm]{Corollary}
\newtheorem{lem}[thm]{Lemma}
\newtheorem{prop}[thm]{Proposition}
\theoremstyle{definition}
\newtheorem{fact}[thm]{Facts}
\newtheorem{ex}[thm]{Example}
\newtheorem{rmk}[thm]{Remark}
\newtheorem{dfn}[thm]{Definition}
\renewcommand{\epsilon}{\varepsilon}
\newcommand{\spann}{\operatorname{span}\nolimits}
\newcommand{\id}{\operatorname{id}\nolimits}
\newcommand{\NN}{\mathbb{N}}
\newcommand{\RR}{\mathbb{R}}
\newcommand{\CC}{\mathbb{C}}
\newcommand{\diam}{\hfill\raisebox{-0.75pt}{\scalebox{1.4}{$\diamond$}}}
\newcommand{\hypref}[1]{\hyperref[Q1]{\color{RoyalBlue}{\bf #1}}}
\newcommand{\hypreff}[1]{\hyperref[Q2]{\color{RoyalBlue}{\bf #1}}}
\newcommand{\hyprefj}[1]{\hyperref[J]{\color{red}{#1}}}
\DeclareMathOperator*{\fsum}{%
  \mathchoice
    {\raisebox{-.0\height}{\scalebox{1.15}{$\sum$}}}
    {\raisebox{-.0\height}{\scalebox{1}{$\sum$}}}
    {\raisebox{-.0\height}{\scalebox{0.75}{$\sum$}}}
    {\raisebox{-.05\height}{\scalebox{0.55}{$\sum$}}}}
	\tikzset{commutative diagrams/.cd, 
		mysymbol/.style = {start anchor=center, end anchor = center, draw = none}}
\DeclareMathSymbol{\sm}{\mathbin}{AMSa}{"39}
\DeclareMathSymbol{\shortminus}{\mathbin}{AMSa}{"39}
\DeclareMathSymbol{\shortminus}{\mathbin}{AMSa}{"39}
\newlist{myitemize}{itemize}{1}
\setlist[myitemize,1]{leftmargin=26pt, noitemsep, topsep=0pt}
\newcommand{\Born}{\ensuremath{\text{\bf\textsf{Born}}}}
\newcommand{\CBorn}{\ensuremath{\text{\bf\textsf{CBorn}}}}
\newcommand{\NBorn}{\ensuremath{\text{\bf\textsf{NBorn}}}}
\newcommand{\SNrm}{\ensuremath{\text{\bf\textsf{SNrm}}}}
\newcommand{\Tc}{\ensuremath{\text{\bf\textsf{Tc}}}}
\newcommand{\NTc}{\ensuremath{\text{\bf\textsf{NTc}}}}
\newcommand{\TcH}{\ensuremath{\text{\bf\textsf{TcHaus}}}}
\newcommand{\Precon}{\ensuremath{\text{\bf\textsf{Pre}}}}
\newcommand{\Con}{\ensuremath{\text{\bf\textsf{Con}}}}
\newcommand{\TcUbor}{\ensuremath{\text{\bf\textsf{ubTc}}}}
\newcommand{\topp}{\ensuremath{\operatorname{t}}}
\newcommand{\bor}{\ensuremath{\operatorname{b}}}
\newcommand{\colim}{\ensuremath{\operatorname*{colim}}}
\newcommand{\LBb}{\ensuremath{\text{\bf\textsf{LBb}}}}
\newcommand{\bLB}{\ensuremath{\text{\bf\textsf{bLB}}}}
\newcommand{\LB}{\ensuremath{\text{\bf\textsf{LB}}}}
\newcommand{\LBr}{\ensuremath{\text{\bf\textsf{rLB}}}}
\newcommand{\Ban}{\ensuremath{\text{\bf\textsf{Ban}}}}
\newcommand{\LBH}{\ensuremath{\text{\bf\textsf{LBHaus}}}}
\newcommand{\ub}{\ensuremath{\operatorname{ub}}}
\begin{document}

\allowdisplaybreaks
$ $
\vspace{-90pt}

\title{Bornological LB-spaces\\[2pt]and idempotent adjunctions}

\author{Jack Kelly\hspace{0.5pt}\MakeLowercase{$^{\text{1}}$}, Lenny Neyt\hspace{0.5pt}\MakeLowercase{$^{\text{2},\,\text{4}}$} and Sven-Ake\ Wegner\hspace{0.5pt}\MakeLowercase{$^{\text{3},\,\text{4}}$}}

\renewcommand{\thefootnote}{}
\hspace{-1000pt}\footnote{\hspace{5.5pt}2020 \emph{Mathematics Subject Classification}: Primary 18E05, 46A13; Secondary 46M10, 46A45, 18G80.\vspace{1.6pt}}


\hspace{-1000pt}\footnote{\hspace{5.5pt}\emph{Key words and phrases}: LB-space, bornological space, bornologification, idempotent adjunction. \vspace{1.6pt}}

\hspace{-1000pt}\footnote{\hspace{0pt}$^{1}$\,Mathematical Institute, University of Oxford, Woodstock Road, Oxford, OX2 6GG, UK.\vspace{1.6pt}}

\hspace{-1000pt}\footnote{\hspace{0pt}$^{2}$\,University of Vienna, Faculty of Mathematics, Oskar-Morgenstern-Platz 1, 1090 Vienna, Austria.\vspace{1.6pt}}

\hspace{-1000pt}\footnote{\hspace{0pt}$^{3}$\,Corresponding author: University of Hamburg, Department of Mathematics, Bundesstra\ss{}e 55, 20146 Ham-\newline\phantom{x}\hspace{1.2pt}burg, Germany, phone: +49\,(0)\,40\:42838\:51\:20, e-mail: sven.wegner@uni-hamburg.de.\vspace{1.6pt}}

\hspace{-1000pt}\footnote{\hspace{0pt}$^{4}$\,This research was funded by the Austrian Science Fund\,(FWF); Project no.~10.55776/ESP8128624, and by\newline\phantom{x}\hspace{1.2pt}the German Research Foundation\,(DFG); Project no.~507660524.}


\begin{abstract}
The notion of an LB-space was introduced by Grothendieck in his 1953 th\`{e}se, referring to a countable colimit of Banach spaces taken within the category of locally convex topological vector spaces, and refining prior work done by Dieudonn\'{e}, Schwartz and K\"othe. Recently, two different notions of `bornological LB-spaces' emerged: one, given by Stempfhuber, refers to countable colimits of Banach spaces as well, but now taken in the category of bornological vector spaces. The other one, given by Bambozzi, Ben-Bassat and Kremnizer, refers to bornologifications of regular LB-spaces, i.e., of LB-spaces in the Grothendieck sense having the additional property that every bounded subset of the colimit is contained and bounded in one of its Banach steps. In this note, we show that the two notions are distinct, but nevertheless closely related. This involves, in particular, an intimate study of the idempotent adjunction of the bornologification and topologification functors.
\end{abstract}

\maketitle

\vspace{-16pt}

\section{Introduction}\label{SEC-1}

Bornological vector spaces ($\Born$) were defined and systematically studied for the first time in the 1970s. Since then, they have been used occasionally in geometry; in particular, they are related to the recently introduced formalism of condensed mathematics. Bornological vector spaces often behave better than locally convex topological vector spaces $(\Tc)$, for example, in a colimit of locally convex spaces there can exist sets that are bounded in the colimit topology but which are not the image of a bounded set in one of the steps \--- taking the colimit in the category of bornological vector spaces instead, a non-regular behaviour like this cannot happen. One can pass between locally convex and bornological vector spaces via the adjoint pair of bornologification $(\,\cdot\,)^{\bor}$ and topologification $(\,\cdot\,)^{\topp}$ functors, and the fixed points of their compositions, the so-called normal objects, form particularly well-behaved subcategories. Bambozzi, Ben-Bassat and Kremnizer's bornological LB-spaces, which we mentioned in the abstract and will refer to as bLB-spaces ($\bLB$) in this text, account for this paradigm, while Stempfhuber transfers the topological definition directly to the bornological side. We will refer to the latter as LBb-spaces ($\LBb$) and obtain the following picture\vspace{-4pt}
\begin{equation*}
\begin{tikzcd}
 \Tc \arrow[r, shift left=0.75ex, "(\,\cdot\,)^{\bor}"]  &\arrow[l, shift right=-0.75ex, "(\,\cdot\,)^{\topp}"] \Born\\[2pt]
\LB\arrow[left hook->]{u}&\arrow[two heads]{l}{}\LBb\arrow[hookrightarrow]{u}\\[2pt]
\LBr\arrow[left hook->]{u}\arrow[r, shift left=0.75ex, "\sim"] &\arrow[l, shift right=-0.75ex, "\sim"] \arrow[hookrightarrow]{u}\bLB\hspace{-4pt}
\end{tikzcd}\vspace{-4pt}
\end{equation*}
illustrating that $(\,\cdot\,)^{\bor}$ and $(\,\cdot\,)^{\topp}$ are quasiinverses between regular LB-spaces $(\LBr)$ in the topological sense and bornological LB-spaces in the sense of Bambozzi, Ben-Bassat and Kremnizer. While Stempfhuber's bornological LB-spaces get mapped surjectively onto all LB-spaces ($\LB$), the latter indeed do not get mapped back to where they came from, leading to the question of what can be said about the range of the functor $(\,\cdot\,)^{\bor}\colon\LB\rightarrow\Born$. In this paper, we will give several answers to this question, which will also yield new characterisations of bLB-spaces, based on classic methods from functional analysis as well as an abstract study of idempotent adjunctions of functors.

\smallskip

Using our characterisations, we will give examples of objects in $\LBr$ and $\bLB$. The DNF spaces of Clausen and Scholze, \cite[Section 8]{Scholze}, that are essentially the $\aleph_{1}$-compact generators of the category of nuclear liquid spaces over $\mathbb{C}$, are in particular regular LB-spaces. Their counterpart on the bornological side are in $\bLB$ and are similarly the $\aleph_{1}$-compact generators of the category of nuclear complete bornological spaces. Sitting further within the category of nuclear complete bornological spaces are \textit{very nuclear} complete bornological spaces \--- these are closely related to generators of the rigidification of the derived (infinity-)category of complete bornological spaces (see \cite{Scholze, kelly2025localising} for more details on the categorical terminology here).

\smallskip

For the sake of readability, we will below often not refer to original sources, but try to refer (a) to as few sources as possible and (b) to sources that use similar nomenclature. As the main feature of this note has been studied for the first time in \cite{BBBK18}, we will try to follow their notation where possible. Detailed references for categories of bornological spaces are \cite{W71, Houzel, HN, PS00, Meyer}, background information from functional analysis can be found in \cite{KO69, BPC, MV1997}. Historical comments and an extensive list of original sources are given in \cite{S25}.


\section{Bornological and locally convex topological vector spaces}\label{SEC-2}

Before we start with the technical details, we would like to warn the reader that the term `bornological space' is used for two different things in different parts of the literature; both definitions will be given below. Moreover, we like to point out that while many authors tacitly assume topological vector spaces to be Hausdorff, we will not do that.

\smallskip

Let $k\in\{\RR,\CC\}$ be fixed. We denote by $\Born$ the category of bornological $k$-vector spaces of convex type (short:\ bvs) \cite[Dfn 3.39]{BOBB}. By $\Tc$ we denote the category of, possibly non-Haus\-dorff, locally convex topological $k$-vector spaces (short:\ lcs) and by  $\TcH$ the full subcategory of lcs that are Haus\-dorff (short:\ lcHs) \cite[Dfn 2.1.1 and 3.1.2]{P00}. We write
\begin{equation}\label{FUN}
\begin{tikzcd}
(\,\cdot\,)^{\bor}\hspace{2pt}\colon \Tc \arrow[r, shift left=0.75ex]  &\arrow[l, shift right=-0.75ex] \Born\hspace{2pt}\colon (\,\cdot\,)^{\topp}
\end{tikzcd}
\end{equation}
for the natural functors: the \emph{bornologification} $(\,\cdot\,)^{\bor}$
furnishes $(E,\tau) \in \Tc$ with the von Neumann bornology consisting of all $\tau$-bounded sets, while the \emph{topologification} $(\,\cdot\,)^{\topp}$ endows $(E,\mathcal{B}) \in \Born$ with the topology that has all absolutely convex and bornivorous sets as a basis of $0$-nbhds \cite[p.\ 1881]{BBBK18}. These functors in fact form an adjunction, with $(\,\cdot\,)^{\bor}$ being right adjoint to $(\,\cdot\,)^{\topp}$ by \cite[Prop 2.1.10]{MR961256}.

\smallskip

We denote by $\NTc$, respectively $\NBorn$, the full subcategories of \emph{normal objects}, i.e., lcs such that $(E,\tau)^{\bor\hspace{-1pt}\topp}\cong(E,\tau)$, resp.~bvs such that $(E,\mathcal{B})^{\topp\hspace{-1pt}\bor}\cong(E,\mathcal{B})$ holds \cite[Dfn 3.18]{BBBK18}.

\smallskip
 
We denote by $\CBorn$ the full subcategory of $\Born$ formed by the separated and complete bvs \cite[Dfn 3.43]{BOBB}. Notice that \cite{BOBB} define objects of $\CBorn$ as filtered colimits of Banach spaces in $\Born$ while they originally have been defined as bvs in which every bounded set is contained in a Banach disk \cite[Dfn 5.1]{PS00}. Both definitions are equivalent, cf.~Fact \ref{Facts}(vii) below.

\smallskip

As we shall see, $\NTc$ and $\NBorn$ are equivalent to each other, with $(\cdot)^{\bor}$ and $(\cdot)^{\topp}$ restricting to quasi-inverses. By \cite[Thm 2.4.3]{MR961256}, they are both equivalent to the category $\Precon$ of \textit{preconvenient vector spaces} \cite[Dfn 2.4.2]{MR961256}. 

\smallskip

We say that a lcs $(E,\tau)$ is \emph{bornological}, if every absolutely convex and bornivorous subset of $(E,\tau)$ is a 0-nbhd \cite[\S\hspace{1pt}23, 1.5]{FW}. Moreover, we call a lcs $(E,\tau)$ \emph{ultrabornological}, if every absolutely convex subset of $(E,\tau)$ that absorbs any bounded Banach disk is a 0-nbhd \cite[Dfn 4.3(1)]{HN}. We denote by $\TcUbor$ the full subcategory of $\Tc$ consisting of the ultrabornological lcs.

\begin{fact}\label{Facts} For later use we note the following.\vspace{3pt}

\begin{compactitem}

\item[(i)] \cite[Obs 3.20]{BBBK18} $(\hspace{1pt}\cdot\hspace{1pt})^{\topp}$ commutes with colimits; $(\hspace{1pt}\cdot\hspace{1pt})^{\bor}$ commutes with limits.

\vspace{3pt}

\item[(ii)] \cite[Lem 4.1(1) and (2)]{HN} For any lcs $(E,\tau)$ we have $(E,\tau)^{\bor\hspace{-1pt}\topp\hspace{-1pt}\bor}=(E,\tau)^{\bor}$ and for any bvs $(E,\mathcal{B})$ we have $(E,\mathcal{B})^{\topp\hspace{-1pt}\bor\hspace{-1pt}\topp}=(E,\mathcal{B})^{\topp}$.

\vspace{3pt}

\item[(iii)] [Consequence of (ii)] A lcs $(E,\tau)$ is normal iff $(E,\tau)=(E,\mathcal{B})^{\topp}$ for some bvs $(E,\mathcal{B})$. In particular, $(E,\tau)$ is a bornological lcs iff it is normal.

\vspace{3pt}

\item[(iv)] [Consequence of (i) and (ii)] $\NTc$ is closed under $\Tc$-colimits.

\vspace{3pt}

\item[(v)] \cite[Ch 2, \S2, n$^{\circ}$6]{Houzel} The functors $(\,\cdot\,)^{\bor}\hspace{1pt}\colon \NTc\hspace{3pt}\raisebox{-1pt}{\scalebox{0.8}{$\stackrel{\textstyle\longrightarrow}{\longleftarrow}$}}\hspace{3pt}\NBorn\hspace{1pt}\colon (\,\cdot\,)^{\topp}$ are quasiinverses.

\vspace{3pt}

\item[(vi)] \cite[Prop 4.1(3)]{HN} or \cite[p.~109]{Houzel} Normed locally convex spaces are normal; for clarity of presentation, we will however always read $(E,\|\cdot\|)$ as a lcs and write $(E,\|\cdot\|)^{\bor}$ if we consider it as a bvs.

\vspace{3pt}

\item[(vii)] \cite[Cor 3.5 and Prop 5.15]{PS00} The category $\Born$ is equivalent to the full subcategory of $\ensuremath{\text{\bf\textsf{Ind(SNrm)}}}$ consisting of essentially monomorphic Ind-objects over semi-normed spaces ($\ensuremath{\text{\bf\textsf{SNrm}}}$); similarly, $\CBorn$ is equivalent to essentially monomorphic Ind-objects over Banach spaces, see p.~\pageref{PRES} for more details.


\vspace{3pt}

\item[(viii)] \cite[\S\hspace{1pt}23.5.1 + straightforward modifications]{FW} A lcs is bornological iff it is the filtered $\Tc$-colimit of semi-normed spaces. A lcs is ultrabornological iff it is the filtered $\Tc$-colimit of Banach spaces. Hausdorff versions of the aforementioned equivalences are well-known.\hfill\diam{}

\end{compactitem}
 
\end{fact}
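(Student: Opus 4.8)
Items (i)--(vii) are established in the cited references; the proposal below concerns (viii), where the plan is to prove both equivalences simultaneously, carrying out the bornological case in detail and indicating the changes for the ultrabornological one. Throughout I would use that a $\Tc$-colimit $E=\varinjlim_{i\in I}E_i$ carries the finest locally convex topology making all structure maps $\phi_i\colon E_i\to E$ continuous, so that an absolutely convex set $U\subseteq E$ is a $0$-nbhd precisely when each $\phi_i^{-1}(U)$ is a $0$-nbhd in $E_i$.

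For the implication from colimits to bornologicality, I would take a filtered colimit $E=\varinjlim_{i\in I}E_i$ of semi-normed spaces and an absolutely convex bornivorous $U\subseteq E$. Writing $B_i$ for the unit ball of $E_i$, the image $\phi_i(B_i)$ is bounded in $E$, hence absorbed by $U$; since then $\phi_i^{-1}(U)\supseteq\varepsilon B_i$ for a suitable $\varepsilon>0$, the set $\phi_i^{-1}(U)$ is a $0$-nbhd of $E_i$ for every $i$, so $U$ is a $0$-nbhd of $E$ and $E$ is bornological. For the ultrabornological version I would argue identically, replacing ``semi-normed'' by ``Banach'' and ``bornivorous'' by ``absorbs every bounded Banach disk'', and noting that $\phi_i(B_i)$ sits inside a bounded Banach disk of $E$ (e.g.\ the closed absolutely convex hull of the image of the closed unit ball of $E_i$), which is then absorbed by $U$.

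For the converse, assume $(E,\tau)$ is bornological. I would form the system of linear spans $E_B=\operatorname{span}(B)$ over all bounded disks $B\subseteq E$, each carrying its gauge seminorm $p_B$, directed by inclusion since $B_1+B_2$ is again a bounded disk; the inclusions $E_{B_1}\hookrightarrow E_{B_2}$ for $B_1\subseteq B_2$ are continuous, and each $\iota_B\colon E_B\to(E,\tau)$ is continuous because a bounded disk is absorbed by every $\tau$-$0$-nbhd. The canonical morphism $\varinjlim_B E_B\to(E,\tau)$ is bijective on underlying vector spaces, as every $x\in E$ lies in the bounded disk $\{\lambda x:|\lambda|\le1\}$; and the $\Tc$-colimit topology has as a $0$-nbhd basis the absolutely convex sets absorbing every bounded disk, i.e.\ the absolutely convex bornivorous sets, which by hypothesis are exactly the $\tau$-$0$-nbhds. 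Hence the canonical morphism is an isomorphism in $\Tc$. For the ultrabornological statement I would run the same argument over the family of bounded \emph{Banach} disks, using that $\{\lambda x:|\lambda|\le1\}$ is one-dimensional, hence a Banach disk, and that the relevant $0$-nbhd basis now consists of the absolutely convex sets absorbing every bounded Banach disk. The Hausdorff versions then follow, because when $(E,\tau)$ is Hausdorff each gauge $p_B$ is automatically a norm, so the steps $E_B$ are normed, respectively Banach.

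The step I expect to be the main obstacle is the directedness of the poset of bounded Banach disks in the ultrabornological case: one needs that the closed absolutely convex hull of finitely many bounded Banach disks is again a bounded Banach disk, which is the classical local-completeness input and, in the non-Hausdorff generality considered here, is presumably what the ``straightforward modifications'' in the cited reference refer to. A minor secondary point is the bookkeeping in the ``if'' direction of the ultrabornological case, where $\phi_i(B_i)$ must be replaced by a genuine bounded Banach disk of $E$.
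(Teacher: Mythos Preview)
Your outline is the standard textbook argument and matches what the cited reference \cite[\S\hspace{1pt}23.5.1]{FW} contains; the paper itself offers no proof of (viii) beyond that citation, so there is no alternative approach to compare against. Two adjustments to your self-assessment of the difficulties are nonetheless worth making.

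The step you flag as the ``main obstacle'' is in fact not one: the Minkowski sum of two Banach disks in an arbitrary $k$-vector space is again a Banach disk, unconditionally, with no completeness hypothesis on the ambient space. This is \cite[Lem 3.2.10]{W03}, which the paper itself invokes later in the proof of Theorem~\ref{t:RangeLB}. Hence the poset of bounded Banach disks is always filtered, and no ``local-completeness input'' is needed for the converse direction in the ultrabornological case.

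By contrast, your ``minor secondary point'' is the genuinely delicate one in the non-Hausdorff setting, and your proposed fix does not work: taking the closed absolutely convex hull of $\phi_i(B_i)$ yields a Banach disk only under a local-completeness assumption on $E$, which a general filtered $\Tc$-colimit of Banach spaces need not satisfy (cf.\ Example~\ref{MAK}). When $E$ is Hausdorff no repair is needed at all, since then $\ker\phi_i$ is closed in $E_i$ and $\phi_i(B_i)$ is itself already a bounded Banach disk of $E$. For the non-Hausdorff case one may, for instance, pass to the Hausdorff quotient $E/\overline{\{0\}}$ (where each $E_i/\phi_i^{-1}(\overline{\{0\}})$ is a genuine Banach step) and argue there; this is presumably part of what the paper's ``straightforward modifications'' refers to.
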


Later in Section \ref{SEC:4}, we will interpret several of the facts above purely abstractly in terms of idempotent adjunctions.

\smallskip

Colimits of normed spaces in $\Born$ can only arise as the bornologification of an object from $\Tc$, if the latter does belong to $\TcH$. This follows from the lemma below; we include its simple proof for the sake of completeness. 

\begin{lem}
    \label{l:NecessarilyHausdorff}
    Let $(E,\tau)\in\Tc$ be such that $(E, \tau)^{\bor} \cong \colim_{j \in J} (E_j, \tau_j)^{\bor}$ holds with $(E_j,\tau_j)\in\TcH$ for all $j\in J$. Then it follows $(E, \tau) \in \TcH$.
\end{lem}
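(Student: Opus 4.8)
The plan is to reduce the statement to showing that the bvs $(E,\tau)^{\bor}$ has no non-zero bounded vector subspace, and then to exploit that, being a bornologification, $(E,\tau)^{\bor}$ is normal. I would start from the general principle that, for any lcs $(F,\sigma)$, one has $(F,\sigma)\in\TcH$ if and only if $(F,\sigma)^{\bor}$ has no non-zero bounded vector subspace: indeed $\overline{\{0\}}^{\sigma}$ is the intersection of all absolutely convex $0$-neighbourhoods of $(F,\sigma)$, hence a vector subspace contained in every $0$-neighbourhood and therefore $\sigma$-bounded; conversely a $\sigma$-bounded vector subspace $V$ is trivial, because for $0\neq v\in V$ a continuous seminorm $p$ with $p(v)>0$ is unbounded on the line $kv\subseteq V$. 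Applied to each $(E_j,\tau_j)\in\TcH$ this shows that every bvs $(E_j,\tau_j)^{\bor}$ has no non-zero bounded vector subspace, so the task is to see that $(E,\tau)^{\bor}\cong\colim_{j}(E_j,\tau_j)^{\bor}$ inherits this. Unwinding the colimit bornology in $\Born$, a non-zero bounded line $kv$ in the colimit would be contained in $\Gamma\big(\phi_{j_1}(B_1)\cup\dots\cup\phi_{j_n}(B_n)\big)$ for finitely many $\tau_{j_i}$-bounded disks $B_i\subseteq E_{j_i}$ (with $\phi_{j_i}$ the colimit morphisms); since $\bigcap_{\varepsilon>0}\varepsilon B_i\subseteq\overline{\{0\}}^{\tau_{j_i}}=\{0\}$ by Hausdorffness of $(E_{j_i},\tau_{j_i})$, the Minkowski gauge of $B_i$ is a genuine \emph{norm} on $\spann(B_i)$, so assembling the finite coproduct produces a bounded linear map $\Psi\colon G^{\bor}\to(E,\tau)^{\bor}$ out of a normed space $G$ with $kv$ contained in the image under $\Psi$ of a bounded disk of $G$.

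The decisive step — and the one I expect to be the main obstacle — is to turn this into a contradiction, and here one genuinely uses that $(E,\tau)^{\bor}$ is a bornologification. By Fact \ref{Facts}(ii) the bvs $(E,\tau)^{\bor}$ is normal, hence so is the colimit $\colim_{j}(E_j,\tau_j)^{\bor}$; combining Fact \ref{Facts}(i) (so that $(\,\cdot\,)^{\topp}$ commutes with the colimit) with Fact \ref{Facts}(iv), this normality translates into the assertion that the locally convex inductive limit $X:=\colim_{j}(E_j,\tau_j)^{\bor\topp}$ is \emph{regular} (its bounded sets already come from the steps), together with $(E,\tau)^{\bor}=X^{\bor}$ and $(E,\tau)^{\bor\topp}=X$. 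One then invokes that a regular locally convex inductive limit of Hausdorff spaces has no non-zero bounded vector subspace: for Banach steps this is exactly the fact that a filtered colimit of Banach spaces in $\Born$, i.e.\ an object of $\CBorn$, is separated, and in general one argues along the same lines, using regularity to push the line $kv$ (equivalently, the map $\Psi$) back to a single step $(E_{j_0},\tau_{j_0})$ and the Hausdorffness of that step to force $kv=\{0\}$. Hence $\overline{\{0\}}^{X}=\{0\}$, and since $(E,\tau)^{\bor\topp}=X$ this yields $(E,\tau)\in\TcH$.

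Finally, I would note that the hypothesis cannot be dropped: for $\colim_{n}(\ell^{2}/k^{n})^{\bor}$ with the canonical quotient maps every step is separated, yet the colimit carries the bornology of all subsets — it is not normal, hence not a bornologification. I expect that the genuine work is in identifying normality of the bornological colimit with regularity of the associated locally convex inductive limit and in the regularity-implies-separated step; everything else amounts to bookkeeping with Facts \ref{Facts}(i), (ii) and (iv).
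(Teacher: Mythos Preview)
Your argument is ultimately correct (under the implicit assumption, also made in the paper, that the colimit is filtered with monomorphic linking maps), but it is enormously roundabout and somewhat obscures its own content. The paper's proof is three lines: the closure $D=\overline{\{0\}}^{\tau}$ is a $\tau$-bounded linear subspace, hence $D$ is bounded in $(E,\tau)^{\bor}\cong\colim_{j}(E_j,\tau_j)^{\bor}$; by the description of the colimit bornology, $D$ is contained and bounded in a single step $(E_{j_0},\tau_{j_0})$; a bounded linear subspace of a Hausdorff lcs is zero, so $D=\{0\}$.

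Your detour through normality and regularity is logically redundant. You invoke normality of $(E,\tau)^{\bor}$ to deduce that $X=\colim_{j}(E_j,\tau_j)^{\bor\topp}$ is ``regular'', and then use regularity to push the bounded line $kv$ back to a single step. But unpacking this, ``regularity of $X$'' just means $X^{\bor}=\colim_{j}(E_j,\tau_j)^{\bor}$, and ``pushing $kv$ back to a step'' is then nothing more than the description of bounded sets in the bornological colimit $\colim_{j}(E_j,\tau_j)^{\bor}$ \--- which is available from the outset by hypothesis, without any appeal to $(E,\tau)$, normality, or the auxiliary space $X$. In other words, the very fact you labour to establish via normality is already the definition of the colimit bornology. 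The construction of the map $\Psi$ out of a finite coproduct of normed spaces is likewise unnecessary once one works with the filtered colimit directly. Your opening reduction (Hausdorff $\Leftrightarrow$ no non-zero bounded linear subspace) is correct and is exactly the idea the paper uses; you should simply apply it immediately rather than routing through $X$.
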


\begin{proof}
    Let $D=\overline{\{0\}}^{(E,\tau)}$.
    Then $D \in (E, \tau)^{\bor}$ is bounded and thus for some $j_0 \in J$, the set $D$ is contained and bounded in $(E_{j_0}, \tau_{j_0})$.
    However, $D$ would then be a linear and bounded subspace of a lcHs, which is only possible if $D = \{0\}$.
    Therefore, $(E, \tau)$ is necessarily Hausdorff.
\end{proof}

\smallskip

\section{LB-spaces}\label{SEC-3}

In functional analysis, the subclass of ultrabornological lcs admitting a representation as \emph{countable} colimits of Banach spaces is of particular interest. We follow the notation of  \cite[Dfn 3.24]{BBBK18} and call a lcs $(E,\tau)$ an \emph{LB-space}, if
$$
(E,\tau)\cong\colim_{n\in\NN} (E_n,\|\cdot\|_n)
$$
holds in $\Tc$ with Banach spaces $(E_1,\|\cdot\|_1)\hookrightarrow(E_2,\|\cdot\|_2)\hookrightarrow \cdots$ such that the linking maps are injective. In what follows, we will always understand the symbol `$\hookrightarrow$' between lcs standing for a linear, continuous and injective map. If we write $(E,\tau)=\colim_{n\in\NN}(E_n,\|\cdot\|_n)\in\LB$ and nothing else is said, we will tacitly assume that the $(E_n,\|\cdot\|_n)$ are a defining sequence of Banach spaces as above. We point out that most of the functional analysis literature requires additionally that $(E,\tau)$ is Hausdorff. That this is not automatic follows from \cite[Prop 2 on p.~177]{M63}, see also Example \ref{MAK} below. A more illustrative example, with only normed spaces as steps though, is due to Waelbroeck \cite[p.~45]{W71} and given by
$$
E_n=\bigl\{f\in\RR[X]\:|\:f(0)=0\bigr\},\;\;\|f\|_n=\hspace{-3pt}\sup_{x\in[0,\frac{1}{n}]}|f(x)|,\,\text{ and }\,\id\colon E_n\hookrightarrow E_{n+1},
$$
which endows $\colim_{n\in\NN}(E_n,\|\cdot\|_n)$ with the trivial topology, see \cite[p.~207--208]{F79} for a proof. 

\smallskip

We denote by $\LB$ the full subcategory of $\Tc$ consisting of all LB-spaces.
If we only consider Hausdorff LB-spaces, we write $\LBH$. Since $\NTc$ is closed under colimits by Fact \ref{Facts}(iv), it follows that $\LB$ is a subcategory of $\NTc$. In particular, we have
\begin{equation}\label{LB-eq}
\forall\:(E, \tau) \in \LB\colon (E, \tau)^{\bor\hspace{-1pt}\topp} = (E, \tau).
\end{equation}


We now review two important facts about objects in $\LBH$. These follow directly from Grothendieck's factorization theorem \cite[Ch I, Cor 1 on p.\ 16]{G55}:

\begin{lem}\label{l:GrothendieckFactorization}\cite[1.4(e) and (f)]{F79} Let $(E, \tau) \cong \colim_{n \in \NN} (E_n, \|\cdot\|_{E_n}) \in \LBH$.\vspace{4pt}

\begin{compactitem}
\item[\emph{(i)}] Suppose that $(E, \tau) \cong \colim_{n \in \NN} (F_n, \|\cdot\|_{F_n})$ holds with possibly different Banach spaces $(F_n,\|\cdot\|_{F_n})$. Then the two defining sequences are \emph{equivalent} meaning that for every $n\in\NN$ there exists $m\in\NN$ with $m\geqslant n$ and maps
$$
(E_n, \|\cdot\|_{E_n}) \hookrightarrow (F_m, \|\cdot\|_{F_m})\;\;\text{and}\;\; (F_n, \|\cdot\|_{F_n}) \hookrightarrow (E_m, \|\cdot\|_{E_m}).
$$

\item[\emph{(ii)}] Let $(X, \|\cdot\|_X)$ be a Banach space such that $(X, \|\cdot\|_X) \hookrightarrow (E, \tau)$ holds. Then, there exists $n \in \NN$ such that already $(X, \|\cdot\|_X) \hookrightarrow (E_n, \|\cdot\|_n)$ holds.\hfill\diam{}
\end{compactitem}
\end{lem}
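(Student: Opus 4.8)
The plan is to read both statements off from Grothendieck's factorization theorem \cite[Ch I, Cor 1 on p.~16]{G55}, which in the present situation says: if $G$ is a barrelled locally convex space and $u\colon G\to H$ is a continuous linear map into a Hausdorff LB-space $H\cong\colim_{m\in\NN}(H_m,\|\cdot\|_{H_m})$, then there are an index $m$ and a continuous linear map $\widetilde u\colon G\to(H_m,\|\cdot\|_{H_m})$ whose composition with the canonical embedding $(H_m,\|\cdot\|_{H_m})\hookrightarrow H$ equals $u$, and moreover $\widetilde u$ is injective whenever $u$ is. The only additional input required is the elementary fact that Banach spaces are Baire, hence barrelled, so that every step $E_n$ and the space $X$ occurring below are admissible sources $G$ for the theorem.

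For (ii) I would apply this to $G=(X,\|\cdot\|_X)$, to $u$ the given embedding $(X,\|\cdot\|_X)\hookrightarrow(E,\tau)$, and to $H=(E,\tau)\cong\colim_n(E_n,\|\cdot\|_{E_n})$; it yields an $n\in\NN$ together with an injective continuous linear map $(X,\|\cdot\|_X)\hookrightarrow(E_n,\|\cdot\|_{E_n})$, which is the claim. For (i) I would fix $n$ and apply the theorem first with $G=(E_n,\|\cdot\|_{E_n})$ and the canonical embedding into $(E,\tau)\cong\colim_m(F_m,\|\cdot\|_{F_m})$, obtaining $(E_n,\|\cdot\|_{E_n})\hookrightarrow(F_{m_1},\|\cdot\|_{F_{m_1}})$ for some $m_1$, and then symmetrically, via the representation of $(E,\tau)$ by the $(E_m)$, obtaining $(F_n,\|\cdot\|_{F_n})\hookrightarrow(E_{m_2},\|\cdot\|_{E_{m_2}})$ for some $m_2$. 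Post-composing with the injective continuous linking maps of the two defining sequences then lets me replace $m_1$ and $m_2$ by the single index $m:=\max\{n,m_1,m_2\}\geqslant n$, so that both embeddings hold with this common $m$, which is exactly the definition of equivalent defining sequences.

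The one genuinely non-formal ingredient is the factorization theorem itself; the remainder is routine bookkeeping with the increasing sequences $(E_n)$ and $(F_n)$. What the theorem actually delivers, and what I would highlight in the writeup, is the \emph{automatic} upgrade of the factoring map to norm-continuity on one of the steps: a continuous linear map from a Banach space into $(E,\tau)$ whose range happens to lie inside $E_n$ need not be continuous for the Banach norm $\|\cdot\|_{E_n}$, as the topology $\tau$ induces on $E_n$ may be strictly coarser. Hausdorffness of $(E,\tau)$ is the standing hypothesis of \cite{F79} under which this is stated and is inherited here from $(E,\tau)\in\LBH$.
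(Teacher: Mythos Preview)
Your proposal is correct and follows exactly the approach the paper indicates: the paper does not supply a proof of this lemma but states that both parts ``follow directly from Grothendieck's factorization theorem \cite[Ch I, Cor 1 on p.\ 16]{G55}'' and refers to \cite[1.4(e) and (f)]{F79}. Your write-up simply spells out this derivation in detail, including the observation that Banach spaces are barrelled and the bookkeeping step of passing to a common index $m$, so there is nothing to add.
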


We follow the notation of \cite[Dfn 3.25]{BBBK18} further and say that an LB-space $(E,\tau)\cong\colim_{n\in\NN}(E_n,\|\cdot\|_n)$ is \emph{regular}, if $(E,\tau)^{\bor}\cong\colim_{n\in\NN} (E_n,\|\cdot\|_{E_n})^{\bor}$ holds for some, or equivalently, every defining sequence of Banach spaces. In view of Lemmas \ref{l:NecessarilyHausdorff} and \ref{l:GrothendieckFactorization}(i), we see that this notion is well-defined and that a regular LB-space is automatically Hausdorff. We write $\LBr$ for the full subcategory of $\TcH$ consisting of all regular LB-spaces; $\LBr$ is strictly smaller than $\LBH$, e.g., by the classic K\"othe-Grothendieck example \cite[31.6, p.~434]{KO69}, see also Example \ref{MAK} below.

\smallskip

There are two different definitions of what a \emph{bornological LB-space} in $\Born$ should be. To avoid confusion, we introduce the following notation.

\begin{dfn}\label{DFN-LB} Let $(E,\mathcal{B})$ be a bvs.\vspace{3pt}

\begin{compactitem}

\item[(i)] \cite[Dfn 3.26]{BBBK18} $(E,\mathcal{B})$ is a \emph{bLB-space}, if $(E,\mathcal{B})\cong(E,\tau)^{\bor}$ holds with some regular LB-space $(E,\tau)$.

\vspace{4pt}

\item[(ii)] \cite[Dfn 4.3.4]{S25} $(E,\mathcal{B})$ is an \emph{LBb-space}, if $(E,\mathcal{B})\cong\colim_{n\in\NN}(E_n,\|\cdot\|_n)^{\bor}$ holds with a sequence $(E_1,\|\cdot\|_1)\hookrightarrow(E_2,\|\cdot\|_2)\hookrightarrow\cdots$ of Banach spaces.\vspace{4pt}

\end{compactitem}

\noindent{}We denote the corresponding full subcategories of $\Born$ by $\bLB$, resp.~$\LBb$.\hfill\diam{}
\end{dfn}

The motivation behind Definition \ref{DFN-LB}(i) is that $(E,\mathcal{B})^{\topp\hspace{-1pt}\bor}=(E,\mathcal{B})$ holds for any bLB-space, compare with \eqref{LB-eq}.
This follows from the independence of regularity for LB-spaces from their defining sequences. Thus, we have $\bLB\subseteq\NBorn$. Our aim now is to compare with Definition \ref{DFN-LB}(ii).

\smallskip

We start by noting that $\bLB\subseteq\LBb$ holds and that we get the following diagram\begin{equation}\label{DIAG}
\begin{tikzcd}
 \Tc \arrow[r, shift left=0.75ex, "(\,\cdot\,)^{\bor}"]  &\arrow[l, shift right=-0.75ex, "(\,\cdot\,)^{\topp}"] \Born\\[4pt]
\TcUbor\arrow[left hook->]{u}&\arrow[two heads]{l}\CBorn\arrow[hookrightarrow]{u}\\[4pt]
\LB\arrow[left hook->]{u}&\arrow[two heads]{l}{}\LBb\arrow[hookrightarrow]{u}\\[4pt]
\LBr\arrow[left hook->]{u}\arrow[r, shift left=0.75ex, "\sim"] &\arrow[l, shift right=-0.75ex, "\sim"] \arrow[hookrightarrow]{u}\bLB.\hspace{-4pt}
\end{tikzcd}
\end{equation}
In particular, the range of $(\hspace{1pt}\cdot\hspace{1pt})^{\topp}\colon\LBb\rightarrow\Tc$ is the \emph{whole} category $\LB$ but we observe the following odd effect: if $(E,\tau)=\colim_{n\in\NN}(E_n,\|\cdot\|_n)$ is a non-regular LB-space, then we get via $(E,\mathcal{B}):=\colim_{n\in\NN}(E_n,\|\cdot\|_n)^{\bor}$ a preimage of $(E,\tau)$ under $(\hspace{1pt}\cdot\hspace{1pt})^{\topp}$ in $\LBb$. The bornology $\mathcal{B}$ contains however only sets which are bounded in some $(E_n,\|\cdot\|_n)$, see e.g.~\cite[Prop 2.1]{PS00}, while those sets that caused $(E,\tau)$ to be non-regular, have been declared non-bounded.

\smallskip

The deeper reason for the above is that $(\hspace{1pt}\cdot\hspace{1pt})^{\bor}$ does not commute with colimits. Indeed, the next example illustrates that one can say almost nothing about the range of $(\hspace{1pt}\cdot\hspace{1pt})^{\bor}\colon\LB\rightarrow\Born$ in general; a restriction to Hausdorff LB-spaces will, however, change the picture entirely, see Theorem \ref{t:RangeLB}.

%
%
%
%
%



\begin{ex}\label{MAK} There exist $(E,\tau)\in\LB$ such that in $(E,\tau)^{\bor}$ every subset of $E$ is bounded. 
\end{ex}
\begin{proof} To prove the existence we apply \cite[Prop 2 on p.\ 177]{M63}. For the sake of completeness, let us include a concrete example and repeat Makarov's argument.

\smallskip

Makarov's construction requires an LB-space $(X, \mu) = \colim_{n \in \NN} (X_n, \|\cdot\|_n)$, such that there exist a sequence $(x_n)_{n \in \NN}\subseteq X_1$, which is dense in $(X, \mu)$, and a sequence $(y_n)_{n \in \NN}\subseteq X$, which is $\mu$-convergent to zero and satisfies $y_n \in X_n$ for $n \in \NN$, but $y_n \notin X_{n-1}$ if $n \geqslant 2$.

\smallskip

Such spaces indeed exist. A concrete example is the classic K\"othe-Grothendieck coechelon space $\lambda^0(A)$ with K\"othe matrix $A$ defined as follows \cite[31.6, p.~434]{KO69}: For $i,j,n\in\NN$ let $a^{\scriptscriptstyle(n)}_{i,j} = j$ for $i \leqslant n$ and $a^{\scriptscriptstyle(n)}_{i,j} = 1$ otherwise. Consider the Banach spaces
\[
X_n = \bigl\{ c = (c_{i, j})_{i, j \in \NN} \in k^{\NN \times \NN}\;\big|\lim_{i, j \to \infty}(a^{\scriptscriptstyle(n)}_{i, j})^{-1}|c_{i, j}| = 0 \bigr\}\;\text{ with }\;\|\hspace{0.2pt}c\hspace{0.8pt}\|_{n} = \sup_{i, j \in \NN}(a^{\scriptscriptstyle(n)}_{i, j})^{-1}|c_{i, j}|
\]
and put $(X,\mu)=\colim_{n \in \NN}(X_n,\|\cdot\|_n)$. For $(x_n)_{n \in \NN}$, we pick some enumeration of all sequences $(c_{i,j})_{i, j \in \NN}$ where only finitely many of the $c_{i,j}$'s are non-zero with real and imaginary part both being rational. Then we define
\[ 
y_n = (y_{n,i,j})_{i, j \in \NN},\;\;y_{n,i,j} = \begin{cases} 1/n, & i\leqslant n,\\ \hspace{6pt}0, & \text{otherwise.} \end{cases}
\]

What follows is now Makarov's construction applied to $(X, \mu)$, $(x_n)_{n\in\NN}$ and $(y_n)_{n\in\NN}$: put $z_n = x_n + y_n$ for $n \in \NN$ and let $L = \spann \{ z_n \: | \: n \in \NN \}$. Then the sequence $(z_n)_{n \in \NN}$ is dense in $(X, \mu)$, hence so is $L$. For $n\in\NN$ put
\[
L_n = L \cap X_n = \spann \{ z_1, \ldots, z_n \}
\]
and let $(E, \tau)$ be $X/L$ endowed with the locally convex quotient topology. Each $L_n$ is closed in $(X_n, \|\cdot\|_n)$, which implies
\[
(E, \tau) \cong \colim_{n \in \NN} (X_n / L_n, \|\cdot\|_{X_n / L_n})
\]
and therefore $(E,\tau)\in\LB$. Now, as $L$ is dense in $(X, \tau)$, we have $\overline{\{0\}}^{(E, \tau)}=E=X/L$. Hence, $\tau=\{\emptyset,E\}$ which means that the bornology of $(E,\tau)^{\bor}$ contains every subset of $E$.
\end{proof}

As announced earlier, we restrict our attention now to Hausdorff LB-spaces. Here, we will be able to characterise precisely when an object of $\NBorn$ is the bornologification of some object of $\LBH$.
In fact, these spaces will turn out to be special colimits in $\Born$ of normed spaces.
Therefore, our first result considers exactly when such colimits belong to $\NBorn$.
To do so, we need the following notation.

\smallskip
Let $E$ be a $k$-vector space.
For a non-empty $B \subseteq E$, we write $E_B = \spann B$. We may endow $E_B$ with the Minkowski functional $\|e\|_B := \inf\{ \lambda > 0 \:|\: e \in \lambda B \}$, turning it into a seminormed space. If $(E_B, \|\cdot\|_B)$ is a normed space, we call $B$ a \emph{norming disk} in $E$. Notice that $B$ for sure is norming, if $B$ is absolutely convex and $B \in (E, \tau)^{\bor}$ holds for some $(E, \tau) \in \TcH$. If $(E_B, \|\cdot\|_B)$ is even a Banach space, then we call $B$ a \emph{Banach disk} in $E$.

\begin{thm}\label{thm-char}
    Let $(E, \mathcal{B}) = \colim_{n \in \NN} (E_{B_n}, \|\cdot\|_{B_n})^{\bor} \in \Born$ for a sequence of norming disks $(B_n)_{n \in \NN}$ in $E$.
    Then, $(E, \mathcal{B}) \in \NBorn$ if and only if the following two conditions are met for $(B_n)_{n \in \NN}$:
        \begin{itemize}
        \item[(I)] $\forall\:e\in E\;\exists\:e'\colon E\rightarrow k \text{ linear}\colon e'(e) \neq 0 \:\text{ and }
        \sup_{b \in B_n} |e'(b)| < \infty$ for all $n\in\NN$;
            \vspace{3pt}
            \item[(II)] $\forall\:n\in\NN\;\exists\:m\geqslant n\colon \overline{B_n}^{\scriptscriptstyle(E, \mathcal{B})^{\topp}}$ is bounded in $(E_{B_m}, \|\cdot\|_{B_m})^{\bor}$.
        \end{itemize}
\end{thm}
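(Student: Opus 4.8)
The plan is to characterise membership in $\NBorn$ via the concrete description of $(E,\mathcal{B})^{\topp\hspace{-1pt}\bor}$ and to unwind what it means for this to agree with $(E,\mathcal{B})$. Recall that $(E,\mathcal{B})\in\NBorn$ iff $(E,\mathcal{B})^{\topp\hspace{-1pt}\bor}\cong(E,\mathcal{B})$, and since both bornologies live on the same underlying space $E$, the isomorphism is forced to be the identity; so the content is the equality of the two bornologies. One inclusion is automatic: since $(E,\mathcal{B})^{\topp\hspace{-1pt}\bor\hspace{-1pt}\topp}=(E,\mathcal{B})^{\topp}$ by Fact \ref{Facts}(ii), the bornology $\mathcal{B}$ is always contained in the von Neumann bornology of $(E,\mathcal{B})^{\topp}$, i.e.\ $\mathcal{B}\subseteq\mathcal{B}^{\topp\hspace{-1pt}\bor}$; hence normality is equivalent to the reverse inclusion: every $(E,\mathcal{B})^{\topp}$-bounded set already lies in $\mathcal{B}$. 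The first key step is to make $\mathcal{B}$ explicit: since the colimit in $\Born$ of seminormed (here normed) spaces has as its bornology exactly the sets absorbed by some $B_m$ together with the requirement of lying in finitely many steps, one shows (using \cite[Prop 2.1]{PS00} or the Ind-object description of Fact \ref{Facts}(vii)) that $\mathcal{B}=\{A\subseteq E\mid \exists\,m\colon A \text{ bounded in }(E_{B_m},\|\cdot\|_{B_m})^{\bor}\}$, equivalently $A\subseteq\lambda B_m$ for some $m$ and some $\lambda>0$. Thus normality becomes: for every absolutely convex bornivorous-free $(E,\mathcal{B})^{\topp}$-bounded set $A$ there is $m$ with $A\subseteq\lambda B_m$.

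Next I would address condition (I). The subtlety is that the colimit in $\Born$ need not be separated — the underlying space $E$ is the set-theoretic union of the $E_{B_n}$, but the colimit topology $(E,\mathcal{B})^{\topp}$ may fail to be Hausdorff, and if it does fail then $\overline{\{0\}}^{(E,\mathcal{B})^{\topp}}$ is a nonzero linear subspace that is $(E,\mathcal{B})^{\topp}$-bounded; for $(E,\mathcal{B})$ to be normal this subspace would have to lie in some $E_{B_m}$ and be $\|\cdot\|_{B_m}$-bounded, which is impossible for a nonzero subspace of a normed space. So normality forces $(E,\mathcal{B})^{\topp}$ to be Hausdorff. Now I claim Hausdorffness of $(E,\mathcal{B})^{\topp}$ is exactly condition (I): the topology $(E,\mathcal{B})^{\topp}$ has as a $0$-neighbourhood basis the absolutely convex bornivorous sets, so it is Hausdorff iff the continuous linear functionals separate points, and a linear functional $e'$ is continuous on $(E,\mathcal{B})^{\topp}$ iff it is bounded on every element of $\mathcal{B}$, iff (by the explicit description of $\mathcal{B}$) it is bounded on each $B_n$, i.e.\ $\sup_{b\in B_n}|e'(b)|<\infty$ for all $n$. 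Hence (I) $\iff$ $(E,\mathcal{B})^{\topp}\in\TcH$.

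Finally I would establish that, granting (I), normality is equivalent to (II). Given (I), $(E,\mathcal{B})^{\topp}$ is a Hausdorff lcs, so for each $n$ the set $B_n$, being absolutely convex and bounded in $(E,\mathcal{B})^{\topp}$ (since $B_n\in\mathcal{B}$), has a well-defined closure $\overline{B_n}^{(E,\mathcal{B})^{\topp}}$, which is again absolutely convex and $(E,\mathcal{B})^{\topp}$-bounded, hence a norming disk. For the direction ``normal $\Rightarrow$ (II)'': $\overline{B_n}^{(E,\mathcal{B})^{\topp}}$ is $(E,\mathcal{B})^{\topp}$-bounded, so by normality it lies in $\mathcal{B}$, i.e.\ it is bounded in some $(E_{B_m},\|\cdot\|_{B_m})^{\bor}$; one chooses $m\geqslant n$ (enlarging if necessary, which is harmless as the $B_n$ need not be nested but one can pass to a cofinal rearrangement, or simply note that the colimit is unchanged under reindexing). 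For the converse ``(II) $\Rightarrow$ normal'': take an arbitrary $A\in\mathcal{B}^{\topp\hspace{-1pt}\bor}$, i.e.\ $A$ is $(E,\mathcal{B})^{\topp}$-bounded; one may assume $A$ absolutely convex by passing to its absolutely convex hull, which is still $(E,\mathcal{B})^{\topp}$-bounded. The point is to show that a bounded absolutely convex set in the colimit topology is absorbed by some $B_n$ — this is where one uses that the steps are \emph{countably many} and an exhaustion/Baire-type argument (essentially the mechanism behind Grothendieck's factorisation theorem, Lemma \ref{l:GrothendieckFactorization}, transported to the bornological colimit via the identification $(E,\mathcal{B})^{\topp}=(\colim E_{B_n})^{\topp}$, which by Fact \ref{Facts}(i) is the $\Tc$-colimit of the $(E_{B_n},\|\cdot\|_{B_n})$). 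Thus $A\subseteq\lambda B_n$ for some $n$; then $A\subseteq\lambda\,\overline{B_n}^{(E,\mathcal{B})^{\topp}}$, and (II) gives $m\geqslant n$ with $\overline{B_n}^{(E,\mathcal{B})^{\topp}}$ bounded in $(E_{B_m},\|\cdot\|_{B_m})^{\bor}$, whence $A\in\mathcal{B}$. This closes the reverse inclusion and hence proves normality.

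The main obstacle I anticipate is the step ``bounded absolutely convex set in the colimit is absorbed by some step'': in the bornological colimit of countably many normed spaces this requires care, because the colimit need not be regular as a bornological space in the naive sense, and one has to argue either via the topologification being an $\LB$-space (so Grothendieck factorisation applies to the absolutely convex bounded — hence Banach-disk-generating — set, at least after passing to closures) or directly via a category-of-Ind-objects argument from Fact \ref{Facts}(vii). Getting the interplay between ``bounded in the bornological colimit'' and ``bounded in the topological $\LB$-colimit'' exactly right, and in particular the role of taking closures in condition (II) (needed precisely because $B_n$ itself may not be closed, so $A\subseteq\lambda B_n$ only yields $A\subseteq\lambda\overline{B_n}$), is the delicate bookkeeping at the heart of the proof.
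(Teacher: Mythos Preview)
Your overall strategy matches the paper's, and the forward direction ``normal $\Rightarrow$ (I) and (II)'' is fine. The gap is in your argument for ``(I)+(II) $\Rightarrow$ normal''. You claim that a $(E,\mathcal{B})^{\topp}$-bounded set $A$ satisfies $A\subseteq\lambda B_n$ for some $n$, invoking a Grothendieck-factorisation/Baire mechanism. This is false in general: the steps $(E_{B_n},\|\cdot\|_{B_n})$ are merely normed, so $(E,\mathcal{B})^{\topp}$ is not an LB-space, and even for genuine LB-spaces bounded sets need not lie in a step (this is precisely non-regularity, cf.\ the K\"othe--Grothendieck example). Worse, if your claim held then $A\in\mathcal{B}$ would follow immediately without using (II) at all, so (II) would be vacuous---a clear sign that the argument has gone wrong. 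Your final paragraph shows you sense something is off, but you misdiagnose the role of closures: it is not that ``$A\subseteq\lambda B_n$ only yields $A\subseteq\lambda\overline{B_n}$'' (that implication is trivial); it is that $A\subseteq\lambda B_n$ is simply unavailable.

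The correct input, which the paper cites as \cite[Prop 25.16]{MV1997}, is that in a Hausdorff countable inductive limit of normed spaces every bounded set is absorbed by the \emph{closure} $\overline{B_n}^{(E,\mathcal{B})^{\topp}}$ of some step ball; equivalently, $(E,\mathcal{B})^{\topp\hspace{-1pt}\bor}=\colim_{n}(E_{\widetilde{B}_n},\|\cdot\|_{\widetilde{B}_n})^{\bor}$ with $\widetilde{B}_n=\overline{B_n}^{(E,\mathcal{B})^{\topp}}$. Condition (II) is then exactly what is needed to embed this colimit back into $(E,\mathcal{B})$. Once you replace your Grothendieck-factorisation step by this fact, your argument becomes the paper's.
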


\begin{proof}
    \textquotedblleft{}$\Longrightarrow$\textquotedblright{} Suppose $(E, \mathcal{B})$ is in $\NBorn$.
    Put $(E, \tau) = (E, \mathcal{B})^{\topp} \cong \colim_{n \in \NN} (E_{B_n}, \|\cdot\|_{B_n})$, then $(E, \tau)^{\bor} = (E, \mathcal{B})$.
    By Lemma \ref{l:NecessarilyHausdorff}, $(E, \tau)$ is Hausdorff.
    Using the Hahn-Banach theorem, for every $e \in E$ there exists a continuous linear map $e'\colon(E, \tau) \to (k, |\cdot|)$ such that $e'(e) \neq 0$.
    Fix $n\in\NN$. Then $e'$, restricted to each step $(E_{B_n}, \|\cdot\|_{B_n})$, is a bounded linear map and hence $\sup_{b \in B_n} |e'(b)| < \infty$ holds.
    Therefore, (I) is true.
    Now, each $B_n$ is bounded in $(E, \tau)$, hence its closure in $(E,\tau)=(E, \mathcal{B})^{\topp}$ is bounded, too. 
    Then (II) becomes clear.

    \vspace{3pt}

    \textquotedblleft{}$\Longleftarrow$\textquotedblright{}Assume now that (I) and (II) hold for $(E, \mathcal{B})$.
    By (I), we see that $\{0\}$ is closed in $(E, \tau) := (E, \mathcal{B})^{\topp} = \colim_{n \in \NN} (E_{B_n}, \|\cdot\|_{B_n})$, hence $(E, \tau) \in \TcH$.
    Put now $\widetilde{B}_n = \overline{B_n}^{\scriptscriptstyle(E, \tau)}$ for $n \in \NN$.
    It follows from \cite[Prop 25.16]{MV1997} that
        \[ (E, \tau)^{\bor} = \colim_{n \in \NN} (E_{\widetilde{B}_n}, \|\cdot\|_{\widetilde{B}_n})^{\bor} \]
    holds. By (II), we then have $(E, \tau)^{\bor} \subseteq (E, \mathcal{B})$.
    Therefore, in fact, $(E, \mathcal{B})^{\topp\hspace{-1pt}\bor} = (E, \tau)^{\bor} = (E, \mathcal{B})$ which implies $(E, \mathcal{B}) \in \NBorn$.
\end{proof}

We move on to our bornological characterisation of $\LBH$. 
To achieve this, we need to introduce the notion of pre-Banach disks.

\smallskip

Let $(E, \mathcal{B}) \in \Born$ be given. We write $\widehat{\mathcal{B}}(E)$ for the set of all Banach disks in $E$ contained in $(E, \mathcal{B})$ and define
\[
(E, \mathcal{B})^{\ub} := \Bigl(\hspace{1.5pt}\colim_{B \in \widehat{\mathcal{B}}(E)} (E_B, \|\cdot\|_B)^{\bor} \Bigr)^{\hspace{-1pt}\topp} = \colim_{B \in \widehat{\mathcal{B}}(E)} (E_B, \|\cdot\|_B).
\]
Then $(E, \mathcal{B})^{\ub} \in \TcUbor$ and we call it the \emph{ultrabornologification of $(E, \mathcal{B})$}.

\begin{dfn} Let $(E, \mathcal{B}) \in \Born$. A norming disk $B \in \mathcal{B}$ is called a \emph{pre-Banach disk} in $(E, \mathcal{B})$, if there exists a Banach disk $\widehat{B} \subseteq B$ in $(E, \mathcal{B})$ such that $B$ is contained in the closure of $\widehat{B}$ in $(E, \mathcal{B})^{\ub}$.
\end{dfn}

The notion of a pre-Banach disk depends on the bornology we are working with. However, it remains unchanged under monomorphisms in $\Born$. In fact, the ensuing argument works for any morphism $\varphi\colon(E, \mathcal{B}) \to (F, \mathcal{C})$ that has a \emph{locally closed kernel}, i.e., when $\ker \varphi_{\mid E_{B}}$ is closed in $(E_B, \|\cdot\|_B)$ for each $B \in \widehat{B}(E)$. In particular, the latter holds when $\ker \varphi$ is finite dimensional or when $(F, \mathcal{C}) \in \CBorn$.


\begin{lem}\label{l:PreBanachDiskInclusion} Let $(E, \mathcal{B}),(F, \mathcal{C}) \in \Born$ be such that $\iota : (E, \mathcal{B})\to (F, \mathcal{C})$ is a monomorphism. If $B$ is a pre-Banach disk in $(E, \mathcal{B})$, then $\iota(B)$ is also a pre-Banach disk in $(F, \mathcal{C})$.
\end{lem}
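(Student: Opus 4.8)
The plan is to chase the definition of pre-Banach disk through the monomorphism $\iota$. Let $B$ be a pre-Banach disk in $(E,\mathcal{B})$, so there is a Banach disk $\widehat{B}\subseteq B$ in $(E,\mathcal{B})$ with $B$ contained in the closure of $\widehat{B}$ in $(E,\mathcal{B})^{\ub}$. The first step is to observe that $\iota(\widehat{B})$ is a Banach disk in $(F,\mathcal{C})$ contained in $\mathcal{C}$: since $\iota$ is a monomorphism in $\Born$, it is injective (indeed $\Born$ is concrete and monos are injective, or one uses that $\iota$ has trivial kernel, which is in particular locally closed), so $\iota$ restricts to a linear isomorphism $E_{\widehat{B}}\to F_{\iota(\widehat{B})}$ that is isometric for the respective Minkowski functionals $\|\cdot\|_{\widehat{B}}$ and $\|\cdot\|_{\iota(\widehat{B})}$; hence $(F_{\iota(\widehat{B})},\|\cdot\|_{\iota(\widehat{B})})$ is again a Banach space, and $\iota(\widehat{B})\in\mathcal{C}$ because $\iota$ is bounded. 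The same reasoning shows $\iota(B)$ is a norming disk in $(F,\mathcal{C})$ lying in $\mathcal{C}$, so $\iota(B)$ is a legitimate candidate for a pre-Banach disk, with designated Banach disk $\iota(\widehat{B})$.

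It remains to transfer the closure condition: one must show that $\iota(B)$ lies in the closure of $\iota(\widehat{B})$ in $(F,\mathcal{C})^{\ub}$. The key step here is to produce a continuous linear map $(E,\mathcal{B})^{\ub}\to (F,\mathcal{C})^{\ub}$ through which $\iota$ factors, and then use continuity to push the closure forward: $\iota\bigl(\overline{\widehat{B}}^{(E,\mathcal{B})^{\ub}}\bigr)\subseteq\overline{\iota(\widehat{B})}^{(F,\mathcal{C})^{\ub}}$, whence $\iota(B)\subseteq\iota\bigl(\overline{\widehat{B}}^{(E,\mathcal{B})^{\ub}}\bigr)\subseteq\overline{\iota(\widehat{B})}^{(F,\mathcal{C})^{\ub}}$. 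To get that map, recall $(E,\mathcal{B})^{\ub}=\bigl(\colim_{A\in\widehat{\mathcal{B}}(E)}(E_A,\|\cdot\|_A)^{\bor}\bigr)^{\topp}$ and similarly for $F$. For each Banach disk $A$ in $(E,\mathcal{B})$, the image $\iota(A)$ is a Banach disk in $(F,\mathcal{C})$ by the argument of the previous paragraph, and $\iota$ restricts to a bounded map $(E_A,\|\cdot\|_A)^{\bor}\to (F_{\iota(A)},\|\cdot\|_{\iota(A)})^{\bor}$; assembling these over the directed index set gives a bounded linear map $\colim_{A\in\widehat{\mathcal{B}}(E)}(E_A,\|\cdot\|_A)^{\bor}\to\colim_{C\in\widehat{\mathcal{C}}(F)}(F_C,\|\cdot\|_C)^{\bor}$ (using that $\widehat{\mathcal{B}}(E)\to\widehat{\mathcal{C}}(F)$, $A\mapsto\iota(A)$, is monotone and cofinal-compatible enough to induce a map of colimits — only the universal property of the source colimit is actually needed), and applying $(\,\cdot\,)^{\topp}$ yields the desired continuous linear map $\iota^{\ub}\colon(E,\mathcal{B})^{\ub}\to(F,\mathcal{C})^{\ub}$ with $\iota^{\ub}$ extending $\iota$ on underlying vector spaces.

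I expect the main obstacle to be the bookkeeping around the induced map on ultrabornologifications: one has to check that $\iota$ genuinely sends Banach disks of $(E,\mathcal{B})$ to Banach disks of $(F,\mathcal{C})$ contained in $\mathcal{C}$ (the injectivity of $\iota$ is what makes the Minkowski functional nondegenerate and the restriction an isometry, so this is exactly where being a monomorphism — equivalently, having trivial, hence locally closed, kernel — is used), and then that these restrictions patch into a well-defined morphism out of the colimit. None of this is hard, but it is the place where the hypothesis is consumed; everything else is formal continuity-of-image-under-closure. As the text remarks, the argument uses only that $\ker\iota$ is locally closed, so the lemma generalises verbatim to any morphism with locally closed kernel — in particular to morphisms with finite-dimensional kernel and to morphisms into objects of $\CBorn$ — and I would phrase the proof so that this generality is visible.
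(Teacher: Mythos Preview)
Your proof is correct and follows essentially the same route as the paper's: the paper's argument is a terse one-liner (``Banach disks are preserved under monomorphisms, hence $\iota\colon(E,\mathcal{B})^{\ub}\to(F,\mathcal{C})^{\ub}$ is continuous, so closures push forward''), and you have faithfully unpacked each of these three steps. Your observation that only a locally closed kernel is needed, and your explanation of where injectivity is consumed, also match the paper's remark preceding the lemma.
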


\begin{proof} Let $\widehat{B} \subseteq B$ be a Banach disk in $(E, \mathcal{B})$ such that $B$ is contained in the closure of $\widehat{B}$ in $(E, \mathcal{B})^{\ub}$. Since Banach disks are preserved under monomorphisms in $\Born$, the linear map $\iota : (E, \mathcal{B})^{\ub} \to (F, \mathcal{C})^{\ub}$ is continuous, and it follows that the closure of $\iota(\widehat{B}) \in \widehat{\mathcal{C}}(F)$ in $(F, \mathcal{C})^{\ub}$ also contains $\iota(B)$.
\end{proof}

A norming disk $B$ in a $k$-vector space $E$ is always an element of the bornology of $(E_B, \|\cdot\|_B)^{\bor}$. Those norming disks $B$ that are pre-Banach disks in $(E_B, \|\cdot\|_B)^{\bor}$ are precisely the Banach disks:

\begin{lem}\label{l:PreBanachDiskSmallest} Suppose $B$ is a norming disk in a $k$-vector space $E$.  Then $B$ is a Banach disk if and only if $B$ is a pre-Banach disk in $(E_B, \|\cdot\|_B)^{\bor}$.
\end{lem}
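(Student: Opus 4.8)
\textbf{The forward direction} is immediate. If $B$ is a Banach disk, then $B$ is $\|\cdot\|_B$-bounded, so $B$ itself is a Banach disk belonging to the bornology of $(E_B,\|\cdot\|_B)^{\bor}$, and trivially $B\subseteq\overline{B}^{(E_B,\|\cdot\|_B)^{\ub}}$; thus $\widehat B:=B$ witnesses that $B$ is a pre-Banach disk in $(E_B,\|\cdot\|_B)^{\bor}$.

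\textbf{For the converse}, let $\widehat B\subseteq B$ be a Banach disk in $(E_B,\|\cdot\|_B)^{\bor}$ with $B\subseteq D$, where $D:=\overline{\widehat B}^{(E_B,\|\cdot\|_B)^{\ub}}$. The plan is to prove that $D$ is a Banach disk with $E_D=E_B$ and $\|\cdot\|_D=\|\cdot\|_B$, so that $(E_B,\|\cdot\|_B)=(E_D,\|\cdot\|_D)$ is a Banach space, i.e.\ $B$ is a Banach disk. The identification of spans and norms is routine once we know $D$ is a Banach disk: I would first note that there is a canonical continuous linear map $(E_B,\|\cdot\|_B)^{\ub}\to(E_B,\|\cdot\|_B)$, obtained by applying $(\,\cdot\,)^{\topp}$ to the canonical bounded map $\colim_{A}(E_A,\|\cdot\|_A)^{\bor}\to(E_B,\|\cdot\|_B)^{\bor}$ (using that normed spaces are normal, Fact \ref{Facts}(vi)) and which is the identity on underlying sets, since every $e\in E_B$ lies in the one-dimensional Banach disk it spans. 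Pushing $\widehat B\subseteq B\subseteq D$ forward along this map yields $B\subseteq D\subseteq\overline{\widehat B}^{(E_B,\|\cdot\|_B)}\subseteq\overline{B}^{(E_B,\|\cdot\|_B)}$; the rightmost set is the closed unit ball of the normed space $(E_B,\|\cdot\|_B)$, hence has span $E_B$ and Minkowski functional $\|\cdot\|_B$. Sandwiching $B\subseteq D\subseteq\overline{B}^{(E_B,\|\cdot\|_B)}$ then forces $E_D=E_B$ and $\|\cdot\|_D=\|\cdot\|_B$, as wanted.

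\textbf{The main obstacle} is therefore to show that $D=\overline{\widehat B}^{(E_B,\|\cdot\|_B)^{\ub}}$ is a Banach disk. Here I would argue that $(E_B,\|\cdot\|_B)^{\ub}$ is Hausdorff — it maps continuously and injectively into the normed space $(E_B,\|\cdot\|_B)$ — and locally complete, so that the closed bounded absolutely convex set $D$ (closed by construction, bounded because $\widehat B$ is bounded and closures of bounded sets are bounded) is automatically a Banach disk: being contained in some bounded Banach disk $A$ of $(E_B,\|\cdot\|_B)^{\ub}$ and closed there, $D$ is $\|\cdot\|_A$-closed in the Banach space $(E_A,\|\cdot\|_A)$, hence a Banach disk. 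The local completeness of $(E_B,\|\cdot\|_B)^{\ub}$ should be read off from its defining presentation as the topologification of the bornological space $\colim_{A}(E_A,\|\cdot\|_A)^{\bor}$, which lies in $\CBorn$ because the bounded sets of that bornological colimit are contained in the unit balls of the steps and the steps are Banach spaces; the delicate point — and where I expect the genuine work, possibly via a structural property of $(\,\cdot\,)^{\ub}$ recorded earlier — is to confirm that passing through $(\,\cdot\,)^{\topp}$ does not introduce new, pathological bounded sets outside all Banach disks.
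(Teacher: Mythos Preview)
Your forward direction matches the paper's. For the converse, however, you correctly identify the crux but do not close it: you need $(E_B,\|\cdot\|_B)^{\ub}$ to be locally complete so that the closed bounded disk $D$ is automatically a Banach disk, and you yourself flag this as ``the delicate point\ldots where I expect the genuine work''. No such structural property of $(\,\cdot\,)^{\ub}$ is recorded earlier in the paper, and it is not clear how to establish it here without circularity. The obstacle is precisely that $(\,\cdot\,)^{\topp}$ may create bounded sets not contained in any Banach disk of the original bornology \--- this failure of normality is the phenomenon the whole paper revolves around. As it stands, your argument that $D$ is a Banach disk is incomplete.

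The paper sidesteps the problem altogether by working directly in $(E_B,\|\cdot\|_B)$ and exploiting only the completeness of $(E_{\widehat B},\|\cdot\|_{\widehat B})$. From the observation you already made \--- that the identity $(E_B,\|\cdot\|_B)^{\ub}\to(E_B,\|\cdot\|_B)$ is continuous \--- one gets $B\subseteq\overline{\widehat B}^{\|\cdot\|_B}$. An elementary iterative approximation then shows $B\subseteq 2\widehat B$: for $e\in B$ choose $e_1\in\widehat B$ with $e-e_1\in\tfrac12 B$, then $e_2\in\tfrac12\widehat B$ with $(e-e_1)-e_2\in\tfrac14 B$, and so on; the resulting series $\sum_{n}e_n$ is absolutely convergent in the Banach space $(E_{\widehat B},\|\cdot\|_{\widehat B})$, lands in $2\widehat B$, and equals $e$. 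Hence $\widehat B\subseteq B\subseteq 2\widehat B$, so $\|\cdot\|_B$ and $\|\cdot\|_{\widehat B}$ are equivalent norms on $E_B=E_{\widehat B}$, and $(E_B,\|\cdot\|_B)$ is Banach. This uses nothing about the bounded sets of $(E_B,\|\cdot\|_B)^{\ub}$ beyond the trivial inclusion of topologies.
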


\begin{proof}\textquotedblleft{}$\Longrightarrow$\textquotedblright{} If $B$ is a Banach disk in $E$, then it is clearly a pre-Banach disk in $(E_B, \|\cdot\|_B)^{\bor}$, since the ultrabornologification of the latter is precisely $(E_B, \|\cdot\|_B)$.

\smallskip

\textquotedblleft{}$\Longleftarrow$\textquotedblright{} Suppose now that $B$ is a pre-Banach disk in $(E_B, \|\cdot\|_B)^{\bor}$. As the ultrabornologi-fication of $(E_B, \|\cdot\|_B)^{\bor}$ has a finer topology than the $\|\cdot\|_B$-topology, we see that $B$ is the closure of some Banach disk $\widehat{B} \subseteq B$. Take now any $e \in B$. Then, there exists some $e_1 \in \widehat{B}$ such that $e - e_1 \in 2^{-1}B$. Next, we find $e_2 \in 2^{-1}\widehat{B}$ for which $(e - e_1) - e_2 \in 2^{-2}B$. Continuing this process, we construct a sequence $(e_n)_{n \in \NN}$ in $\widehat{B}$ satisfying
\[
e_n \in 2^{-(n-1)} \widehat{B}\;\text{ and }\; e-\fsum_{j=1}^n e_j\in2^{-n} B
\]
for $n\in\NN$ and we obtain
\[
e = \fsum_{n \in \NN} e_n \in 2 \widehat{B}.
\]
Therefore, $\widehat{B} \subseteq B \subseteq 2 \widehat{B}$.
    Consequently, $E_B = E_{\widehat{B}}$ and $\|\cdot\|_B$ and $\|\cdot\|_{\widehat{B}}$ are equivalent norms on $E_B$. We conclude that $(E_B, \|\cdot\|_B)$ is a Banach space, so that $B$ is a Banach disk.
\end{proof}

\begin{cor}\label{c:BanachDiskPreBanachDisk} If $B$ is a Banach disk in $(E, \mathcal{B}) \in \Born$, then $B$ is a pre-Banach disk in $(E, \mathcal{B})$.
\end{cor}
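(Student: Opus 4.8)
The plan is to verify the definition of a pre-Banach disk directly, choosing the Banach disk witnessing the property to be $B$ itself. Concretely, given a Banach disk $B$ in $(E,\mathcal{B})$, there are three things to check: that $B$ is a norming disk belonging to $\mathcal{B}$; that $\widehat{B} := B$ is a Banach disk in $(E,\mathcal{B})$ with $\widehat{B}\subseteq B$; and that $B$ is contained in the closure of $\widehat{B}$ in $(E,\mathcal{B})^{\ub}$.

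The first two points are immediate from the definitions: a Banach disk in $(E,\mathcal{B})$ is by construction an element of $\widehat{\mathcal{B}}(E)$, hence lies in $\mathcal{B}$, and since $(E_B,\|\cdot\|_B)$ is in particular a normed space it is a norming disk; and trivially $\widehat{B}=B\subseteq B$ is again a Banach disk in $(E,\mathcal{B})$. The third point is also trivial, since every subset of a topological space is contained in its own closure. The only thing worth recalling is that this closure is taken inside the topological vector space $(E,\mathcal{B})^{\ub}$, whose underlying vector space is $E$: indeed, for each $e\in E$ the finite-dimensional disk $\{\lambda e : |\lambda|\leqslant 1\}$ is a Banach disk lying in $\mathcal{B}$, because $\mathcal{B}$ is of convex type, so $e$ lies in the colimit $\colim_{B'\in\widehat{\mathcal{B}}(E)}E_{B'}=E$. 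Hence all three conditions hold and $B$ is a pre-Banach disk in $(E,\mathcal{B})$.

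I expect no real obstacle here: the corollary is simply the extension to an arbitrary bornology of the ``$\Longrightarrow$'' implication of Lemma \ref{l:PreBanachDiskSmallest}, and it is obtained with the very same witness $\widehat{B}=B$; the content of the pre-Banach disk notion sits entirely in the preceding lemmas.
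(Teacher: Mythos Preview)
Your proof is correct and in fact more direct than the paper's. The paper proceeds by invoking the two preceding lemmas: first Lemma~\ref{l:PreBanachDiskSmallest} gives that $B$ is a pre-Banach disk in $(E_B,\|\cdot\|_B)^{\bor}$, and then Lemma~\ref{l:PreBanachDiskInclusion} transports this along the bounded inclusion $(E_B,\|\cdot\|_B)^{\bor}\hookrightarrow(E,\mathcal{B})$. Your approach bypasses both lemmas by simply taking $\widehat{B}=B$ and observing that every set is contained in its own closure. This is the cleaner argument for this particular corollary; the paper's route mainly serves to illustrate that the two lemmas compose as intended. One minor remark: your paragraph checking that the underlying vector space of $(E,\mathcal{B})^{\ub}$ equals all of $E$ is more than is needed here --- for the closure of $B$ in $(E,\mathcal{B})^{\ub}$ to make sense it suffices that $B$ lie in that space, and this is immediate from $B\in\widehat{\mathcal{B}}(E)$.
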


\begin{proof}
    By Lemma \ref{l:PreBanachDiskSmallest} we have that $B$ is a pre-Banach disk in $(E_B, \|\cdot\|_B)^{\bor}$.
    As $B \in \mathcal{B}$ is bounded, $(E_B, \|\cdot\|_B)^{\bor}\subseteq (E, \mathcal{B})$ holds, so the proof is finished by Lemma \ref{l:PreBanachDiskInclusion}.
\end{proof}

We now present our description of the range of $(\hspace{1pt}\cdot\hspace{1pt})^{\bor}\colon\LBH\rightarrow\Born$. 

\begin{thm}\label{t:RangeLB} For $(E, \mathcal{B}) \in \Born$ the following are equivalent: \vspace{3pt}

\begin{compactitem}

\item[(i)] $(E, \mathcal{B}) = (E, \tau)^{\bor}$ for some $(E, \tau) \in \LBH$.

\vspace{2pt}

\item[(ii)] There exists a sequence of Banach disks $(\widehat{B}_n)_{n \in \NN}$ and a sequence of pre-Banach disks $(B_n)_{n \in \NN}$ in $(E, \mathcal{B})$ such that $(B_n)_{n \in \NN}$ satisfies (I) and (II) of Theorem \ref{thm-char} and
\begin{equation}
\phantom{x\hspace{32pt}x}E = \mathop{\textstyle\bigcup}_{n \in \NN} E_{\widehat{B}_n} \;\,\text{ and }\;\; (E, \mathcal{B}) = \colim_{n \in \NN} (E_{B_n}, \|\cdot\|_{B_n})^{\bor}. \tag{4a \text{\,and\,} 4b}
\begin{picture}(0,0)(0,0)\put(-500,10){\begin{minipage}{20pt}
\begin{equation}\label{eq:EUnionPreBanachDisks}\tag{4a}\vspace{-50pt}
\end{equation}
\begin{equation}\label{eq:LBSpPreBanachDisk}\tag{4b}
\end{equation}
\end{minipage}}
\end{picture}
\end{equation}

\end{compactitem}
\end{thm}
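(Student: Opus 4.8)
The plan is to prove the equivalence by passing through the topologification and using the already-established machinery, in particular Theorem \ref{thm-char} and the Grothendieck factorization results of Lemma \ref{l:GrothendieckFactorization}.

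\emph{(i) $\Rightarrow$ (ii).} Suppose $(E,\mathcal{B}) = (E,\tau)^{\bor}$ with $(E,\tau)\cong\colim_{n}(E_n,\|\cdot\|_n)\in\LBH$. First I would produce the pre-Banach disks $(B_n)$: take $B_n$ to be the closed unit ball of the step $E_n$, viewed inside $E$. Since $(E,\tau)$ is Hausdorff and each $B_n$ is $\tau$-bounded, $B_n$ is a norming disk, and $B_n\in\mathcal{B}$. To see that $B_n$ is a \emph{pre-Banach} disk in $(E,\mathcal{B})$, I would unwind the definition of $(E,\mathcal{B})^{\ub}$: the unit ball $\widehat{B}_n$ of $E_n$ is a Banach disk contained in $\mathcal{B}$ (the completion of $E_n$ embeds into $E$ because $E$ is Hausdorff and $E_n\hookrightarrow E$ continuously, so by closedness the unit ball of $E_n$ is already complete — here take $B_n=\widehat{B}_n$, i.e. the two sequences coincide), hence $B_n\subseteq\overline{\widehat{B}_n}$ trivially in $(E,\mathcal{B})^{\ub}$. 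Next, $(E,\mathcal{B}) = (E,\tau)^{\bor} = \colim_n (E_n,\|\cdot\|_n)^{\bor}$: this is exactly the hypothesis that $(E,\tau)^{\bor\topp}=(E,\tau)$ combined with $(E,\tau)^{\bor}=\colim_n(E_n,\|\cdot\|_n)^{\bor}$ — but wait, that last equality is \emph{regularity}, which Hausdorff LB-spaces need not have. The correct move is: $(E,\mathcal{B})=(E,\tau)^{\bor}$ and, by Theorem \ref{thm-char} applied in the forward direction to \emph{any} presentation $(E,\tau)\cong\colim_n(E_n,\|\cdot\|_n)$ — we do \emph{not} get \eqref{eq:LBSpPreBanachDisk} from the $B_n$'s directly. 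Instead I would use $(E,\mathcal{B})^{\topp}=(E,\tau)$ (valid since LB-spaces are normal, \eqref{LB-eq}) and then observe $(E,\mathcal{B}) = (E,\mathcal{B})^{\topp\bor} = (E,\tau)^{\bor}$, and invoke \cite[Prop 25.16]{MV1997} to write $(E,\tau)^{\bor}=\colim_n(E_{\widetilde{B}_n},\|\cdot\|_{\widetilde{B}_n})^{\bor}$ where $\widetilde{B}_n=\overline{B_n}^{(E,\tau)}$. So the right choice is $B_n:=\widetilde{B}_n$, the $\tau$-closures of the unit balls; these are norming (they are $\tau$-bounded, absolutely convex, in a Hausdorff space), they satisfy \eqref{eq:LBSpPreBanachDisk} by \cite[Prop 25.16]{MV1997}, they satisfy (I) and (II) by the $\Rightarrow$-direction of Theorem \ref{thm-char} since $(E,\mathcal{B})\in\NBorn$, and they are pre-Banach disks: the Banach disk $\widehat{B}_n$ is the unit ball of $E_n$, which is $\|\cdot\|_n$-complete as $E_n$ is Banach, hence a Banach disk in $(E,\mathcal{B})$, and $B_n=\widetilde{B}_n$ is the $\tau$-closure of $\widehat{B}_n$, which is coarser than $(E,\mathcal{B})^{\ub}$, so $B_n\subseteq\overline{\widehat{B}_n}^{(E,\mathcal{B})^{\ub}}$. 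Finally $E=\bigcup_n E_n = \bigcup_n E_{\widehat{B}_n}$, giving \eqref{eq:EUnionPreBanachDisks}.

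\emph{(ii) $\Rightarrow$ (i).} Set $(E,\tau):=(E,\mathcal{B})^{\topp}=\colim_n(E_{B_n},\|\cdot\|_{B_n})$ using \eqref{eq:LBSpPreBanachDisk} and Fact \ref{Facts}(i). By Theorem \ref{thm-char}, conditions (I) and (II) give $(E,\mathcal{B})\in\NBorn$, hence $(E,\tau)^{\bor}=(E,\mathcal{B})^{\topp\bor}=(E,\mathcal{B})$. It remains to exhibit a genuine \emph{LB}-presentation of $(E,\tau)$, i.e. with Banach steps. For each $n$, let $\widehat{B}_n$ be the Banach disk witnessing that $B_n$ is a pre-Banach disk; by the argument in the proof of Lemma \ref{l:PreBanachDiskSmallest} (the geometric-series telescoping), $\widehat{B}_n\subseteq B_n\subseteq 2\widehat{B}_n$ \emph{if} the relevant closure is taken in a topology coarser than $\|\cdot\|_{B_n}$ — but here the closure defining pre-Banach is in $(E,\mathcal{B})^{\ub}$, which is \emph{finer} than the $\|\cdot\|_{B_n}$-topology, so this telescoping does not directly apply. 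The fix: the space $E_{\widehat{B}_n}$ is a Banach space, $E_{\widehat{B}_n}\subseteq E_{B_n}$ with continuous inclusion, and by \eqref{eq:EUnionPreBanachDisks}, $E=\bigcup_n E_{\widehat{B}_n}$. Passing to a cofinal subsequence and using that the $B_n$ form an increasing (after enlarging) absorbing system, I would arrange Banach spaces $\widehat{E}_n:=E_{\widehat{B}_{k_n}}$ with injective continuous linking maps whose colimit in $\Tc$ is again $(E,\tau)$: indeed $\colim_n E_{B_n} = \colim_n E_{\widehat{B}_n}$ because each $E_{B_n}$ embeds continuously into some $E_{\widehat{B}_m}$ (the pre-Banach closure is taken in $(E,\mathcal{B})^{\ub}=\colim_{B\in\widehat{\mathcal B}(E)}E_B$, so $B_n$ bounded there means $B_n\subseteq C\cdot \widehat{B}_m$ for some Banach disk $\widehat{B}_m\in\widehat{\mathcal B}(E)$ and by \eqref{eq:EUnionPreBanachDisks} we may take it among our chosen family) and conversely $E_{\widehat{B}_n}\subseteq E_{B_n}$. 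Hausdorffness of $(E,\tau)$ follows from (I) as in the proof of Theorem \ref{thm-char}. Hence $(E,\tau)\in\LBH$ and $(E,\mathcal{B})=(E,\tau)^{\bor}$.

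\emph{Main obstacle.} The delicate point is the interplay of \emph{three} topologies on each $E_{B_n}$: the seminorm $\|\cdot\|_{B_n}$, the induced topology from $(E,\tau)$, and the induced topology from $(E,\mathcal{B})^{\ub}$ — which need \emph{not} coincide — together with the fact that a pre-Banach disk's witnessing Banach disk is closed in the \emph{finer} $(E,\mathcal{B})^{\ub}$ rather than in the weaker seminorm, so one cannot simply quote Lemma \ref{l:PreBanachDiskSmallest}'s telescoping. I expect the bulk of the work in (ii) $\Rightarrow$ (i) to be verifying carefully that replacing the $E_{B_n}$-steps by the Banach steps $E_{\widehat{B}_n}$ does not change the $\Tc$-colimit and yields injective linking maps — this is where Lemma \ref{l:GrothendieckFactorization}(ii) (any Banach space continuously injecting into an LB-space factors through a step) will be the essential tool to reconcile the two cofinal systems.
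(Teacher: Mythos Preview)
Your outline has the right skeleton, but there is a genuine gap in both directions, and it is the same missing idea each time: the \emph{equality} $(E,\tau)=(E,\mathcal{B})^{\ub}$.

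In (i)\,$\Rightarrow$\,(ii) you set $B_n:=\overline{\widehat{B}_n}^{\,(E,\tau)}$ and argue that $\tau$ is coarser than the topology of $(E,\mathcal{B})^{\ub}$, ``so $B_n\subseteq\overline{\widehat{B}_n}^{\,(E,\mathcal{B})^{\ub}}$''. The implication goes the wrong way: a coarser topology gives a \emph{larger} closure, hence only $B_n\supseteq\overline{\widehat{B}_n}^{\,(E,\mathcal{B})^{\ub}}$. To get the inclusion you need, you must actually prove that the two topologies coincide, i.e.\ $(E,\tau)=(E,\mathcal{B})^{\ub}$. The paper does this explicitly: the identity $\colim_n(E_{\widehat{B}_n},\|\cdot\|_{\widehat{B}_n})\rightarrow (E,\mathcal{B})^{\ub}$ is continuous, and conversely $((E,\tau)^{\bor})^{\ub}$ is always finer than $\tau$; hence the identity is an isomorphism.

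In (ii)\,$\Rightarrow$\,(i) the gap is more serious. Your key claim is that each $E_{B_n}$ embeds continuously into some $E_{\widehat{B}_m}$, justified by ``$B_n$ bounded in $(E,\mathcal{B})^{\ub}$ means $B_n\subseteq C\cdot\widehat{B}_m$''. Neither half of this holds in general: the pre-Banach condition only says $B_n$ lies in the $(E,\mathcal{B})^{\ub}$-closure of a Banach disk, which does not make $B_n$ bounded in $(E,\mathcal{B})^{\ub}$; and even boundedness in an inductive limit does not give containment in a step unless the limit is regular. If your claim were true, each $B_n$ would be absorbed by a Banach disk and the pre-Banach notion would collapse to the Banach one. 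Grothendieck factorization (Lemma \ref{l:GrothendieckFactorization}(ii)) does not help here, since the $E_{B_n}$ are not Banach. The paper's argument is genuinely different: after arranging $\widehat{B}_n\subseteq B_n\subseteq\overline{\widehat{B}_n}^{\,(E,\mathcal{B})^{\ub}}$, it first uses De Wilde's open mapping theorem to identify $(E,\mathcal{B})^{\ub}=\colim_n(E_{\widehat{B}_n},\|\cdot\|_{\widehat{B}_n})$, and then shows $(E,\tau)=(E,\mathcal{B})^{\ub}$ by checking that every closed absolutely convex $0$-nbhd $V$ in $(E,\mathcal{B})^{\ub}$ absorbs each $B_n$ --- indeed $V$ absorbs $\widehat{B}_n$, and taking $(E,\mathcal{B})^{\ub}$-closures then forces $V$ to absorb $B_n$ as well. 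This $0$-nbhd argument is what replaces your attempted cofinality of the two systems.
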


\addtocounter{equation}{1}

\begin{proof}(i)\hspace{1pt}$\Longrightarrow$\hspace{1pt}(ii): Suppose we have $(E, \tau) = \colim_{n \in \NN} (E_n, \|\cdot\|_n)\in\LBH$ with $(E,\mathcal{B})=(E,\tau)^{\bor}$. Let $\widehat{B}_n$ denote the unit ball of $(E_n, \|\cdot\|_n)$, thus in particular \eqref{eq:EUnionPreBanachDisks} holds and each $\widehat{B}_n$ is a Banach disk in $(E, \mathcal{B})$. We will now first show that
\begin{equation}\label{eq:CoLimitIsUB}
(E, \tau) = (E, \mathcal{B})^{\ub}
\end{equation}
holds. We have $E_n=E_{\widehat{B}_n}$ by construction and thus $\colim_{n \in \NN}(E_{\widehat{B}_n}, \|\cdot\|_{\widehat{B}_n})=(E,\tau)$ as vector spaces. It follows $(E_{\widehat{B}_n}, \|\cdot\|_{\widehat{B}_n}) \hookrightarrow (E, \mathcal{B})^{\ub}$ and thus the identity
\begin{equation}\label{eq:ID}
\id\colon\colim_{n \in \NN} (E_{\widehat{B}_n}, \|\cdot\|_{\widehat{B}_n}) \rightarrow (E, \mathcal{B})^{\ub}
\end{equation}
is continuous. As the topology of $((E, \tau)^{\bor})^{\ub}$ is always finer than $\tau$, we conclude that \eqref{eq:ID} is an isomorphism in $\Tc$. This establishes \eqref{eq:CoLimitIsUB}. We now write $B_n$ for the closure of $\widehat{B}_n$ in $(E,\tau)$. Then each $B_n$ is a pre-Banach disk in $(E, \mathcal{B})$ by \eqref{eq:CoLimitIsUB}. The validity of \eqref{eq:LBSpPreBanachDisk} follows from \cite[Prop 25.16]{MV1997}.
Since $(E, \tau) \in \NTc$, also $(E, \mathcal{B}) \in \NBorn$, therefore Theorem \ref{thm-char} shows that $(B_n)_{n \in \NN}$ satisfies conditions (I) and (II).

\smallskip

(ii)\hspace{1pt}$\Longrightarrow$\hspace{1pt}(i): Suppose we have a sequence of bounded Banach disks $(\widehat{B}_n)_{n \in \NN}$ and a sequence of pre-Banach disks $(B_n)_{n \in \NN}$ in $(E, \mathcal{B})$ for which \eqref{eq:EUnionPreBanachDisks} and \eqref{eq:LBSpPreBanachDisk} hold, and such that $(B_n)_{n \in \NN}$ satisfies conditions (I) and (II) of Theorem \ref{thm-char}. For every $n\in\NN$, let $\widetilde{B}_n\subseteq B_n$ be such that $B_n$ is contained in the $(E,\mathcal{B})^{\ub}$-closure of $\widetilde{B}_n$. \vspace{-2pt}Potentially by changing $B_n$ into $\sum_{j = 1}^n [B_j + \widehat{B}_j]$ and $\widehat{B}_n$ into $\sum_{j=1}^n [\widetilde{B}_j + \widehat{B}_j]$,\vspace{-2pt} using that the Minkowski sum of two Banach disks is again a Banach disk \cite[Lem 3.2.10]{W03}, we may assume that the sequences $(\widehat{B}_n)_{n \in \NN}$ and $(B_n)_{n \in \NN}$ are increasing and that $B_n$ is contained in the closure of $\widehat{B}_n$ in $(E, \mathcal{B})^{\ub}$ for each $n \in \NN$. Put $(E, \tau) = (E, \mathcal{B})^{\topp}$, then
\[
(E, \tau) = \colim_{n \in \NN} (E_{B_n}, \|\cdot\|_{B_n})
\]
and $(E, \mathcal{B}) = (E, \tau)^{\bor}$ since $(E, \mathcal{B})\in\NBorn$ by Theorem \ref{thm-char}. Note that by Lemma \ref{l:NecessarilyHausdorff}, we have $(E, \tau) \in \TcH$. Now we claim
\[
(E, \mathcal{B})^{\ub}= \colim_{n \in \NN} (E_{\widehat{B}_n}, \|\cdot\|_{\widehat{B}_n}).
\]
Indeed, by \eqref{eq:EUnionPreBanachDisks} we have that the right-hand side coincides as a vector space with $E$. Moreover, a similar argument as before shows that $\colim_{n \in \NN} (E_{\widehat{B}_n}, \|\cdot\|_{\widehat{B}_n}) \hookrightarrow (E, \mathcal{B})^{\ub}$. Both topologies are finer than the Hausdorff topology $\tau$ and thus they are Hausdorff, too. But then, they must necessarily coincide by De Wilde's open mapping theorem \cite[Thm 24.30]{MV1997}. This establishes the claim.

\smallskip

Take now any absolutely convex, closed 0-nbhd $V$ in $(E, \mathcal{B})^{\ub}$. Then $V \cap E_{\widehat{B}_n}$ is a closed 0-nbhd in $(E_{\widehat{B}_n}, \|\cdot\|_{\widehat{B}_n})$ for each $n \in \NN$. Consequently, for any $n \in \NN$, there is some $r_n > 0$ such that 
\[
r_n \widehat{B}_n \subseteq V \cap E_{\widehat{B}_n} \subseteq V.
\]
Taking the closures of the left- and right-hand side in $(E, \mathcal{B})^{\ub}$ gives
\[
r_n B_n \subseteq V.
\]
Therefore, $V$ is also a 0-nbhd of $(E, \tau)$. We conclude  
\[ 
(E, \tau)=(E, \mathcal{B})^{\ub}= \colim_{n \in \NN} (E_{\widehat{B}_n}, \|\cdot\|_{\widehat{B}_n}),
\]
that is, $(E, \tau) \in \LBH$.
\end{proof}



Next, we establish the following characterization of $\bLB$.

\begin{thm}\label{t:bLB=rangeInLBb}It holds
\begin{align}
\bLB &= \{ (E, \mathcal{B}) \in \CBorn \mid (E, \mathcal{B}) = (E, \tau)^{\bor} \text{ for some } (E, \tau) \in \LB  \}\label{eq:bLB=LBRangeinCBorn}\\
     &= \{ (E, \mathcal{B}) \in \LBb \mid (E, \mathcal{B}) = (E, \tau)^{\bor} \text{ for some } (E, \tau) \in \Tc \}.\label{eq:bLB=rangeInLBb}
\end{align}
\end{thm}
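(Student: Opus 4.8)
The plan is to prove the two set equalities separately, showing a cycle of inclusions in each case and leveraging the already-established Theorem \ref{t:RangeLB} together with the definitions of $\bLB$, $\LBr$, $\LBb$, and $\CBorn$.

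For \eqref{eq:bLB=LBRangeinCBorn}, first note that $\subseteq$ is immediate: if $(E,\mathcal B)\in\bLB$ then by definition $(E,\mathcal B)=(E,\tau)^{\bor}$ for some regular LB-space $(E,\tau)$, in particular $(E,\tau)\in\LB$; and since $(E,\tau)$ is regular, $(E,\mathcal B)=\colim_n(E_n,\|\cdot\|_n)^{\bor}$ for a defining sequence of Banach spaces, which exhibits $(E,\mathcal B)$ as a countable filtered colimit of Banach spaces in $\Born$, hence $(E,\mathcal B)\in\CBorn$ by Fact \ref{Facts}(vii) (or directly from the definition of $\CBorn$ as filtered colimits of Banach spaces). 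For $\supseteq$, suppose $(E,\mathcal B)\in\CBorn$ and $(E,\mathcal B)=(E,\tau)^{\bor}$ with $(E,\tau)\in\LB$, say $(E,\tau)=\colim_n(E_n,\|\cdot\|_n)$. By Lemma \ref{l:NecessarilyHausdorff}, I would first argue that $(E,\tau)$ is Hausdorff — but this needs care, since $(E,\mathcal B)$ being a colimit of \emph{Banach} spaces is what we must exploit: writing $(E,\mathcal B)=\colim_{i}(F_i,\|\cdot\|_i)^{\bor}$ as an essentially monomorphic filtered Ind-object over Banach spaces, the closure $D=\overline{\{0\}}^{(E,\tau)}$ is bounded in $(E,\mathcal B)$, hence contained and bounded in some Banach step $(F_{i_0},\|\cdot\|_{i_0})$, and a bounded linear subspace of a Banach space is $\{0\}$; so $(E,\tau)\in\LBH$. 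Then, since $(E,\mathcal B)=(E,\tau)^{\bor}$ with $(E,\tau)\in\LBH$, Theorem \ref{t:RangeLB} applies; to upgrade $(E,\tau)\in\LBH$ to $(E,\tau)\in\LBr$, I would show that the condition $(E,\mathcal B)\in\CBorn$ forces regularity — every bounded set of $(E,\tau)$ is bounded in $(E,\mathcal B)$, hence lives in a Banach step of the Ind-presentation, and using Lemma \ref{l:GrothendieckFactorization}(ii) one transports this into a step of the defining sequence $(E_n)$, which is exactly regularity. Hence $(E,\mathcal B)=(E,\tau)^{\bor}\in\bLB$.

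For \eqref{eq:bLB=rangeInLBb}, the inclusion $\subseteq$ follows from diagram \eqref{DIAG} (namely $\bLB\subseteq\LBb$, already noted) together with the fact that every bLB-space is by definition $(E,\tau)^{\bor}$ for some $(E,\tau)\in\LBr\subseteq\Tc$. For $\supseteq$, suppose $(E,\mathcal B)\in\LBb$ with $(E,\mathcal B)=(E,\tau)^{\bor}$ for some $(E,\tau)\in\Tc$. Write $(E,\mathcal B)=\colim_n(E_n,\|\cdot\|_n)^{\bor}$; then $(E,\mathcal B)\in\CBorn$ since it is a countable filtered colimit of Banach spaces. Moreover $(E,\tau)^{\bor\hspace{-1pt}\topp}$ need not equal $(E,\tau)$, but $(E,\tau)^{\bor}=(E,\tau)^{\bor\hspace{-1pt}\topp\hspace{-1pt}\bor}=\bigl((E,\mathcal B)^{\topp}\bigr)^{\bor}$ by Fact \ref{Facts}(ii), and $(E,\mathcal B)^{\topp}=\colim_n(E_n,\|\cdot\|_n)\in\LB$ by Fact \ref{Facts}(i); replacing $(E,\tau)$ by $(E,\mathcal B)^{\topp}$ we may assume $(E,\tau)\in\LB$. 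Now $(E,\mathcal B)\in\CBorn$ and $(E,\mathcal B)=(E,\tau)^{\bor}$ with $(E,\tau)\in\LB$, so we are exactly in the situation covered by \eqref{eq:bLB=LBRangeinCBorn}, giving $(E,\mathcal B)\in\bLB$.

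The main obstacle I anticipate is the $\supseteq$ direction of \eqref{eq:bLB=LBRangeinCBorn}: the delicate point is converting the hypothesis "$(E,\mathcal B)\in\CBorn$" — a statement about the abstract bornological structure, phrased via Banach-disk or Ind-object presentations — into the \emph{regularity} of the topological LB-space $(E,\tau)$ with respect to one (equivalently every, by Lemma \ref{l:GrothendieckFactorization}(i)) of its defining sequences. This requires carefully matching the Banach steps coming from the $\CBorn$-presentation with the Banach steps $(E_n,\|\cdot\|_n)$ of the LB-structure, which is precisely where Grothendieck's factorization theorem (Lemma \ref{l:GrothendieckFactorization}) does the work: any Banach disk bounded in $(E,\mathcal B)=(E,\tau)^{\bor}$ embeds continuously into $(E,\tau)$, hence factors through some $(E_n,\|\cdot\|_n)$, and this is exactly the bound needed to conclude $(E,\tau)^{\bor}=\colim_n(E_n,\|\cdot\|_n)^{\bor}$, i.e. $(E,\tau)\in\LBr$.
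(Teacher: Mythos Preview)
Your proposal is correct and follows essentially the same route as the paper: for \eqref{eq:bLB=LBRangeinCBorn} you establish Hausdorffness via (the argument of) Lemma~\ref{l:NecessarilyHausdorff} and then deduce regularity of $(E,\tau)$ by combining the $\CBorn$ hypothesis with Grothendieck factorisation (Lemma~\ref{l:GrothendieckFactorization}(ii)); for \eqref{eq:bLB=rangeInLBb} you reduce to \eqref{eq:bLB=LBRangeinCBorn} by replacing the arbitrary $(E,\tau)$ with $(E,\mathcal B)^{\topp}\in\LB$. The only superfluous step is the appeal to Theorem~\ref{t:RangeLB}, which neither you nor the paper actually uses here; the argument goes through without it.
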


\begin{proof} Any $(E, \mathcal{B}) \in \bLB$ is the image under $(\hspace{1pt}\cdot\hspace{1pt})^{\topp}$ of some $(E, \tau) \in \LBr$, so it is clearly contained in the right hand sides of \eqref{eq:bLB=LBRangeinCBorn} and \eqref{eq:bLB=rangeInLBb}.

\smallskip

Now we first verify \eqref{eq:bLB=LBRangeinCBorn}. Suppose $(E,\tau) \in \LB$ is such that $(E, \mathcal{B}) = (E, \tau)^{\bor} \in \CBorn$. By Lemma \ref{l:NecessarilyHausdorff}, we have that $(E, \tau)$ is Hausdorff. Let $(E, \tau) = \colim_{n \in \NN} (E_n, \|\cdot\|_n)$. By Lemma \ref{l:GrothendieckFactorization}(ii), each Banach disk in $(E, \mathcal{B})$ is contained in some step $(E_n, \|\cdot\|_n)^{\bor}$.  Now, as $(E, \mathcal{B}) \in \CBorn$, we have that each bounded set of $(E, \tau)$ is contained and bounded in some step. Consequently, $(E, \tau) \in \LBr$.

\smallskip

It remains to prove \eqref{eq:bLB=rangeInLBb}. Let $(E, \mathcal{B}) = (E, \tau)^{\bor} = \colim_{n \in \NN} (E_n, \|\cdot\|_n)^{\bor}$ for a sequence $(E_1,\|\cdot\|_1)\hookrightarrow(E_2,\|\cdot\|_2)\hookrightarrow\cdots$ of Banach spaces.  Then $(E, \mathcal{B})^{\topp} \cong \colim_{n \in \NN} (E_n, \|\cdot\|_n)$. Also 
\[
((E, \mathcal{B})^{\topp})^{\bor} = (E, \mathcal{B}) \in \CBorn.
\]
By our previous observations, we have $(E, \mathcal{B})^{\topp} \in \LBr$, which finishes our argument.
\end{proof}

We may now point out the exact difference between $\bLB$ and $\LBb$.
Essentially, the ensuing result states that $(E, \mathcal{B}) \in \LBb$ belongs to $\bLB$ if in Theorem \ref{t:RangeLB} we may choose $\widehat{B}_n = B_n$.

\begin{thm}
\label{t:bLb=LBb+NBorn}
An $(E, \mathcal{B}) = \colim_{n \in \NN} (E_{B_n}, \|\cdot\|_{B_n})^{\bor} \in \LBb$ belongs to $\bLB$ if and only if $(B_n)_{n \in \NN}$ satisfies (I) and (II) of Theorem \ref{thm-char}.
In particular,
    \[ \bLB = \LBb \cap \NBorn . \]
\end{thm}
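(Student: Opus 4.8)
The plan is to establish the biconditional for a fixed defining sequence $(B_n)_{n\in\NN}$ of norming disks, and then deduce the clean identity $\bLB = \LBb\cap\NBorn$ as a corollary.

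\textbf{Proof of the biconditional.} First I would handle the forward direction. Suppose $(E,\mathcal{B}) = \colim_{n\in\NN}(E_{B_n},\|\cdot\|_{B_n})^{\bor}\in\bLB$. Since $\bLB\subseteq\NBorn$ (as remarked after Definition \ref{DFN-LB}), we immediately get $(E,\mathcal{B})\in\NBorn$, and Theorem \ref{thm-char} then says precisely that the chosen defining sequence $(B_n)_{n\in\NN}$ satisfies (I) and (II). This direction is essentially a one-line citation of Theorem \ref{thm-char}. For the converse, assume $(B_n)_{n\in\NN}$ satisfies (I) and (II). By Theorem \ref{thm-char}, $(E,\mathcal{B})\in\NBorn$. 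To conclude $(E,\mathcal{B})\in\bLB$, by the characterisation \eqref{eq:bLB=rangeInLBb} of Theorem \ref{t:bLB=rangeInLBb}, it suffices to show that $(E,\mathcal{B})\in\LBb$ (which holds by hypothesis) and that $(E,\mathcal{B}) = (E,\tau)^{\bor}$ for some $(E,\tau)\in\Tc$. Taking $(E,\tau) := (E,\mathcal{B})^{\topp}$, normality of $(E,\mathcal{B})$ gives $(E,\tau)^{\bor} = (E,\mathcal{B})^{\topp\hspace{-1pt}\bor} = (E,\mathcal{B})$, so this is satisfied. Hence $(E,\mathcal{B})\in\bLB$.

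\textbf{Proof of $\bLB = \LBb\cap\NBorn$.} The inclusion $\bLB\subseteq\LBb\cap\NBorn$ is already recorded: $\bLB\subseteq\LBb$ from diagram \eqref{DIAG}, and $\bLB\subseteq\NBorn$ from the discussion following Definition \ref{DFN-LB}. For the reverse inclusion, take $(E,\mathcal{B})\in\LBb\cap\NBorn$. Being in $\LBb$, it admits a representation $(E,\mathcal{B})\cong\colim_{n\in\NN}(E_n,\|\cdot\|_n)^{\bor}$ with Banach steps; identifying the $E_n$ with $E_{B_n}$ for $B_n$ the unit ball images, we obtain a defining sequence of norming (indeed Banach) disks. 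Since $(E,\mathcal{B})\in\NBorn$, Theorem \ref{thm-char} yields that this sequence satisfies (I) and (II), and then the biconditional just proved gives $(E,\mathcal{B})\in\bLB$.

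\textbf{Main obstacle.} The only subtlety I anticipate is bookkeeping rather than substance: in the converse direction of the biconditional one must be careful that the same defining sequence $(B_n)_{n\in\NN}$ witnessing membership in $\LBb$ is the one fed into Theorem \ref{thm-char}, and that the passage $(E,\mathcal{B})^{\topp}\cong\colim_{n\in\NN}(E_{B_n},\|\cdot\|_{B_n})$ used implicitly by Theorem \ref{thm-char} is legitimate — this is exactly the commutation of $(\,\cdot\,)^{\topp}$ with colimits from Fact \ref{Facts}(i), applied to the colimit defining $(E,\mathcal{B})$. Beyond that, the statement is a direct packaging of Theorem \ref{thm-char} together with the characterisation \eqref{eq:bLB=rangeInLBb} of $\bLB$ inside $\LBb$, so no new functional-analytic input is needed.
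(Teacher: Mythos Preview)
Your proof is correct and follows essentially the same approach as the paper: both directions of the biconditional are obtained by combining Theorem \ref{thm-char} with the characterisation \eqref{eq:bLB=rangeInLBb} from Theorem \ref{t:bLB=rangeInLBb}, and the identity $\bLB=\LBb\cap\NBorn$ is then immediate. Your write-up is more explicit than the paper's (which dispatches the whole argument in three lines), but the logical content is identical.
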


\begin{proof}
If $(E, \mathcal{B}) \in \bLB$, then in particular $(E, \mathcal{B}) \in \NBorn$, so necessity follows from Theorem \ref{thm-char}.
Conversely, if the conditions are satisfied, Theorem \ref{thm-char} shows $(E, \mathcal{B}) \in \NBorn$. Therefore, $(E, \mathcal{B})$ belongs to the right-hand side of \eqref{eq:bLB=rangeInLBb}, so we have $(E, \mathcal{B}) \in \bLB$ by Theorem \ref{t:bLB=rangeInLBb}.
\end{proof}

Theorem \ref{t:bLB=rangeInLBb} also yields the following description of the preimage of $\LBb$ under the functor $(\hspace{1pt}\cdot\hspace{1pt})^{\bor}\colon\LB\rightarrow\Born$.


\begin{thm}
\label{t:CharRegularity}
For any $(E, \tau) \in \LB$, the following are equivalent:\vspace{2pt}
\begin{compactitem}
\item[(i)] $(E, \tau) \in \LBr$.\vspace{1pt}
\item[(ii)] $(E, \tau)^{\bor} \in \bLB$.\vspace{1pt}
\item[(iii)] $(E, \tau)^{\bor} \in \LBb$.\vspace{1pt}
\item[(iv)] $(E, \tau)^{\bor} \in \CBorn$.
\end{compactitem}
\end{thm}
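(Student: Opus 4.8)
The plan is to establish the cycle of implications (i)\,$\Rightarrow$\,(ii)\,$\Rightarrow$\,(iii)\,$\Rightarrow$\,(iv)\,$\Rightarrow$\,(i), building on the two descriptions of $\bLB$ obtained in Theorem \ref{t:bLB=rangeInLBb} together with the chain of inclusions $\bLB\subseteq\LBb\subseteq\CBorn$ recorded in diagram \eqref{DIAG}. The first three implications amount to bookkeeping; essentially all of the substance sits in (iv)\,$\Rightarrow$\,(i), and even there the work has already been done elsewhere in the paper.

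For (i)\,$\Rightarrow$\,(ii) there is nothing to prove: if $(E,\tau)\in\LBr$, then $(E,\tau)^{\bor}$ is a bLB-space by Definition \ref{DFN-LB}(i). For (ii)\,$\Rightarrow$\,(iii) I would invoke the inclusion $\bLB\subseteq\LBb$ noted just before \eqref{DIAG}. For (iii)\,$\Rightarrow$\,(iv) the relevant fact is $\LBb\subseteq\CBorn$ (also part of \eqref{DIAG}): an object of $\LBb$ is a countable $\Born$-colimit of Banach spaces along injective linking maps, hence an essentially monomorphic Ind-object over Banach spaces, so it lies in $\CBorn$ by Fact \ref{Facts}(vii). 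Concretely, by \cite[Prop 2.1]{PS00} every bounded subset of such a colimit is bounded in one of the Banach steps, hence contained in a multiple of that step's unit ball \--- a Banach disk \--- and the colimit is separated because a Banach space has no nonzero bounded subspace.

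The only implication carrying genuine content is (iv)\,$\Rightarrow$\,(i); here I would simply reproduce the argument used to verify the equality \eqref{eq:bLB=LBRangeinCBorn} inside the proof of Theorem \ref{t:bLB=rangeInLBb}: given $(E,\tau)=\colim_{n\in\NN}(E_n,\|\cdot\|_n)\in\LB$ with $(E,\tau)^{\bor}\in\CBorn$, Lemma \ref{l:NecessarilyHausdorff} forces $(E,\tau)\in\TcH$; by Lemma \ref{l:GrothendieckFactorization}(ii) every Banach disk of $(E,\tau)^{\bor}$ is contained in some step $(E_n,\|\cdot\|_n)^{\bor}$; and since $(E,\tau)^{\bor}\in\CBorn$ every bounded subset of $(E,\tau)$ lies in such a Banach disk, hence is bounded in some step \--- which is exactly the defining property of a regular LB-space, so $(E,\tau)\in\LBr$. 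Alternatively one can argue purely formally: \eqref{eq:bLB=LBRangeinCBorn} already gives $(E,\tau)^{\bor}\in\bLB$, so $(E,\tau)^{\bor}\cong(F,\sigma)^{\bor}$ for some $(F,\sigma)\in\LBr$, and applying $(\,\cdot\,)^{\topp}$ while using that $(E,\tau)$ and $(F,\sigma)$ are normal (since $\LB\subseteq\NTc$, cf.\ \eqref{LB-eq}) yields $(E,\tau)\cong(F,\sigma)\in\LBr$.

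The only mild obstacle is making the inclusion $\LBb\subseteq\CBorn$ precise, since this is what lets hypothesis (iii) be channeled into the machinery of Theorem \ref{t:bLB=rangeInLBb}; once Fact \ref{Facts}(vii) is quoted this too becomes routine, and the remainder of the proof is just a reshuffling of Definition \ref{DFN-LB}, the inclusions of \eqref{DIAG}, and the argument already given for \eqref{eq:bLB=LBRangeinCBorn}.
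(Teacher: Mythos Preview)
Your proposal is correct and follows essentially the same route as the paper: the paper records (i)\,$\Leftrightarrow$\,(ii) ``by definition'', (ii)\,$\Rightarrow$\,(iii)\,$\Rightarrow$\,(iv) as straightforward, and closes the loop via (iv)\,$\Rightarrow$\,(ii) using Theorem~\ref{t:bLB=rangeInLBb}. Your version is in fact slightly more careful, since you spell out why (ii)\,$\Rightarrow$\,(i) requires the normality of LB-spaces (which the paper hides behind ``by definition''), and you make the inclusion $\LBb\subseteq\CBorn$ explicit.
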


\begin{proof}
    (i)\hspace{1pt}$\Longleftrightarrow$\hspace{1pt}(ii) holds by definition, while (ii)\hspace{1pt}$\Longrightarrow$\hspace{1pt}(iii)\hspace{1pt}$\Longrightarrow$\hspace{1pt}(iv) is straightforward. Finally, the implication (iv)\hspace{1pt}$\Longrightarrow $\hspace{1pt}(ii) follows from Theorem \ref{t:bLB=rangeInLBb}.
\end{proof}

We point out that several well-known classes of bornological spaces are contained in $\bLB$.

\begin{ex}\label{EX-NUC}
$(E, \tau) \in \LB$ is called an $\text{LS}_{\text{\rm w}}$\emph{-space}, if $(E, \tau) = \colim_{n \in \NN} (E_n, \|\cdot\|_{E_n})$ is such that
    \begin{equation}
        \label{eq:NuclearCond}
        \forall\:n \in \NN\; \exists\:m \geqslant n \colon (E_n, \|\cdot\|_{E_n}) \hookrightarrow (E_m, \|\cdot\|_{E_m}) \text{ is weakly compact}. 
    \end{equation}        
By Lemma \ref{l:GrothendieckFactorization}(i), this definition is independent of the defining sequence of Banach spaces for $(E, \tau)$.
Each $\text{LS}_{\text{\rm w}}$-space $(E, \tau)$ is an element of $\LBr$ \cite[Cor 8.5.25(i)]{BPC}.

\smallskip

Similarly, we call an element $(E, \mathcal{B}) \in \LBb$ a \emph{bornological} $\text{LS}_{\text{\rm w}}$\emph{-space} if we have $(E, \mathcal{B}) = \colim_{n \in \NN} (E_n, \|\cdot\|_{E_b})^{\bor}$ for Banach spaces satisfying \eqref{eq:NuclearCond}.
Clearly, if $(E, \mathcal{B}) \in \LBb$ is a bornological $\text{LS}_{\text{\rm w}}$-space, then $(E, \mathcal{B})^{\topp}$ is an $\text{LS}_{\text{\rm w}}$-space, and in particular regular.
Therefore, $(E, \mathcal{B}) = (E, \mathcal{B})^{\topp\hspace{-1pt}\bor}$.
Thus $(E, \mathcal{B}) \in \bLB$.

\smallskip

Particular examples in $\bLB$ are then the \emph{nuclear bornological }LB\emph{-spaces}, where in \eqref{eq:NuclearCond} we replace `weakly compact' with `trace-class'; they are precisely the bornologifications of nuclear LB-spaces. The latter are, depending on the literature, called LN-spaces, DFN-spaces \cite[p.~62]{Klaus} or DNF-spaces \cite[Section 8]{Scholze}.
\hfill\diam{}
\end{ex}

After this positive example, let us illustrate that in \eqref{eq:bLB=rangeInLBb} objects of $\bLB$ can very well be reached by locally convex spaces that do \emph{not} belong to $\LB$.

\begin{ex}\label{EX-LAST}There exists $(E, \tau) \in \Tc \setminus \LB$ such that $(E, \tau)^{\bor} \in \bLB$.
\end{ex}

\begin{proof}For any Fr\'echet space $(F, \mu)$ with a fundamental increasing sequence of norms $(\|\cdot\|_n)_{n \in \NN}$, we write
\[
B'_n = \bigl\{ f' \in F^\prime\:\big|\: |\langle f', f \rangle| \leqslant 1 \text{ for all } f \in F \text{ with } \|f\|_n \leqslant 1\bigr\}
\]
for $n\in\NN$. Consider $(F^\prime, b(F^\prime, F)) \in \Tc$, where $b(F^\prime, F)$ denotes the topology on $F^\prime$ of uniform convergence on bounded sets of $(F, \mu)$. Then, by \cite[Lem 25.5]{MV1997}, each $B'_n$ is a Banach disk of $(F^\prime, b(F^\prime, F))^{\bor}$ and we have
\[
(F^\prime, \mathcal{B}) := (F^\prime, b(F^\prime, F))^{\bor} = \colim_{n \in \NN} ((F^\prime)_{B'_n}, \|\cdot\|_{B'_n})^{\bor} \in \LBb .
\]
Hence $(F^\prime, \mathcal{B}) \in \bLB$ by Theorem \ref{t:bLB=rangeInLBb}.
 
\smallskip
   
But, if $F$ is not distinguished \cite[Dfn on p.~300]{MV1997}, that is, if $(F^\prime, b(F^\prime, F)) \notin \NTc$, then in particular $(F^\prime, b(F^\prime, F)) \notin \LB$. Thus, any such $(F, \mu)$ yields an example $(E,\tau)=(F',b(F',F))$ with the properties claimed; concrete $(F, \mu)$ can be found in \cite[Ex 27.19]{MV1997}.
\end{proof}

\section{Idempotent adjunctions and consequences}\label{SEC:4}

Many of the properties of Fact \ref{Facts} can be proven completely abstractly using the theory of idempotent adjunctions. Moreover, coupled with Theorem \ref{t:CharRegularity}, this interpretation allows us to prove some useful properties of the category $\bLB$. As will be clear, some of the results below have appeared in the analysis literature before. However, where possible, we give largely categorical proofs that will further generalise to categories of bornological modules defined over Banach rings (and in fact to many more abstract settings). Such categories have recently been used in developing the theory of derived analytic geometry in \cite{ben2024perspective, KKM, kelly2025localising}. The analytic and categorical properties of these bornological modules will be further elaborated upon in forthcoming work of the first author in \cite{Kellyconvenient}, in which a generalisation of the bornologification/topologification functors will also be discussed. 

\smallskip

There are many different ways to characterise what it means for an adjunction

\begin{equation}\label{eq:idempadjunction}
\begin{tikzcd}
R\hspace{2pt}\hspace{2pt}\colon \ensuremath{\text{\bf\textsf{D}}} \arrow[r, shift left=0.75ex]  &\arrow[l, shift right=-0.75ex, ]\ensuremath{\text{\bf\textsf{C}}}\hspace{2pt}\colon L,
\end{tikzcd}
\end{equation}
where $L$ is the left adjoint and $R$ the right adjoint, to be idempotent. For a full list, one can consult \cite[Prop 2.28]{MR667399} or \cite[Thm 3.8.7]{grandis2021category}. For the purposes of the present note, the most useful characterising property is that, for any object $D$ of $\ensuremath{\text{\bf\textsf{D}}}$, the component of the unit on $R(D)$,
$$\eta_{R(D)}:R(D)\rightarrow RLR(D)$$
is an isomorphism. 

Recall that a morphism in a category is called a \textit{regular monomorphism} if it is the equaliser of a parallel pair of morphisms \cite[Dfn 16.13]{MR2377903}.

\begin{prop}\label{prop:adjidempepi}
An adjunction as in \eqref{eq:idempadjunction} is idempotent if and only if for any object $D$ of $\ensuremath{\text{\bf\textsf{D}}}$ the morphism
$\eta_{R(D)}:R(D)\rightarrow RLR(D)$
is an epimorphism.
\end{prop}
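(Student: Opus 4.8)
The plan is to prove both directions, noting that one implication is trivial. Since every isomorphism is an epimorphism, the characterising property recalled just before the statement — namely that $\eta_{R(D)}\colon R(D)\to RLR(D)$ is an isomorphism for every object $D$ of $\ensuremath{\text{\bf\textsf{D}}}$ — immediately gives the forward implication. So the content is entirely in the converse: assuming $\eta_{R(D)}$ is an epimorphism for all $D$, I must deduce that the adjunction is idempotent, and by the cited characterisation it suffices to show $\eta_{R(D)}$ is in fact an isomorphism.

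The key observation is a standard triangle identity argument. For any object $C$ of $\ensuremath{\text{\bf\textsf{C}}}$ we have the triangle identity $\varepsilon_{L(C)}\circ L(\eta_C)=\id_{L(C)}$, where $\varepsilon$ is the counit. Dually, for $D$ in $\ensuremath{\text{\bf\textsf{D}}}$ the identity $R(\varepsilon_D)\circ\eta_{R(D)}=\id_{R(D)}$ holds. Apply this with $D$ replaced by $LR(D)$: we get $R(\varepsilon_{LR(D)})\circ\eta_{RLR(D)}=\id_{RLR(D)}$. On the other hand, naturality of $\eta$ applied to the morphism $R(\varepsilon_D)\colon RLR(D)\to R(D)$ gives $\eta_{R(D)}\circ R(\varepsilon_D)=RLR(\varepsilon_D)\circ\eta_{RLR(D)}$. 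Combining these relations, together with the functoriality identity $R(\varepsilon_D)\circ R(\varepsilon_{LR(D)}) = R(\varepsilon_D)\circ RLR(\varepsilon_D)$ coming from the counit triangle identity $\varepsilon_D\circ \varepsilon_{LR(D)}=\varepsilon_D\circ LR(\varepsilon_D)$ under $R$, one shows that $\eta_{R(D)}\circ R(\varepsilon_D)$ and $\id_{RLR(D)}$ become equal after precomposition with the epimorphism $\eta_{R(D)}$. Hence $\eta_{R(D)}\circ R(\varepsilon_D)=\id_{RLR(D)}$. Together with $R(\varepsilon_D)\circ\eta_{R(D)}=\id_{R(D)}$, this exhibits $\eta_{R(D)}$ as a two-sided inverse to $R(\varepsilon_D)$, so $\eta_{R(D)}$ is an isomorphism, and the adjunction is idempotent.

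Concretely, I would organise the converse as follows. First, record the two triangle identities and the naturality square for $\eta$ at $R(\varepsilon_D)$. Second, derive from the counit triangle identity (applied to $D$ and pushed through $R$) the relation between $R(\varepsilon_{LR(D)})$ and $RLR(\varepsilon_D)$ as maps $RLRLR(D)\to RLR(D)$. Third, assemble these to conclude that $\bigl(\eta_{R(D)}\circ R(\varepsilon_D)\bigr)\circ\eta_{R(D)}=\eta_{R(D)}=\id_{RLR(D)}\circ\,\eta_{R(D)}$, whence $\eta_{R(D)}\circ R(\varepsilon_D)=\id_{RLR(D)}$ because $\eta_{R(D)}$ is epic. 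Fourth, invoke $R(\varepsilon_D)\circ\eta_{R(D)}=\id_{R(D)}$ to finish. Finally, cite \cite[Prop 2.28]{MR667399} to pass from ``$\eta_{R(D)}$ is an isomorphism for all $D$'' back to idempotency of the adjunction.

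The main obstacle is purely bookkeeping: getting the diagram chase exactly right so that the epimorphism hypothesis can be cancelled on the correct side, and making sure every composite is formed between the intended objects (the tower $R(D)$, $RLR(D)$, $RLRLR(D)$). There is no conceptual difficulty — it is the classical fact that an adjunction is idempotent iff the units $\eta_{R(D)}$ split appropriately — but care is needed to state the naturality squares and triangle identities for the correct components and not to silently use the (as yet unproven) fact that $\eta_{R(D)}$ is monic.
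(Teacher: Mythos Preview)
Your proof is correct and follows essentially the same idea as the paper: the triangle identity $R(\varepsilon_D)\circ\eta_{R(D)}=\id_{R(D)}$ makes $\eta_{R(D)}$ a split monomorphism, and a split mono that is also epic is an isomorphism. The paper phrases this last step by noting that split monos are regular monos and citing the textbook fact that a regular mono which is epic is an isomorphism; you instead spell out the cancellation argument directly. Note, however, that the machinery you invoke in your second paragraph (naturality of $\eta$ at $R(\varepsilon_D)$, the triangle identity at $LR(D)$, the counit naturality square) is entirely unnecessary: as your third paragraph already makes plain, $(\eta_{R(D)}\circ R(\varepsilon_D))\circ\eta_{R(D)}=\eta_{R(D)}\circ\id_{R(D)}=\eta_{R(D)}$ follows immediately from associativity and the single triangle identity, after which the epi hypothesis cancels $\eta_{R(D)}$ on the right. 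You can safely delete the second paragraph and keep only the clean argument of the third.
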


\begin{proof}
   By standard properties of adjunctions, we have $\id_{R(D)}=R(\epsilon_{D})\circ \eta_{R(D)}$, where $\epsilon$ is the counit of the adjunction. In particular, $\eta_{R(D)}$ is a split, and hence regular, monomorphism, see \cite[Prop 16.15]{MR2377903}. Since it is also an epimorphism by assumption, it is an isomorphism by \cite[Prop 16.16]{MR2377903}.
\end{proof}

Under the stronger assumption that $\eta_{C}$ is an epimorphism for all $C\in\ensuremath{\text{\bf\textsf{C}}}$, the above result is proven in \cite[Section 2-C]{porst1991concrete}.

\smallskip

Suppose our adjunction \eqref{eq:idempadjunction} is idempotent. Let $\ensuremath{\text{\bf\textsf{C}}}^{\eta}$ denote the full subcategory of $\ensuremath{\text{\bf\textsf{C}}}$ consisting of those objects $C$ for which the unit
$$\eta_{C}:C\rightarrow R L(C)$$
is an isomorphism. Similarly, let $\ensuremath{\text{\bf\textsf{D}}}^{\epsilon}$ denote the full subcategory of $\ensuremath{\text{\bf\textsf{D}}}$ consisting of those objects $D$ for which the counit $\epsilon_{D}:LR(D)\rightarrow D$ is an isomorphism. By  \cite[Thm 3.8.7]{grandis2021category}, the adjunction restricts to an adjoint equivalence
\begin{equation*}
    \begin{tikzcd}
R\hspace{2pt}\hspace{2pt}\colon\ensuremath{\text{\bf\textsf{D}}}^{\epsilon} \arrow[r, shift left=0.75ex]  &\arrow[l, shift right=-0.75ex, ]\ensuremath{\text{\bf\textsf{C}}}^{\eta}\hspace{2pt}\colon L.
\end{tikzcd}
\end{equation*}

Also by \cite[Thm 3.8.7]{grandis2021category}, the inclusion $\ensuremath{\text{\bf\textsf{C}}}^{\eta}\rightarrow\ensuremath{\text{\bf\textsf{C}}}$ has a left adjoint given by $C\mapsto RL(C)$, and the inclusion $\ensuremath{\text{\bf\textsf{D}}}^{\epsilon}\rightarrow\ensuremath{\text{\bf\textsf{D}}}$ has a right adjoint given by $D\mapsto LR(D)$. Consequently, $\ensuremath{\text{\bf\textsf{C}}}^{\eta}$ is closed under limits in $\ensuremath{\text{\bf\textsf{C}}}$, and $\ensuremath{\text{\bf\textsf{D}}}^{\epsilon}$ is closed under colimits in $\ensuremath{\text{\bf\textsf{D}}}$. 

\begin{ex}\label{ex:idempotentbt}
In \cite[Rmk 2.1.11]{MR961256}, Fr\"{o}licher and Kriegl consider the following specific case of Proposition \ref{prop:adjidempepi}. Suppose we have an adjunction
\begin{equation*}
    \begin{tikzcd}
R\hspace{2pt}\colon \ensuremath{\text{\bf\textsf{D}}} \arrow[r, shift left=0.75ex]  &\arrow[l, shift right=-0.75ex, ]\ensuremath{\text{\bf\textsf{C}}}\hspace{2pt}\colon L,
\end{tikzcd}
\end{equation*}
between
\textit{concrete} categories in which $L$ and $R$ both preserve the underlying sets and underlying maps. If the underlying maps of the components of unit and counit are the identity on underlying sets, then \cite[Rmk 2.1.11]{MR961256} states that the adjunction is idempotent. This claim is clearly implied by Proposition \ref{prop:adjidempepi}.
For example, let $k\in\{\mathbb{R},\mathbb{C}\}$ (although the following also works for any non-trivially valued Banach field, including non-Archimedean ones). Consider the pair of functors \eqref{FUN} from Section \ref{SEC-2}: \begin{equation*}
\begin{tikzcd}
(\,\cdot\,)^{\bor}\hspace{2pt}\colon \Tc \arrow[r, shift left=0.75ex]  &\arrow[l, shift right=-0.75ex] \Born\hspace{2pt}\colon (\,\cdot\,)^{\topp}.
\end{tikzcd}
\end{equation*}
As stated above, \cite[Prop 2.1.10]{MR961256} says that this pair of functors form an adjoint pair. Let $(E,\tau) \in \Tc$. Then the underlying vector spaces of $(E,\tau)^{\bor}$ and $(E,\tau)^{\bor\hspace{-1pt}\topp\hspace{-1pt}\bor}$ are both still $E$, and on underlying vector spaces the morphism 
$$(E,\tau)^{\bor}\rightarrow(E,\tau)^{\bor\hspace{-1pt}\topp\hspace{-1pt}\bor}$$
is just the identity. The category $\Born^{\eta}$ is exactly $\NBorn$, and the category $\Tc^{\epsilon}$ is $\NTc$.\hfill\diam{}
\end{ex}

\begin{rmk}\label{rmk:NLBRLB}
As seen in Theorem \ref{t:bLb=LBb+NBorn}, we have $\bLB = \LBb \cap \NBorn$.
Also, any $(E, \tau) \in \LBr$ is of the form $(E, \mathcal{B})^{\topp}$ for $(E, \mathcal{B}) \in \bLB$, since $(E, \tau) = (E, \tau)^{\bor\hspace{-1pt}\topp}$.
Consequently, the equivalence $\NTc\cong\NBorn$ restricts to an equivalence $\bLB\cong\LBr$.
\end{rmk}

Let us now consider the question of subobject stability of the category $\ensuremath{\text{\bf\textsf{C}}}^{\eta}$ in the context of an idempotent adjunction. Later, we will use our results to deduce that $\bLB$ is stable by strict and bornologically closed subobjects in $\Born$. 
 
\smallskip
Let $\mathcal{S}$ be a class of morphisms in a category $\ensuremath{\text{\bf\textsf{C}}}$ that contains all isomorphisms and is closed under taking compositions. The class $\mathcal{S}$ is said to \textit{satisfy the left (resp.\ right) cancellation property} if whenever $g\circ f$ is a map in $\mathcal{S}$ then $f$ (resp.\ $g$) is in $\mathcal{S}$. Typically we will require that $\mathcal{S}$ consists of regular monomorphisms in the left cancellation case.

\begin{ex}\label{ex:leftcancelexact}
    For example, let $\ensuremath{\text{\bf\textsf{E}}}$ be a weakly idempotent complete exact category (with the specified class of short exact sequences being implicit). Let $\ensuremath{\text{\bf\textsf{AdMon}}}$ denote the class of admissible monomorphisms for the exact structure. Such morphisms are by definition kernels, so they are regular. Moreover, by the definition of an exact category, $\ensuremath{\text{\bf\textsf{AdMon}}}$ is closed under composition. By the Obscure Axiom \cite[Prop 2.16]{Buehler}, $\ensuremath{\text{\bf\textsf{AdMon}}}$ satisfies the left cancellation property. \hfill\diam{}
\end{ex}

\begin{prop}\label{prop:cancelsub}
Let
\begin{equation*}
    \begin{tikzcd}
R\hspace{2pt}\colon \ensuremath{\text{\bf\textsf{D}}} \arrow[r, shift left=0.75ex]  &\arrow[l, shift right=-0.75ex, ]\ensuremath{\text{\bf\textsf{C}}}\hspace{2pt}\colon L
\end{tikzcd}
\end{equation*}
be an idempotent adjunction such that for any $C\in\ensuremath{\text{\bf\textsf{C}}}$ the map $\eta_{C}:C\rightarrow RL(C)$ is an epimorphism. Let $\mathcal{S}$ be a class of regular monomorphisms that contains the class of isomorphisms, satisfies the left cancellation property, and is stable by composition. If $i:C\rightarrow D$ is in $\mathcal{S}$ and $D\in\ensuremath{\text{\bf\textsf{C}}}^{\eta}$ then $C\in\ensuremath{\text{\bf\textsf{C}}}^{\eta}$.
\end{prop}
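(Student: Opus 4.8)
The plan is to prove directly that the unit component $\eta_{C}\colon C\rightarrow RL(C)$ is an isomorphism. By hypothesis it is an epimorphism, so by \cite[Prop 16.16]{MR2377903} it will suffice to show that it is a regular monomorphism; this is the same concluding step as in the proof of Proposition \ref{prop:adjidempepi}, except that now regularity of $\eta_{C}$ has to be extracted from the class $\mathcal{S}$ rather than from a triangle identity.

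First I would record the naturality square of the unit $\eta\colon\id_{\ensuremath{\text{\bf\textsf{C}}}}\Rightarrow RL$ at the given morphism $i\colon C\rightarrow D$, namely
\[
RL(i)\circ\eta_{C}=\eta_{D}\circ i .
\]
Since $D\in\ensuremath{\text{\bf\textsf{C}}}^{\eta}$, the morphism $\eta_{D}$ is an isomorphism and hence lies in $\mathcal{S}$; as $i\in\mathcal{S}$ and $\mathcal{S}$ is stable under composition, the composite $\eta_{D}\circ i$ lies in $\mathcal{S}$, and therefore so does $RL(i)\circ\eta_{C}$. Applying the left cancellation property of $\mathcal{S}$ to this factorisation, with $g=RL(i)$ and $f=\eta_{C}$, yields $\eta_{C}\in\mathcal{S}$. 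Because every element of $\mathcal{S}$ is a regular monomorphism, $\eta_{C}$ is a regular monomorphism.

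Combining this with the assumption that $\eta_{C}$ is an epimorphism, we conclude via \cite[Prop 16.16]{MR2377903} that $\eta_{C}$ is an isomorphism, that is, $C\in\ensuremath{\text{\bf\textsf{C}}}^{\eta}$, which is exactly the claim.

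I do not anticipate a genuine obstacle here: the argument is a short diagram chase, and its only conceptual content is the observation that left cancellation along the $\eta$-naturality square is precisely the device that propagates regular monicity of the unit from $D$ down to its subobject $C$. The two points to handle with a little care are that one really does need the implication ``epic regular monomorphism $\Rightarrow$ isomorphism'' (a plain epic monomorphism would not suffice), and that the hypothesis ``$\eta_{C}$ is epic for every $C\in\ensuremath{\text{\bf\textsf{C}}}$'' — which is strictly stronger than idempotence (the latter only requiring this for objects of the form $R(D)$, cf.\ Proposition \ref{prop:adjidempepi}) but is explicitly imposed in the statement — is what makes the conclusion go through.
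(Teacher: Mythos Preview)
Your argument is correct and essentially identical to the paper's own proof: both use the naturality square $RL(i)\circ\eta_{C}=\eta_{D}\circ i$, observe that the right-hand side lies in $\mathcal{S}$ since $\eta_{D}$ is an isomorphism and $i\in\mathcal{S}$, apply left cancellation to get $\eta_{C}\in\mathcal{S}$, and then conclude via the ``epic regular monomorphism $\Rightarrow$ isomorphism'' step. Your additional commentary on why the stronger hypothesis on $\eta_{C}$ is needed is accurate and a nice clarifying remark.
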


\begin{proof}
Consider the commutative diagram

\[
\begin{tikzcd}
C \arrow{r}{\eta_{C}} \arrow[swap]{d}{i} & RL(C) \arrow{d}{RL(i)} \\
D  \arrow{r}{\eta_{D}} & RL(D)
\end{tikzcd}
\]
Since $D$ is in $\ensuremath{\text{\bf\textsf{C}}}^{\eta}$, $\eta_{D}$ is an isomorphism and so is in $\mathcal{S}$. The map $i$ is in $\mathcal{S}$ by assumption. Hence, the composition of the left-hand map and the bottom map is in $\mathcal{S}$. By the left cancellation property, $\eta_{C}$ is in $\mathcal{S}$. In particular, it is a regular monomorphism. By assumption, it is an epimorphism. Therefore it is an isomorphism.
\end{proof}

The category $\Born$ is quasi-abelian. Equivalently (or indeed by definition), this means that the class of all kernel-cokernel pairs defines an exact structure on $\Born$. In the context of quasi-abelian exact structures, admissible monomorphisms are often called \textit{strict monomorphisms}. In the category $\Born$, a morphism $i:(E,\mathcal{B})\rightarrow (F,\mathcal{C})$ is a strict monomorphism, precisely if it is injective, and $\mathcal{B}=\{i^{-1}(C)\:|\:C\in\mathcal{C}\}$. In other words, $i$ induces an isomorphism between $(E,\mathcal{B})$ and $(i(E),\mathcal{C}\cap i(E))$. See \cite{PS00} for details here.

\smallskip

Recall from \cite[Dfns 3.1--3.3]{BBBK18} that  a sequence $(x_{n})_{n\in\mathbb{N}}$ in a bvs $(E,\mathcal{B})$ is said to \textit{converge to }$x\in E$ \textit{in the sense of Mackey}, if
$$
\exists\:B\in\mathcal{B}\;\forall\:\lambda\in k^{\times}\;\exists\:n_0\in\NN\;\forall\:n\geqslant n_0\colon x_{n}-x\in\lambda B
$$
A subset $U\subseteq E$ is said to be \textit{bornologically closed} if whenever a sequence of elements $(x_{n})_{n \in \NN}$ of $U$ converges to an element $x$ of $E$ in the sense of Mackey, then $x\in U$. There is also the notion of a \textit{Cauchy-Mackey sequence}. A bvs is then called \textit{semi-complete}, if every Cauchy-Mackey sequence is convergent in the sense of Mackey. As mentioned in \cite[after Dfn 3.3]{BBBK18}, complete bvs are semi-complete, but the converse in general does not hold.

\smallskip

\label{PRES}According to Fact \ref{Facts}(vii) the category $\Born$ is equivalent to $\ensuremath{\text{\bf\textsf{Ind(SNrm)}}}$. We call an isomorphism $(E,\mathcal{B})\cong\colim_{i\in I}(E_{i},\|\cdot\|_{i})^{\bor}$, with each $(E_{i},\|\cdot\|_{i})$ being a semi-normed space and each linking map $(E_{i},\|\cdot\|_{i})^{\bor}\rightarrow (E_{i'},\|\cdot\|_{i'})^{\bor}$ in the diagram being a monomorphism, a \textit{presentation of} $(E,\mathcal{B})$. Note that in such a presentation, we have an isomorphism of underlying vector spaces $E\cong\colim_{i\in I}E_{i}$.


\smallskip

Let $f:(E,\mathcal{B})\rightarrow (F,\mathcal{C})$ be a morphism of bornological spaces, and let $U\subseteq F$ be bornologically closed. Then $f^{-1}(U)$ is bornologically closed in $E$ by \cite[Rmk 2.11(3)]{HN}.


\begin{prop}\label{prop:convmackeyequiv}
    Let $(E,\mathcal{B})\cong\colim_{i\in I}(E_{i},\|\cdot\|_{i})^{\bor}$ be a presentation of a bvs. We identify each $E_{i}$ with its range in $E$. Let $(x_{m})_{m\in\NN}$ be a sequence in $(E,\mathcal{B})$. Then $(x_{m})_{m\in\NN}$ converges to some $x\in E$ in the sense of Mackey if and only if all elements $x_{m}$ and $x$ are contained in some $E_{i}$, and $(x_{m})_{m\in\NN}$ converges to $x$ in norm in $(E_{i},\|\cdot\|_{i})$. 
\end{prop}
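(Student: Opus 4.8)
The plan is to prove both implications using the structure of $(E,\mathcal{B})$ as a filtered colimit of semi-normed spaces in $\Born$, for which the bornology $\mathcal{B}$ is precisely the set of subsets $B\subseteq E$ that are contained in (the image of) some $E_i$ and bounded there with respect to $\|\cdot\|_i$ — this is the standard description of a colimit bornology along monomorphisms, cf.\ the remarks around Fact \ref{Facts}(vii) and \cite[Prop 2.1]{PS00}. I will use that $I$ is a filtered index category, so any finite family of indices admits a common upper bound.

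For the \emph{if} direction, suppose all $x_m$ and $x$ lie in some fixed $E_i$ and $x_m\to x$ in the norm $\|\cdot\|_i$. Then $B := \{x_m - x \mid m\in\NN\}\cup\{0\}$ is a norm-bounded subset of $E_i$, hence $B\in\mathcal{B}$, and for each $\lambda\in k^{\times}$ the norm convergence $\|x_m-x\|_i\to 0$ gives some $n_0$ with $x_m-x\in\lambda B$ for all $n\geqslant n_0$ — actually $x_m - x \in |\lambda|\,\{y \in E_i : \|y\|_i \le 1\}$ eventually, and this disk is a rescaling of a bounded disk; one tidies this up by taking $B$ to be the closed unit disk of $E_i$ scaled so it contains all the $x_m-x$, which is still in $\mathcal{B}$. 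So $x_m\to x$ in the sense of Mackey with witness $B$.

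For the \emph{only if} direction, suppose $x_m\to x$ in the sense of Mackey with witnessing set $B\in\mathcal{B}$. By the colimit description of $\mathcal{B}$, there is an index $i$ with $B\subseteq E_i$ and $B$ bounded in $(E_i,\|\cdot\|_i)$. Taking $\lambda = 1$ (or any fixed scalar) in the Mackey condition yields $n_0$ with $x_m - x\in B\subseteq E_i$ for all $m\geqslant n_0$; in particular the tail $(x_m)_{m\geqslant n_0}$ lies in the affine subspace $x + E_i$. The remaining finitely many terms $x_1,\dots,x_{n_0-1}$ each lie in some $E_{j}$, so by filteredness of $I$ we may enlarge $i$ to a common index (still called $i$) containing $x_1,\dots,x_{n_0-1}$ as well as the set $B$; monomorphicity of the linking maps ensures $B$ remains bounded in the larger step. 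Then also $x = x_{n_0} - (x_{n_0}-x) \in E_i$, so all $x_m$ and $x$ lie in $E_i$. Finally, for norm convergence: fix $\varepsilon>0$ and apply the Mackey condition with $\lambda$ chosen so that $|\lambda|\sup_{b\in B}\|b\|_i \leqslant \varepsilon$ — since $B$ is $\|\cdot\|_i$-bounded this is possible — obtaining $n_1$ with $x_m-x\in\lambda B$, hence $\|x_m-x\|_i\leqslant |\lambda|\sup_{b\in B}\|b\|_i\leqslant\varepsilon$ for $m\geqslant n_1$. Thus $x_m\to x$ in $(E_i,\|\cdot\|_i)$.

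The main obstacle is purely bookkeeping rather than conceptual: one must be careful that the witnessing bounded set $B$ for the Mackey convergence, after passing to a step $E_i$ large enough to contain the finitely many ``early'' terms of the sequence, is still bounded there — this is exactly where monomorphicity of the linking maps in the presentation is used, so that the bornology on $E_i$ restricts from $E$ and boundedness is not lost when enlarging the index. Everything else is a routine translation between the $\lambda$-quantified Mackey condition and $\varepsilon$-norm estimates.
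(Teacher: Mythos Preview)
Your argument is correct in substance and takes a more elementary route than the paper. The paper reduces both directions to cited facts from \cite{HN}: for the forward direction it invokes \cite[Prop~1.4(1)]{HN}, which packages Mackey convergence as norm convergence in some gauge space $E_B$, and then embeds $E_B$ into a step $E_i$ via a ball inclusion; for the converse it quotes \cite[Prop~1.4(3)]{HN}. You instead unpack the definitions directly, using only the explicit description of the colimit bornology and filteredness of $I$. Your approach is more self-contained and makes transparent exactly where each hypothesis is used; the paper's is shorter on the page but relies on an outside reference.

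One small bookkeeping slip in your ``only if'' direction: after enlarging $i$ to contain $x_1,\dots,x_{n_0-1}$ and $B$, you write $x = x_{n_0} - (x_{n_0}-x)\in E_i$, but you have not yet shown $x_{n_0}\in E_i$ --- you only know $x_{n_0}-x\in B\subseteq E_i$. The fix is trivial: when you enlarge the index using filteredness, also throw in an index $j_0$ with $x\in E_{j_0}$ (such a $j_0$ exists since $E=\bigcup_i E_i$); then $x\in E_i$ directly, and $x_m = x + (x_m-x)\in E_i$ for $m\geqslant n_0$ follows. Also, your remark that ``monomorphicity of the linking maps ensures $B$ remains bounded in the larger step'' slightly misattributes the reason: what you need there is that the linking maps are \emph{bounded} (i.e.\ morphisms in $\Born$), not that they are injective; monomorphicity is what lets you identify everything inside $E$, which you have already done.
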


\begin{proof}
Identifying each $(E_{i},\|\cdot\|_{i})^{\bor}$ with a subspace of $(E,\mathcal{B})$, the set $\{B_{E_{i}}(0,\delta)\:|\:\delta>0,\,i\in I\}$  of open balls forms a basis for the bornology $\mathcal{B}$ \--- that is any bounded subset $B\in\mathcal{B}$ is contained in some open ball of finite positive radius in one of the $(E_{i},\|\cdot\|_{i})$. Without loss of generality, we may assume that $x=0$. 

\smallskip

Suppose that $(x_{m})_{m\in\mathbb{N}}$ converges to $0$ in the sense of Mackey. By \cite[Prop 1.4(1)]{HN}, there exists a bounded disk $B\subset E$ such that $(x_{m})_{m\in\mathbb{N}}$ is contained in the semi-normed space $E_{B}$, and $x_{m}$ converges to $0$ in norm in $E_{B}$. Now, $B\subseteq B_{E_{i}}(0,\delta)$ for some $i$ and some $\delta>0$. Hence $E_{B}\subseteq E_{B_{E_{\scalebox{0.55}{\hspace{1pt}$i$}}}(0,\delta)}=(E_{i},\|\cdot\|_{i})$. Thus $(x_{m})_{m\in\mathbb{N}}$ converges to $0$ in norm in $(E_{i},\|\cdot\|_{i})$.

\smallskip

Conversely, if the sequence $(x_{m})_{m\in\mathbb{N}}$ is contained in some $(E_{i},\|\cdot\|_{i})^{\bor}$ and converges in norm in $(E_{i},\|\cdot\|_{i})$, then by \cite[Prop 1.4(3)]{HN} it converges in the sense of Mackey in $(E_{i},\|\cdot\|_{i})^{\bor}$. Clearly this then implies that it converges in the sense of Mackey in $(E,\mathcal{B})$.
\end{proof}

Implicit in the proof of Proposition \ref{prop:convmackeyequiv} is the following fact, which actually holds for bornologifications of metrisable locally convex spaces by \cite[Prop 1.4(3)]{HN}.

\begin{cor}\label{cor:exbanclosed}
    Let $(E,\|\cdot\|)^{\bor}$ be the bornologification of a semi-normed space $(E,\|\cdot\|)$. A sequence $(x_{m})_{m\in\mathbb{N}}$ converges to $x\in E$ in the sense of Mackey if and only if it converges to $x$ topologically. In particular, a subset $U\subseteq E$ is bornologically closed if and only if it is topologically closed.\hfill\diam{}
\end{cor}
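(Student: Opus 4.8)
The plan is to derive this as the one-step case of Proposition~\ref{prop:convmackeyequiv}. A seminormed space gives tautologically a presentation $(E,\|\cdot\|)^{\bor}=\colim_{i\in I}(E_{i},\|\cdot\|_{i})^{\bor}$ with $I=\{\ast\}$ a singleton, $(E_{\ast},\|\cdot\|_{\ast})=(E,\|\cdot\|)$, and no nonidentity linking maps to check. Feeding this into Proposition~\ref{prop:convmackeyequiv}, the side condition ``all $x_{m}$ and $x$ lie in a common $E_{i}$'' is vacuously satisfied, so the proposition says precisely that $(x_{m})_{m\in\NN}$ converges to $x$ in the sense of Mackey in $(E,\|\cdot\|)^{\bor}$ if and only if $\|x_{m}-x\|\to0$, i.e.\ $(x_{m})_{m\in\NN}$ converges to $x$ topologically in $(E,\|\cdot\|)$. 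Equivalently, one can bypass Proposition~\ref{prop:convmackeyequiv} and argue directly via \cite[Prop 1.4(1) and (3)]{HN}: any bounded disk of $(E,\|\cdot\|)^{\bor}$ sits inside some ball $B_{E}(0,\delta)$, which turns Mackey convergence along that disk into norm convergence, while the closed unit ball of $(E,\|\cdot\|)$ realises every norm-convergent sequence as a Mackey-convergent one.

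For the final assertion, recall that a seminormed space is pseudometrisable, so its topology is sequential: a subset $U\subseteq E$ is topologically closed exactly when it is closed under norm-convergent sequences. By the first part of the corollary this coincides with being closed under Mackey-convergent sequences, which is exactly the definition of $U$ being bornologically closed.

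I expect no real obstacle here. The only subtlety is that $\|\cdot\|$ need only be a seminorm, so $(E,\|\cdot\|)$ may fail to be Hausdorff; but pseudometric spaces still have the property that their closed sets are precisely their sequentially closed sets, so the argument goes through unchanged, and likewise the identification of bounded disks of $(E,\|\cdot\|)^{\bor}$ with the subsets absorbed by the unit ball is insensitive to Hausdorffness.
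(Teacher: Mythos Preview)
Your proposal is correct and matches the paper's intent: the paper does not give an explicit proof but introduces the corollary by saying it is ``implicit in the proof of Proposition~\ref{prop:convmackeyequiv}'' and cites \cite[Prop~1.4(3)]{HN} for the general metrisable case, which is exactly the pair of routes you spell out. Your additional sentence deriving the closed-sets statement via sequentiality of pseudometric spaces is a detail the paper leaves implicit, and your remark that Hausdorffness is not needed is apt.
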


\begin{cor}\label{cor:colimclosed}
    Let $(E,\mathcal{B})\cong\colim_{i\in I}(E_{i},\|\cdot\|_{i})^{\bor}$ and $(F,\mathcal{C})\cong\colim_{i\in I}(F_{i},\|\cdot\|_{i})^{\bor}$ be presentations of bornological spaces. Let $f\colon(E,\mathcal{B})\rightarrow(F,\mathcal{C})$ be a morphism that restricts to bounded morphisms $f_{i}\colon(E_{i},\|\cdot\|_{i})^{\bor}\rightarrow(F_{i},\|\cdot\|_{i})^{\bor}$. If each $f_{i}$ is a strict monomorphism then $f$ is a strict monomorphism. Further, if each $f_{i}$ has closed range then $f$ has closed range.
\end{cor}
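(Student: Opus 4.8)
The plan is to prove the two conclusions separately, in each case transporting the problem to the level of the presentations by means of Corollary \ref{cor:exbanclosed} and Proposition \ref{prop:convmackeyequiv}. Throughout I would identify each $E_{i}$ (resp.\ $F_{i}$) with its image in $E$ (resp.\ $F$); since the linking maps of a presentation are monomorphisms and $I$ is filtered, this yields $E=\bigcup_{i\in I}E_{i}$ and $F=\bigcup_{i\in I}F_{i}$, and all coprojections are monomorphisms. First I would record that $f$ is injective: if $f(x)=0$ with $x\in E_{i}$, then $f_{i}(x)=0$ in $F_{i}$, and $f_{i}$ is injective because it is a strict monomorphism.

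For the first conclusion, using the description of strict monomorphisms in $\Born$ recalled before Proposition \ref{prop:convmackeyequiv}, it remains to prove $\mathcal B=\{f^{-1}(C)\mid C\in\mathcal C\}$. One inclusion is formal: $f$ is bounded and injective, so $B=f^{-1}(f(B))$ with $f(B)\in\mathcal C$ for every $B\in\mathcal B$. For the reverse inclusion it is enough to show $f^{-1}(C)\in\mathcal B$ when $C$ ranges over the open balls $B_{F_{i_{0}}}(0,\delta)$ of the steps, since these form a basis of $\mathcal C$ and $f^{-1}$ is order-preserving. Fix such a $C$. Since $f_{i_{0}}$ is a strict monomorphism, the seminorm $\|\cdot\|_{i_{0}}$ of $E_{i_{0}}$ is equivalent to $x\mapsto\|f_{i_{0}}(x)\|_{F_{i_{0}}}$, so $f_{i_{0}}^{-1}(C)$ is bounded in $(E_{i_{0}},\|\cdot\|_{i_{0}})^{\bor}$ and hence in $(E,\mathcal B)$. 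The remaining, and crucial, task is to see that $f^{-1}(C)$ in fact coincides with $f_{i_{0}}^{-1}(C)$, equivalently that $f^{-1}(C)$ is concentrated in the single step $E_{i_{0}}$, which I would extract from the levelwise compatibility of $f$ with the two presentations. Granting this, $f^{-1}(C)\in\mathcal B$, so $f$ is a strict monomorphism.

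For the second conclusion, I would take a sequence $(y_{m})_{m\in\NN}$ in $f(E)$ converging to $y\in F$ in the sense of Mackey and write $y_{m}=f(x_{m})$, using injectivity. A Mackey-convergent sequence is bounded in $(F,\mathcal C)$, and since $f$ is now known to be a strict monomorphism it reflects bounded sets; hence $\{x_{m}\mid m\in\NN\}\in\mathcal B$, so all $x_{m}$ lie in one step $E_{l}$. By Proposition \ref{prop:convmackeyequiv} there is also a step $F_{j}$ containing all the $y_{m}$ and $y$ with $y_{m}\to y$ in $\|\cdot\|_{j}$. Picking $k\in I$ with $k\geqslant l$ and $k\geqslant j$, the sequence $(y_{m})$ lies in $f_{k}(E_{k})$, converges to $y$ in $\|\cdot\|_{k}$, and $f_{k}(E_{k})$ is $\|\cdot\|_{k}$-closed in $F_{k}$ by hypothesis together with Corollary \ref{cor:exbanclosed}; therefore $y\in f_{k}(E_{k})\subseteq f(E)$. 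Thus $f(E)$ is bornologically closed.

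I expect the main obstacle to be the final step of the second paragraph: boundedness in a presented colimit means containment in a ball of a single step, so one must genuinely localise $f^{-1}(C)$ inside one of the $E_{i}$, and this is exactly the point where the interaction between $f$ and the two chosen presentations has to be handled with care. By contrast, once strictness of $f$ is available, the closed-range assertion is a routine consequence of Proposition \ref{prop:convmackeyequiv} and Corollary \ref{cor:exbanclosed}.
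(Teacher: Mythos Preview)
You are right to flag the localisation step as the obstacle, and in fact it cannot be overcome in the generality stated: the equality $f^{-1}(C)=f_{i_{0}}^{-1}(C)$ can genuinely fail. Take $I=\NN$, let $F_{n}=\ell^{\infty}$ for every $n$ with identity linking maps (so $(F,\mathcal C)=(\ell^{\infty})^{\bor}$), and let $E_{n}=k^{n}$ sit inside $\ell^{\infty}$ as the first $n$ coordinates with the sup norm; then $(E,\mathcal B)$ is $c_{00}$ with the bornology of sets that are contained and bounded in some $k^{n}$. Each $f_{n}\colon k^{n}\hookrightarrow\ell^{\infty}$ is an isometric embedding, hence a strict monomorphism with closed range. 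Yet for $C$ the unit ball of $\ell^{\infty}$ the set $f^{-1}(C)$ contains every standard unit vector and so lies in no single $E_{n}$; thus $f$ is not a strict monomorphism, and $f(E)=c_{00}$ is not bornologically closed in $(\ell^{\infty})^{\bor}$ either (consider $y_{m}=\sum_{j\leqslant m}j^{-1}e_{j}$). Both conclusions therefore fail without an extra hypothesis such as $f^{-1}(F_{i})\subseteq E_{i}$ for each $i\in I$. That hypothesis \emph{does} hold in the only place the corollary is invoked, namely Corollary~\ref{cor:presclosed}, where the presentation of $E$ is obtained by pulling back that of $F$; and under it your argument goes through verbatim, since then $f^{-1}(C)=f_{i_{0}}^{-1}(C)$ is immediate and, for the closed-range part, $y_{m}\in F_{j}$ forces $x_{m}\in E_{j}$ directly, making the appeal to strictness of $f$ unnecessary.

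For comparison, the paper's proof of strictness argues from the other side: it starts with $B\in\mathcal B$, places it in some $E_{i}$, writes $B=f_{i}^{-1}(C)$ by strictness of $f_{i}$, and concludes $B=f^{-1}(C)$. But the inclusion $\mathcal B\subseteq\{f^{-1}(C)\mid C\in\mathcal C\}$ is automatic for any bounded injection (take $C=f(B)$), whereas the substantive inclusion $\{f^{-1}(C)\mid C\in\mathcal C\}\subseteq\mathcal B$ is not addressed; and the asserted equality $B=f^{-1}(C)$ already needs $f^{-1}(C)\subseteq E_{i}$, i.e., precisely the localisation you could not justify. So your proposal and the paper's argument share the same gap; the difference is that you identified it.
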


\begin{proof}
Note that we have isomorphisms of underlying vector spaces $E\cong\colim_{i\in I}E_{i}$ and $F\cong\colim_{i\in I}F_{i}$. By assumption, each $f_{i}\colon E_{i}\rightarrow F_{i}$ is a monomorphism. Since filtered colimits are exact in the category of vector spaces \--- in particular they preserve monomorphisms \--- we find that $f=\colim_{i\in I}f_{i}$ is a monomorphism. 
Let $B\in\mathcal{B}$ be bounded. Then it is contained in some $(E_{i},\|\cdot\|_{i})^{b}$, and is bounded there. Since $f_{i}$ is strict, $B=f_{i}^{-1}(C)$ for some $C$ bounded in $(F_{i},\|\cdot\|_{i})^{b}$. Now, $C\in\mathcal{C}$, and $B=f^{-1}(C)$. Finally, let $(x_{m})_{m\in\NN}$ be a sequence in $(E,\mathcal{B})$ such that $(f(x_m))_{m\in\NN}$ converges to some $y$ in $(F,\mathcal{C})$ in the sense of Mackey. Then by Proposition \ref{prop:convmackeyequiv}, all the $f(x_{m})$ and $y$ are contained in some $F_{i}$, and $(f(x_{m}))_{m\in\NN}$ converges in semi-norm to $y$ in $(F_{i},\|\cdot\|_{i})$. Since $f_{i}\colon(E_{i},\|\cdot\|_{i})^{\bor}\rightarrow (F_{i},\|\cdot\|_{i})^{\bor}$ has bornologically closed range, and hence topologically closed range, we must have $y = f(x)$ for some $x\in E_{i}$. This completes the proof.
\end{proof}

Over the course of the next few results, we are going to prove a partial converse to Corollary \ref{cor:colimclosed}.

\begin{prop}\label{prop:pullback}
 Let $g\colon(E,\mathcal{B})\rightarrow (G,\mathcal{D})$, and $f:(F,\mathcal{C})\rightarrow(G,\mathcal{D})$ be morphisms in $\Born$. If $g$ is a strict monomorphism then so is the pullback map $g':(E,\mathcal{B})\times_{(G,\mathcal{D})}(F,\mathcal{C})\rightarrow (F,\mathcal{C})$. Moreover, if $g$ has closed range then so does $g'$.
\end{prop}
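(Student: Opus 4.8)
The plan is to construct the pullback explicitly as a subobject of the product and then verify the two closure properties step by step. First I would recall that in $\Born$ the pullback $(E,\mathcal{B})\times_{(G,\mathcal{D})}(F,\mathcal{C})$ can be realized as the bornological subspace $P=\{(e,f)\in E\times F\mid g(e)=f'(f)\}$ of the product $(E,\mathcal{B})\times(F,\mathcal{C})$, with $g'$ and $f'$ the restrictions of the two projections; here I write $f$ both for the morphism and a typical element, but in the write-up I will rename the element to avoid the clash. The product bornology is generated by sets $B\times C$ with $B\in\mathcal{B}$, $C\in\mathcal{C}$, and a subset of $P$ is bounded precisely when it is contained in such a box intersected with $P$.

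The key point for strictness is the following: $g$ is a strict monomorphism means $g$ is injective and $\mathcal{B}=\{g^{-1}(D)\mid D\in\mathcal{D}\}$, i.e.\ $g$ induces a bornological isomorphism onto its image equipped with the subspace bornology. I would then check that $g'$ is injective (immediate, since $(e,f)\mapsto f$ is injective on $P$ because $e$ is determined by $f$ via $g(e)=f'(f)$ and $g$ is injective) and that $g'$ is a bornological quotient onto its image, i.e.\ every bounded set of $\operatorname{ran} g'\subseteq(F,\mathcal{C})$ lifts to a bounded set of $P$. Concretely, given $C'\in\mathcal{C}$ with $C'\subseteq\operatorname{ran} g'$, each $f\in C'$ has a unique preimage $(e_f,f)\in P$, and $g(e_f)=f'(f)\in f'(C')\in\mathcal{D}$; since $g$ is strict, the set $\{e_f\mid f\in C'\}=g^{-1}(f'(C'))\cap(\text{that set})$ is bounded in $(E,\mathcal{B})$, whence $\{(e_f,f)\mid f\in C'\}$ is bounded in the product, hence in $P$, and maps onto $C'$. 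That is exactly strictness of $g'$.

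For the closed-range statement I would use the Mackey-convergence description of bornological closedness established in the excerpt. Suppose $(f^{(m)})_{m\in\NN}$ is a sequence in $\operatorname{ran} g'\subseteq F$ converging in the sense of Mackey to some $f\in F$; I must show $f\in\operatorname{ran} g'$. Applying $f'$, the sequence $(f'(f^{(m)}))$ lies in $f'(\operatorname{ran} g')=\operatorname{ran}(f'\circ g')\subseteq\operatorname{ran} g$ and Mackey-converges to $f'(f)$ since morphisms preserve Mackey convergence; because $g$ has closed range, $f'(f)=g(e)$ for some $e\in E$, so $(e,f)$ satisfies the pullback relation, i.e.\ $(e,f)\in P$ and $f=g'(e,f)\in\operatorname{ran} g'$. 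This uses only that $f'$ is a $\Born$-morphism (hence Mackey-continuous) and the hypothesis on $g$.

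I expect the only genuine subtlety to be the strictness verification: one must be careful that the subspace bornology on $\operatorname{ran} g'$ inside $(F,\mathcal{C})$ coincides with the quotient bornology transported along the bijection $g'\colon P\to\operatorname{ran} g'$, and this hinges on the lifting argument above, where the strictness of $g$ is used to pull a bounded set in $\mathcal{D}$ back to a bounded set in $\mathcal{B}$. Everything else \--- injectivity of $g'$, the universal property identifying $P$ as the pullback, and the closed-range argument via Proposition \ref{prop:convmackeyequiv} \--- is routine. A minor bookkeeping point is choosing notation that does not reuse $f$ for both the morphism $f$ and elements of $F$; I will use $y$ (or similar) for elements in the actual write-up.
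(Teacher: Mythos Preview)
Your proof is correct, but it proceeds quite differently from the paper's. The paper argues abstractly: since $\Born$ is quasi-abelian, strict monomorphisms are exactly kernels, and in any additive category with finite limits kernels are stable under pullback, so $g'$ is strict for free. For the closed-range part, the paper first identifies $\operatorname{ran} g'$ with $f^{-1}(g(E))$ and then invokes the fact (stated earlier in the paper, citing \cite[Rmk 2.11(3)]{HN}) that preimages of bornologically closed sets under $\Born$-morphisms are bornologically closed. Your approach instead verifies strictness by hand via the explicit description of the bornology and lifts bounded sets using the strictness of $g$, and handles closedness by tracking a Mackey-convergent sequence through $f$; this is essentially an inline proof of the preimage-of-closed-is-closed fact. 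Your route is more elementary and self-contained; the paper's route is shorter and highlights that both assertions are instances of general categorical stability results, which matters for the generalisation to $\Born_R$ and $\Born(\ensuremath{\text{\bf\textsf{E}}})$ discussed in Remark~\ref{rmk:banring}.

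One minor slip to fix: you describe $g'$ and $f'$ as ``the restrictions of the two projections'', but throughout you actually use $f'$ as the given morphism $f\colon(F,\mathcal{C})\to(G,\mathcal{D})$ (applying it to elements of $F$ and landing in $G$), not as the projection $P\to E$. Just rename the morphism (or the elements of $F$, as you already planned) and drop the misleading clause.
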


\begin{proof}

The underlying set of $(E,\mathcal{B})\times_{(G,\mathcal{D})}(F,\mathcal{C})$ is $E\times_{G}F$. In particular, if the underlying morphism of $g$ is the inclusion of a subset, then the underlying set of the fibre-product is in bijection with $f^{-1}(g(E))$. 

\smallskip

In any additive category with finite limits, kernels are stable by pullback. Thus $g'$ is automatically strict whenever $g$ is.    In particular, $g'$ induces an isomorphism between $(E,\mathcal{B})\times_{(G,\mathcal{D})}(F,\mathcal{C})$ and its range under $g'$, which is $f^{-1}(g(E))$ equipped with the subspace bornology.
 
\smallskip

Finally, as mentioned above, the preimage of a bornologically closed subspace is closed. Hence if $g(E)$ is bornologically closed in $G$, then $f^{-1}(g(E))$ is bornologically closed in $F$.
\end{proof}

\begin{prop}\label{prop:pullbackpres}
    Let $(F, \mathcal{C}) \cong \colim_{i\in I}(F_{i}, \|\cdot\|_{i})^{\bor}$ be a presentation of a bvs $(F,\mathcal{C})$. Let $(E,\mathcal{B})\rightarrow (F,\mathcal{C})$ be any morphism. Then the natural map 
    $$\colim_{i\in I}(E,\mathcal{B})\times_{(F,\mathcal{C})}(F_{i},\|\cdot\|_{i})^{\bor}\rightarrow (E,\mathcal{B})$$
    is an isomorphism, and each linking map $(E,\mathcal{B})\times_{(F,\mathcal{C})}(F_{i},\|\cdot\|_{i})^{\bor}\rightarrow (E,\mathcal{B})\times_{(F,\mathcal{C})}(F_{i'},\|\cdot\|_{i'})^{\bor}$ is a monomorphism.
\end{prop}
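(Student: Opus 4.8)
The plan is to compute the colimit $\colim_{i\in I}P_{i}$ explicitly, on underlying vector spaces and then on bornologies, and to compare it with $(E,\mathcal{B})$. Write $\varphi\colon(E,\mathcal{B})\to(F,\mathcal{C})$ for the given morphism, put $P_{i}:=(E,\mathcal{B})\times_{(F,\mathcal{C})}(F_{i},\|\cdot\|_{i})^{\bor}$, and let $p_{i}\colon P_{i}\to(E,\mathcal{B})$ be the first projection; the $p_{i}$ are compatible with the linking maps $P_{i}\to P_{i'}$ and hence induce the natural morphism $\Phi\colon\colim_{i\in I}P_{i}\to(E,\mathcal{B})$ which we must show is an isomorphism. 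I would use two standard descriptions. First, in $\Born$ the pullback $P_{i}$ has underlying set the set-theoretic fibre product $E\times_{F}F_{i}$ and carries the subspace bornology induced from the product $(E,\mathcal{B})\times(F_{i},\|\cdot\|_{i})^{\bor}$, so that a subset of $P_{i}$ is bounded exactly when both of its projections are bounded. Second, a filtered colimit in $\Born$ has underlying vector space the colimit of the underlying vector spaces, with final bornology generated by the images of the bounded sets of the stages, so that --- using filteredness of $I$ to push a finite union of such images into one single stage and closedness of bornologies under finite unions --- a subset of $\colim_{i}P_{i}$ is bounded if and only if it is contained in the image of a bounded set of one single $P_{j}$.

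First I would settle the underlying vector spaces. Since the given presentation has monomorphic linking maps, we may identify each $F_{i}$ with a subspace of $F$, and then $F=\bigcup_{i\in I}F_{i}$; consequently the underlying space of $P_{i}$ is $\varphi^{-1}(F_{i})$ and the linking map $P_{i}\to P_{i'}$ becomes the inclusion $\varphi^{-1}(F_{i})\hookrightarrow\varphi^{-1}(F_{i'})$. In particular every linking map is injective, hence a monomorphism in $\Born$, because the forgetful functor $\Born\to\mathrm{Vect}$ is faithful and therefore reflects monomorphisms; this proves the second assertion. For the first, since filtered colimits are exact in $\mathrm{Vect}$ they commute with the finite limit $\varphi^{-1}(\,\cdot\,)$, whence on underlying vector spaces $\colim_{i\in I}\varphi^{-1}(F_{i})=\varphi^{-1}\bigl(\colim_{i\in I}F_{i}\bigr)=\varphi^{-1}(F)=E$; thus $\Phi$ is a bijection of underlying vector spaces.

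It then remains to check that $\Phi$ and its set-theoretic inverse are both bounded, equivalently that a subset $B\subseteq E$ is $\mathcal{B}$-bounded if and only if it is bounded in $\colim_{i}P_{i}$. The forward implication is immediate, since each $p_{i}$, and hence $\Phi$, is bounded. Conversely, suppose $B$ is $\mathcal{B}$-bounded; then $\varphi(B)$ is $\mathcal{C}$-bounded in $F=\colim_{j}(F_{j},\|\cdot\|_{j})^{\bor}$, so by the description of the colimit bornology and filteredness of $I$ there are an index $i\in I$ and a $\|\cdot\|_{i}$-bounded set $C_{i}\subseteq F_{i}$ with $\varphi(B)\subseteq C_{i}$. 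In particular $B\subseteq\varphi^{-1}(F_{i})$, and $B$ is bounded in $P_{i}$ since its projection to $(E,\mathcal{B})$ is the $\mathcal{B}$-bounded set $B$ while its projection to $(F_{i},\|\cdot\|_{i})^{\bor}$ is the $\|\cdot\|_{i}$-bounded set $\varphi(B)$; therefore $B$ is bounded in $\colim_{i}P_{i}$, and $\Phi$ is an isomorphism. I do not expect a genuine obstacle here: the whole argument is bookkeeping, and the one place that demands care is keeping the three ambient spaces --- $\colim_{i}P_{i}$, an individual $P_{i}$, and $(E,\mathcal{B})$ --- apart, together with the two uses of the hypothesis that the presentation has monomorphic linking maps, which is exactly what makes $F=\bigcup_{i}F_{i}$ and what makes the underlying set of $P_{i}$ equal to $\varphi^{-1}(F_{i})$.
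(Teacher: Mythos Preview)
Your proof is correct but takes a genuinely different route from the paper's. You compute everything by hand: you identify the underlying vector space of each fibre product $P_i$ with $\varphi^{-1}(F_i)$, describe its bornology explicitly via the two projections, and then verify directly that the colimit bornology on $\bigcup_i\varphi^{-1}(F_i)=E$ coincides with $\mathcal{B}$ by pushing a $\mathcal{B}$-bounded set into some step via the presentation of $(F,\mathcal{C})$. The paper instead argues abstractly: it passes to a $\kappa$-small slice $\Born^{\kappa}\subseteq\ensuremath{\text{\bf\textsf{Ind(SNrm}}}^{\kappa}\ensuremath{\text{\bf\textsf{)}}}$, observes that the latter is locally finitely presentable, and then invokes the general fact that in such categories filtered colimits commute with finite limits to obtain the isomorphism; the monomorphism claim is handled by pullback-stability of monomorphisms and a factorisation trick. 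Your argument is more elementary and entirely self-contained within the concrete description of $\Born$, whereas the paper's argument is designed to port cleanly to the more general settings of $\Born_R$ and $\Born(\ensuremath{\text{\bf\textsf{E}}})$ discussed in the closing remark, where an explicit bornology description may not be available.
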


\begin{proof}
Let $\kappa$ be a cardinal such that each of $E$, $F$, $E_{i}$, and $I$ are $\kappa$-small sets. Let $\Born^{\kappa}\subseteq\Born$ denote the full subcategory consisting of bvs whose underlying set is $\kappa$-small. Then $\Born^{\kappa}$ is closed under $\kappa$-small colimits and finite limits in $\Born$. Thus, we may work in $\Born^{\kappa}$.  Now, $\Born^{\kappa}\subseteq\ensuremath{\text{\bf\textsf{Ind(SNrm}}}^{\kappa}\ensuremath{\text{\bf\textsf{)}}}$ is closed under $\kappa$-small monomorphic colimits, where $\SNrm^{\kappa}$ is the category of semi-normed spaces whose underlying set is $\kappa$-small. 
The category $\SNrm^{\kappa}$ is small and finitely cocomplete. Therefore, $\ensuremath{\text{\bf\textsf{Ind(SNrm}}}^{\kappa}\ensuremath{\text{\bf\textsf{)}}}$ is locally finitely presentable by \cite[Thm 5.5(iv)]{MR1935970}. Consequently, the natural map 
    $$\colim_{i\in I}(E,\mathcal{B})\times_{(F,\mathcal{C})}(F_{i},\|\cdot\|_{i})^{\bor}\rightarrow (E,\mathcal{B})$$
   is an isomorphism in $\ensuremath{\text{\bf\textsf{Ind(SNrm}}}^{\kappa}\ensuremath{\text{\bf\textsf{)}}}$ by \cite[Prop 1.59]{MR1935970}. It therefore suffices to prove that $E\times_{F}F_{i}\rightarrow E\times_{F}F_{j}$ is a monomorphism. Since monomorphisms are stable by pullback, $E\times_{F}F_{i}\rightarrow E$ is a monomorphism. This map factors through $E\times_{F}F_{j}\rightarrow F$. Hence $E\times_{F}F_{i}\rightarrow E\times_{F}F_{j}$ is also a monomorphism.
\end{proof}


\begin{prop}\label{prop:closedb} Let $(F,\mathcal{C})=(F,\|\cdot\|_F)^{\bor}\in\Born$, where $(F,\|\cdot\|_F)$ is a semi-normed space and let $(E,\mathcal{B})\in\Born$ be arbitrary.\vspace{3pt}

\begin{compactitem}
\item[(i)] Let $i\colon(E,\mathcal{B})\rightarrow (F,\|\cdot\|_F)^{\bor}$ be a strict monomorphism. Then $(E,\mathcal{B})$ is isomorphic to $(i(E),\|\cdot\|_{F}|_{i(E)})^{\bor}$. If $(F,\|\cdot\|_F)$ is normed then so is $(i(E),\|\cdot\|_{F}|_{i(E)})$. \vspace{3pt}

\item[(ii)] Assume now that $(F,\|\cdot\|_F)$ is a Banach space. Let $i\colon(E,\mathcal{B})\rightarrow (F,\|\cdot\|_F)^{\bor}$ be a strict monomorphism with bornologically closed range. Then $(E,\mathcal{B})$ is isomorphic to $(i(E),\|\cdot\|_{F}|_{i(E)})^{\bor}$, and $(i(E),\|\cdot\|_{F}|_{i(E)})$ is a Banach space.
\end{compactitem}
   
\end{prop}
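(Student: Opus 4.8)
The plan is to unwind the characterisation of strict monomorphisms in $\Born$ recalled just before Proposition \ref{prop:convmackeyequiv}: a strict monomorphism $i\colon(E,\mathcal{B})\to(F,\mathcal{C})$ is injective and satisfies $\mathcal{B}=\{i^{-1}(C)\mid C\in\mathcal{C}\}$, so that $i$ induces an isomorphism of bvs between $(E,\mathcal{B})$ and $(i(E),\mathcal{C}\cap i(E))$. For part (i), I would therefore identify $E$ with $i(E)\subseteq F$ and simply compute the subspace bornology of $(F,\|\cdot\|_F)^{\bor}$ on $i(E)$: its bounded sets are the intersections with $i(E)$ of the $\|\cdot\|_F$-bounded sets of $F$, and these are precisely the sets bounded for the restricted seminorm $\|\cdot\|_F|_{i(E)}$. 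Hence $(E,\mathcal{B})\cong(i(E),\|\cdot\|_F|_{i(E)})^{\bor}$. The statement about normedness is immediate: if $\|\cdot\|_F$ separates points of $F$, then its restriction separates points of the subspace $i(E)$.

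For part (ii), the extra input is that $i(E)$ is bornologically closed in $(F,\|\cdot\|_F)^{\bor}$; by Corollary \ref{cor:exbanclosed} this is the same as being topologically closed in the seminormed (here: Banach) space $(F,\|\cdot\|_F)$. Since $(F,\|\cdot\|_F)$ is a Banach space and $\|\cdot\|_F$ is a genuine norm, a topologically closed linear subspace of it is again a Banach space under the restricted norm. Combining this with part (i) gives that $(E,\mathcal{B})\cong(i(E),\|\cdot\|_F|_{i(E)})^{\bor}$ with $(i(E),\|\cdot\|_F|_{i(E)})$ a Banach space, as claimed.

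The only point requiring any care \--- and the main (mild) obstacle \--- is to make sure the identification of the subspace bornology is done at the level of bornologies and not merely topologies: one must check that $\mathcal{C}\cap i(E)$, where $\mathcal{C}$ is the von Neumann bornology of $\|\cdot\|_F$, equals the von Neumann bornology of $\|\cdot\|_F|_{i(E)}$, which is straightforward since a subset of $i(E)$ is $\|\cdot\|_F$-bounded in $F$ if and only if it is $\|\cdot\|_F|_{i(E)}$-bounded in $i(E)$. With this observation, everything reduces to the well-known facts that a topologically closed subspace of a Banach space is Banach and that a subspace of a normed space is normed, so no deeper argument is needed. I would also remark that part (ii) is exactly the special case of Corollary \ref{cor:colimclosed} with a one-term presentation, which is consistent with the sentence following that corollary announcing a partial converse.
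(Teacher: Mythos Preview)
Your argument is correct and essentially identical to the paper's own proof: both use the description of strict monomorphisms in $\Born$ (the paper cites \cite[Cor~1.7]{PS00}, you cite the in-text description) to identify $(E,\mathcal{B})$ with $(i(E),\mathcal{C}\cap i(E))$, check by hand that this equals the von Neumann bornology of the restricted seminorm, and for (ii) invoke Corollary~\ref{cor:exbanclosed} plus the fact that closed subspaces of Banach spaces are Banach. One small inaccuracy in your closing remark: part (ii) is not literally the one-term case of Corollary~\ref{cor:colimclosed} (which in that case is tautological), but rather an ingredient in the \emph{converse} direction announced after that corollary and carried out in Corollary~\ref{cor:presclosed}.
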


\begin{proof} (i) As $i$ is a strict monic, \cite[Cor 1.7]{PS00} implies that $i\colon(E,\mathcal{B})\rightarrow(i(E),\mathcal{C}\cap i(E))$ is an isomorphism. Let $B \subseteq i(E)$. Then $B$ is bounded in $(i(E),\mathcal{C}\cap i(E))$ if and only if $B$ is $\|\cdot\|_F$-bounded. 
The second statement is clear.
\smallskip

(ii) By the first part, $(i(E),\|\cdot\|_{F}|_{i(E)})^{\bor}$ is normed. Being  
bornologically closed in $(F,\|\cdot\|_F)^{\bor}$, $i(E)$ is topologically closed in $(F,\|\cdot\|_F)$ by Corollary \ref{cor:exbanclosed}. In particular, $(i(E),\|\cdot\|_{F}|_{i(E)})$ is Banach. 
\end{proof}

\begin{cor}\label{cor:presclosed}
    Let $(F,\mathcal{C})\cong\colim_{i\in I}(F_{i},\|\cdot\|_{i})^{\bor}$ be a presentation of $(F,\mathcal{C})$. Let $(E,\mathcal{B})\subseteq (F,\mathcal{C})$ be a bornological subspace. Then the following hold. \vspace{3pt}
    \begin{compactitem}
        \item[(i)] The isomorphism $(E,\mathcal{B})\cong\colim_{i\in I}(E,\mathcal{B})\times_{(F,\mathcal{C})}(F_{i},\|\cdot\|_{i})^{\bor}$ from Proposition \ref{prop:pullbackpres} is a presentation of $(E,\mathcal{B})$.
        \vspace{3pt} 
        \item[(ii)] The induced morphisms $f_{i}\colon(E,\mathcal{B})\times_{(F,\mathcal{C})}(F_{i},\|\cdot\|_{i})^{\bor}\rightarrow (F_{i},\|\cdot\|_{i})^{\bor}$ are strict monomorphisms.
   \item [(iii)] If $f$ has closed range then so does each $f_{i}$. \vspace{3pt}
    \item[(iv)] If $f$ has closed range and each $(F_{i},\|\cdot\|_{i})$ is Banach, then each $(E,\mathcal{B})\times_{(F,\mathcal{C})}(F_{i},\|\cdot\|_{i})^{\bor}$ is the bornologification of a Banach space. \vspace{3pt}
    \end{compactitem}
In particular, $(E,\mathcal{B})$ is bornologically closed in $(F,\mathcal{C})\cong\colim_{i\in I}(F_{i},\|\cdot\|_{i})^{\bor}$ if and only if there is a presentation $(E,\mathcal{B})\cong\colim_{i\in I}(E_{i},\|\cdot\|_{i})^{\bor}$, such that $f$ restricts to a strict monomorphism $f_{i}:(E_{i},\|\cdot\|_{i})^{\bor}\rightarrow (F_{i},\|\cdot\|_{i})^{\bor}$ with closed range for each $i\in I$. 
\end{cor}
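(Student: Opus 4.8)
Throughout I write $f\colon(E,\mathcal{B})\hookrightarrow(F,\mathcal{C})$ for the inclusion of the bornological subspace, which is a strict monomorphism by the description of strict monomorphisms in $\Born$ recalled before Remark \ref{rmk:NLBRLB}. The plan is simply to feed $f$ into Propositions \ref{prop:pullback}, \ref{prop:pullbackpres}, \ref{prop:closedb} and Corollary \ref{cor:colimclosed}; no genuinely new argument is needed. For fixed $i\in I$ the morphism $f_{i}\colon(E,\mathcal{B})\times_{(F,\mathcal{C})}(F_{i},\|\cdot\|_{i})^{\bor}\to(F_{i},\|\cdot\|_{i})^{\bor}$ is, by construction, the base change of $f$ along the linking morphism $(F_{i},\|\cdot\|_{i})^{\bor}\to(F,\mathcal{C})$, i.e.\ it is the map called $g'$ in Proposition \ref{prop:pullback} when one takes $g=f$ there. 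That proposition therefore gives at once that $f_{i}$ is a strict monomorphism, which is (ii), and that $f_{i}$ has closed range as soon as $f$ does, which is (iii).

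For (i), Proposition \ref{prop:pullbackpres}, applied to the given presentation of $(F,\mathcal{C})$ and to $f$, says precisely that the natural map $\colim_{i\in I}(E,\mathcal{B})\times_{(F,\mathcal{C})}(F_{i},\|\cdot\|_{i})^{\bor}\to(E,\mathcal{B})$ is an isomorphism with monomorphic linking maps; to conclude that it is a presentation it therefore only remains to check that each term is the bornologification of a semi-normed space, and this follows from (ii) together with Proposition \ref{prop:closedb}(i), which identifies $(E,\mathcal{B})\times_{(F,\mathcal{C})}(F_{i},\|\cdot\|_{i})^{\bor}$ with $(f_{i}(E_{i}),\|\cdot\|_{i}|_{f_{i}(E_{i})})^{\bor}$. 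For (iv) one combines (ii)--(iii) with Proposition \ref{prop:closedb}(ii): when moreover $(F_{i},\|\cdot\|_{i})$ is Banach, a strict monomorphism into $(F_{i},\|\cdot\|_{i})^{\bor}$ with closed range has domain the bornologification of a Banach space.

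For the concluding equivalence, first suppose $(E,\mathcal{B})$ is bornologically closed in $(F,\mathcal{C})$, that is, $f$ has closed range, and set $(E_{i},\|\cdot\|_{i})^{\bor}:=(E,\mathcal{B})\times_{(F,\mathcal{C})}(F_{i},\|\cdot\|_{i})^{\bor}$; by (i) this is a presentation of $(E,\mathcal{B})$ over the same index category $I$, to which $f$ restricts via the strict monomorphisms $f_{i}$ of closed range supplied by (ii)--(iii). Conversely, given a presentation $(E,\mathcal{B})\cong\colim_{i\in I}(E_{i},\|\cdot\|_{i})^{\bor}$ to which $f$ restricts through strict monomorphisms $f_{i}\colon(E_{i},\|\cdot\|_{i})^{\bor}\to(F_{i},\|\cdot\|_{i})^{\bor}$ with closed range, Corollary \ref{cor:colimclosed} applies verbatim and yields that $f=\colim_{i\in I}f_{i}$ is a strict monomorphism with closed range, i.e.\ that $(E,\mathcal{B})$ is bornologically closed in $(F,\mathcal{C})$.

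The proof is thus essentially bookkeeping. The only two points deserving a word are the observation that the induced morphism $f_{i}$ of the statement is literally the pullback map governed by Proposition \ref{prop:pullback}, and, in the forward direction, the verification that $f$ genuinely restricts to $f_{i}$ on the $i$-th step of the pullback presentation; both are immediate from the universal property of the fibre product, namely the commuting square formed by its two projections against $f$ and the $i$-th linking map. I do not expect a real obstacle here.
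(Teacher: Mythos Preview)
Your proof is correct and follows essentially the same route as the paper's own argument: the paper likewise derives (i)--(iv) directly from Proposition~\ref{prop:pullbackpres}, stability of strict monomorphisms (and closed range) under pullback, and Proposition~\ref{prop:closedb}, and handles the final equivalence via the pullback presentation in one direction and Corollary~\ref{cor:colimclosed} in the other. Your write-up is simply a more detailed unpacking of the same chain of citations.
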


\begin{proof}
    The four enumerated parts follow immediately from Proposition \ref{prop:pullbackpres}, the fact that strict monomorphisms are stable under pullback, and Proposition \ref{prop:closedb}. 
    
\smallskip

    For the final claim, note that if $(E,\mathcal{B})$ is bornologically closed in $(F,\mathcal{C})$, then the required presentation of $(E,\mathcal{B})$ is $(E,\mathcal{B})\cong\colim_{i\in I}(E,\mathcal{B})\times_{(F,\mathcal{C})}(F_{i},\|\cdot\|_{i})^{\bor}$. The converse is Corollary \ref{cor:colimclosed}.
\end{proof}

\begin{cor}\label{cor:subLB}
    Let $(F,\mathcal{C})\cong \colim_{n\in\mathbb{N}}(F_{n},\|\cdot\|_{n})^{\bor}$ be an object of $\LBb$, and let $(E,\mathcal{B})\rightarrow (F,\mathcal{C})$ be a strict monomorphism with closed range. Then $(E,\mathcal{B})$ is an object of $\LBb$.
\end{cor}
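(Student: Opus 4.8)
The plan is to feed the hypothesis straight into Corollary \ref{cor:presclosed}. Since $(F,\mathcal{C})\in\LBb$, its defining isomorphism $(F,\mathcal{C})\cong\colim_{n\in\NN}(F_n,\|\cdot\|_n)^{\bor}$ along a chain of Banach spaces $(F_1,\|\cdot\|_1)\hookrightarrow(F_2,\|\cdot\|_2)\hookrightarrow\cdots$ is in particular a presentation in the sense of p.~\pageref{PRES}, the linking maps being monomorphisms in $\Born$. The given strict monomorphism $\iota\colon(E,\mathcal{B})\to(F,\mathcal{C})$ with closed range realises $(E,\mathcal{B})$ as a bornologically closed bornological subspace of $(F,\mathcal{C})$: a strict monomorphism induces an isomorphism onto its range carrying the subspace bornology, and ``closed range'' says exactly that this range is bornologically closed.

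Applying Corollary \ref{cor:presclosed} with $I=\NN$, parts (i) and (iv) then give that, writing $(E_n,\mathcal{B}_n):=(E,\mathcal{B})\times_{(F,\mathcal{C})}(F_n,\|\cdot\|_n)^{\bor}$, the isomorphism $(E,\mathcal{B})\cong\colim_{n\in\NN}(E_n,\mathcal{B}_n)$ is a presentation of $(E,\mathcal{B})$ and each $(E_n,\mathcal{B}_n)$ is the bornologification of a Banach space, say $(E_n,\mathcal{B}_n)\cong(G_n,\|\cdot\|_{G_n})^{\bor}$ (one may make $G_n$ explicit via parts (ii),(iii) and Proposition \ref{prop:closedb}(ii), though this is not needed). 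Thus $(E,\mathcal{B})$ is a countable colimit in $\Born$ of bornologifications of Banach spaces.

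It remains to check that this colimit has the precise shape demanded by Definition \ref{DFN-LB}(ii). The linking maps $(G_n,\|\cdot\|_{G_n})^{\bor}\to(G_{n+1},\|\cdot\|_{G_{n+1}})^{\bor}$ are monomorphisms in $\Born$, hence injective, since their kernels (which exist because $\Born$ has finite limits) must vanish and the forgetful functor to $k$-vector spaces preserves kernels; and a bounded linear map between bornologifications of normed spaces carries the unit ball to a norm-bounded set, hence is continuous — equivalently, this is the fully faithfulness of $(\,\cdot\,)^{\bor}$ on normal objects, see Fact \ref{Facts}(v) and (vi). Therefore the linking maps are injective continuous linear maps $(G_n,\|\cdot\|_{G_n})\hookrightarrow(G_{n+1},\|\cdot\|_{G_{n+1}})$ between Banach spaces, and $(E,\mathcal{B})\cong\colim_{n\in\NN}(G_n,\|\cdot\|_{G_n})^{\bor}\in\LBb$. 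I expect no genuine obstacle here: the entire weight of the argument is carried by Corollary \ref{cor:presclosed} (and hence, further back, by Propositions \ref{prop:pullbackpres} and \ref{prop:closedb}), while the final paragraph is only the routine translation between morphisms in $\Born$ and continuous linear maps of normed spaces.
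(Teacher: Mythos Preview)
Your proof is correct and follows the same route as the paper: invoke Corollary \ref{cor:presclosed} to obtain the presentation $(E,\mathcal{B})\cong\colim_{n\in\NN}(E,\mathcal{B})\times_{(F,\mathcal{C})}(F_{n},\|\cdot\|_{n})^{\bor}$ with each factor the bornologification of a Banach space. Your final paragraph, verifying that the monomorphic linking maps in $\Born$ correspond to continuous injective maps of Banach spaces via Fact \ref{Facts}(v)--(vi), is a detail the paper leaves implicit but which is worth spelling out given the precise form of Definition \ref{DFN-LB}(ii).
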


\begin{proof}
    Corollary \ref{cor:presclosed} furnishes an explicit presentation $$(E,\mathcal{B})\cong\colim_{n\in\mathbb{N}}(E,\mathcal{B})\times_{(F,\mathcal{C})}(F_{i},\|\cdot\|_{i})^{\bor}$$ with each $(E,\mathcal{B})\times_{(F,\mathcal{C})}(F_{i},\|\cdot\|_{i})^{\bor}$ being the bornologification of a Banach space.
\end{proof}

Corollary \ref{cor:presclosed} may be considered a refinement of \cite[Lem 3.14]{BBBK18} (itself a special case of \cite[Prop 3.2(1)]{HN}), as the following shows.

\begin{cor}\label{cor:bornclosedcomplete}
    Let $(F,\mathcal{C})$ be a complete bornological space and $(E,\mathcal{B})\subseteq (F,\mathcal{C})$. Then $(E,\mathcal{B})$ is complete if and only if it is bornologically closed in $(F,\mathcal{C})$.
\end{cor}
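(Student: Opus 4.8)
The plan is to prove the two implications separately, obtaining the ``closed $\Longrightarrow$ complete'' direction essentially for free from Corollary \ref{cor:presclosed} and handling the converse by a direct Mackey-sequence argument. For the first direction, suppose $(E,\mathcal{B})\subseteq(F,\mathcal{C})$ is bornologically closed, i.e.\ the inclusion has closed range. Since $(F,\mathcal{C})$ is complete it is separated and, by Fact \ref{Facts}(vii), admits a presentation $(F,\mathcal{C})\cong\colim_{i\in I}(F_i,\|\cdot\|_i)^{\bor}$ in which every $(F_i,\|\cdot\|_i)$ is a Banach space. Parts (i) and (iv) of Corollary \ref{cor:presclosed} then produce the presentation $(E,\mathcal{B})\cong\colim_{i\in I}(E,\mathcal{B})\times_{(F,\mathcal{C})}(F_i,\|\cdot\|_i)^{\bor}$, every step of which is the bornologification of a Banach space. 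By Fact \ref{Facts}(vii) once more this means $(E,\mathcal{B})\in\CBorn$, i.e.\ $(E,\mathcal{B})$ is complete; this is the implication that realises the present corollary as a consequence of Corollary \ref{cor:presclosed}.

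For the converse I would assume $(E,\mathcal{B})$ complete and show $E$ is bornologically closed in $(F,\mathcal{C})$ by checking that a Mackey-convergent sequence in $E$ keeps its limit in $E$. So let $(x_m)_{m\in\NN}$ be a sequence in $E$ converging in the sense of Mackey to some $x\in F$, witnessed by an absolutely convex bounded disk $B\in\mathcal{C}$. The steps would be: (1) observe that $\{x_m\mid m\in\NN\}\cup(B\cap E)$ is bounded in $(F,\mathcal{C})$, hence bounded in $(E,\mathcal{B})$ since $\mathcal{B}$ is the subspace bornology, and therefore contained in a Banach disk $D\in\mathcal{B}$ by completeness of $(E,\mathcal{B})$; (2) using that $E$ is a linear subspace, note $\lambda B\cap E=\lambda(B\cap E)\subseteq\lambda D$, so that $(x_m)$ is $\|\cdot\|_D$-Cauchy and hence converges in the Banach space $(E_D,\|\cdot\|_D)$ to some $y\in E_D\subseteq E$; (3) by Corollary \ref{cor:exbanclosed} this convergence is in the sense of Mackey in $(E_D,\|\cdot\|_D)^{\bor}$, and pushing it forward along the bounded maps $(E_D,\|\cdot\|_D)^{\bor}\to(E,\mathcal{B})\hookrightarrow(F,\mathcal{C})$ shows $(x_m)$ converges to $y$ in the sense of Mackey in $(F,\mathcal{C})$ as well; (4) since $(F,\mathcal{C})$ is separated, Mackey limits are unique, so $x=y\in E$. (One sees that this direction in fact only uses separatedness of $(F,\mathcal{C})$, and that complete bvs are semi-complete, as recalled in the excerpt.)

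The main obstacle is the transfer carried out in step (1)--(2): the bounded set $B$ that witnesses the Mackey convergence lives in $\mathcal{C}$, not a priori in $\mathcal{B}$, so one cannot immediately treat $(x_m)$ as a Cauchy--Mackey sequence in $(E,\mathcal{B})$. The point that unblocks this is precisely that $E$ is a \emph{linear} subspace and all the $x_m$, hence all their differences, lie in $E$, so $\lambda B\cap E=\lambda(B\cap E)$ with $B\cap E$ bounded in $(E,\mathcal{B})$; completeness then upgrades $B\cap E$ to a Banach disk, after which everything is routine bookkeeping with Corollary \ref{cor:exbanclosed} and with uniqueness of Mackey limits in a separated bvs. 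If one wished to stay inside the framework of Corollary \ref{cor:presclosed} for this direction too, one could instead fix a Banach presentation of $(F,\mathcal{C})$ and verify that each pullback step $(E,\mathcal{B})\times_{(F,\mathcal{C})}(F_i,\|\cdot\|_i)^{\bor}\cong(E\cap F_i,\|\cdot\|_i|_{E\cap F_i})^{\bor}$ (Proposition \ref{prop:closedb}(i)) has closed range in $(F_i,\|\cdot\|_i)^{\bor}$, i.e.\ that $(E\cap F_i,\|\cdot\|_i|_{E\cap F_i})$ is Banach; but proving that completeness of $(E,\mathcal{B})$ forces this requires essentially the same Mackey-completeness argument, packaged less transparently, so I would present the direct version.
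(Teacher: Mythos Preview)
Your argument is correct. The ``closed $\Longrightarrow$ complete'' direction matches the paper's proof exactly. For the converse, however, the paper takes precisely the route you sketch in your last paragraph and then set aside: it stays inside the framework of Corollary~\ref{cor:presclosed}, forms the pullback presentation $(E,\mathcal{B})\cong\colim_{i\in I}(E,\mathcal{B})\times_{(F,\mathcal{C})}(F_i,\|\cdot\|_i)^{\bor}$, and shows each step is Banach. Your assessment that this ``requires essentially the same Mackey-completeness argument'' is not quite right: the paper bypasses any sequence computation by invoking the categorical fact that $\CBorn$ is reflective in $\Born$ (\cite[Prop~5.11]{PS00}), hence closed under limits, so the fibre product of the complete objects $(E,\mathcal{B})$, $(F,\mathcal{C})$, $(F_i,\|\cdot\|_i)^{\bor}$ is automatically complete; a complete bornologification of a semi-normed space is then Banach, and a Banach subspace of a Banach space is topologically (hence bornologically) closed. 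Your direct Mackey-sequence argument is more elementary and self-contained, and makes transparent that only separatedness of $(F,\mathcal{C})$ is needed for this implication; the paper's argument is shorter and fits the categorical spirit of Section~\ref{SEC:4}, in particular it generalises more readily to the $\Born_R$ setting flagged in Remark~\ref{rmk:banring}.
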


\begin{proof}
    Since $(F,\mathcal{C})$ is complete, it has a presentation $(F,\mathcal{C})\cong\colim_{i\in I}(F_{i},\|\cdot\|_{i})^{\bor}$ with each $(F_{i},\|\cdot\|_{i})$ being a Banach space.

\smallskip

    \textquotedblleft{}$\Longleftarrow$\textquotedblright{} If $(E,\mathcal{B})$ is bornologically closed in $(F,\mathcal{C})$ then, by Corollary \ref{cor:presclosed},
    \begin{equation}\label{PR}
    (E,\mathcal{B})\cong\colim_{i\in I}(E,\mathcal{B})\times_{(F,\mathcal{C})}(F_{i},\|\cdot\|_{i})^{\bor}
    \end{equation}
    is a presentation with each $(E,\mathcal{B})\times_{(F,\mathcal{C})}(F_{i},\|\cdot\|_{i})^{\bor}$ the bornologification of a Banach space.
    
\smallskip

    \textquotedblleft{}$\Longrightarrow$\textquotedblright{} Conversely, suppose that $(E,\mathcal{B})$ is complete. We still have the presentation \eqref{PR}. By \cite[Prop 5.11]{PS00}, $\CBorn$ is a reflective subcategory of $\Born$, and hence is closed under limits. Thus $(E,\mathcal{B})\times_{(F,\mathcal{C})}(F_{i},\|\cdot\|_{i})^{\bor}$ is complete. Since $(E,\mathcal{B})\times_{(F,\mathcal{C})}(F_{i},\|\cdot\|_{i})^{\bor}\cong (E_{i},\|\cdot\|_{i})^{\bor}$ with $(E_{i},\|\cdot\|_{i})$ semi-normed, we must have in fact that $(E_{i},\|\cdot\|_{i})$ is Banach. Now, each $(E_{i},\|\cdot\|_{i})^{\bor}\rightarrow (F_{i},\|\cdot\|_{i})^{\bor}$ is strict. Hence the semi-normed space $(E_{i},\|\cdot\|_{i})$ is isomorphic to its range regarded as a subspace of $(F_{i},\|\cdot\|_{i})$. Since this range is Banach, it is complete, and hence must be topologically closed in $(F_{i},\|\cdot\|_{i})$. Thus it is bornologically closed, by Corollary \ref{cor:exbanclosed}. We can now conclude by Corollary \ref{cor:presclosed}.
\end{proof}

We are now going to show that if $(E,\mathcal{B})\rightarrow (F,\mathcal{C})$ is a strict monomorphism with bornologically closed range, and $(F,\mathcal{C})\in\bLB$, then $(E,\mathcal{B})\in\bLB$. In fact, we shall show something more. Under the equivalence $\NBorn\cong\Precon$ mentioned in Section \ref{SEC-2} before Facts \ref{Facts}, those preconvenient vector spaces that are complete as bornological vector spaces are called \textit{convenient vector spaces} \cite[Dfn 2.6.3]{MR961256}. The full subcategory of $\Precon$ consisting of convenient vector spaces is denoted $\Con$, and the equivalence $\NBorn\cong\Precon$ restricts to an equivalence 
    \[ \NBorn\cap\CBorn\cong\Con . \] 
By abuse of terminology, we will say that a bornological space $(E,\mathcal{B})$ is \textit{convenient} if it belongs to the subcategory $\NBorn\cap\CBorn\subset\Born$. In particular, any object of $\bLB\subset\NBorn\cap\CBorn$ is convenient.

\begin{thm}\label{thm:NBornsub}
    Suppose that  $f\colon(E,\mathcal{B})\rightarrow (F,\mathcal{C})$ is a strict monomorphism in $\Born$. \vspace{2pt}
    \begin{compactitem}
        \item[(i)] If $(F,\mathcal{C})$ is in $\NBorn$ then so is $(E,\mathcal{B})$.\vspace{2pt}
        \item[(ii)] If $f$ additionally has closed range, and $(F,\mathcal{C})$ is convenient, then $(E,\mathcal{B})$ is convenient.\vspace{2pt}
        \item[(iii)] If $f$ additionally has closed range, and $(F,\mathcal{C})\in\bLB$, then $(E,\mathcal{B})\in\bLB$.
    \end{compactitem}
\end{thm}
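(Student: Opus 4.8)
The plan is to obtain parts (i)--(iii) from the abstract results of Section~\ref{SEC:4} combined with the subobject lemmas established above; part~(i) carries the real content, and (ii), (iii) then follow by adjoining the completeness and $\LBb$ statements.

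For part~(i), I would apply Proposition~\ref{prop:cancelsub} to the idempotent adjunction $(\,\cdot\,)^{\topp}\dashv(\,\cdot\,)^{\bor}$ of \eqref{FUN}, taking $\Born$ in the role of $\ensuremath{\text{\bf\textsf{C}}}$ and $\Tc$ in the role of $\ensuremath{\text{\bf\textsf{D}}}$, so that $\Born^{\eta}=\NBorn$ by Example~\ref{ex:idempotentbt}. Two hypotheses of the proposition must be verified. First, that $\eta_{(E,\mathcal{B})}\colon(E,\mathcal{B})\rightarrow(E,\mathcal{B})^{\topp\bor}$ is an epimorphism for every bvs: this is immediate, since on underlying vector spaces it is the identity, hence surjective, and surjective morphisms in the concrete category $\Born$ are epimorphisms. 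Second, that the class $\mathcal{S}$ of strict monomorphisms of $\Born$ consists of regular monomorphisms, contains the isomorphisms, is stable under composition, and satisfies the left cancellation property: as $\Born$ is quasi-abelian, $\mathcal{S}$ is precisely the class of admissible monomorphisms for the quasi-abelian exact structure; these are kernels, hence regular monomorphisms, they contain all isomorphisms, and they compose; and since $\Born$ has all kernels it is weakly idempotent complete, so the left cancellation property follows from the Obscure Axiom exactly as in Example~\ref{ex:leftcancelexact}. Proposition~\ref{prop:cancelsub} then applies to $f$ and yields $(E,\mathcal{B})\in\NBorn$.

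For part~(ii), part~(i) already gives $(E,\mathcal{B})\in\NBorn$, so only completeness remains. Since $f$ is a strict monomorphism with bornologically closed range, $(E,\mathcal{B})$ is isomorphic to the bornologically closed subspace $f(E)\subseteq(F,\mathcal{C})$; as $(F,\mathcal{C})$ is convenient, hence in particular complete, Corollary~\ref{cor:bornclosedcomplete} shows that $f(E)$, and therefore $(E,\mathcal{B})$, is complete, i.e.\ $(E,\mathcal{B})\in\NBorn\cap\CBorn$. For part~(iii), I would invoke Corollary~\ref{cor:subLB} for $f$, using $(F,\mathcal{C})\in\bLB\subseteq\LBb$, to get $(E,\mathcal{B})\in\LBb$; together with $(E,\mathcal{B})\in\NBorn$ from part~(i), the identity $\bLB=\LBb\cap\NBorn$ of Theorem~\ref{t:bLb=LBb+NBorn} gives $(E,\mathcal{B})\in\bLB$.

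The only point that is not purely formal is checking that strict monomorphisms in $\Born$ satisfy left cancellation, which rests on $\Born$ being weakly idempotent complete so that the Obscure Axiom applies; beyond that, the proof amounts to feeding the correct objects into Proposition~\ref{prop:cancelsub}, Corollary~\ref{cor:bornclosedcomplete}, Corollary~\ref{cor:subLB}, and Theorem~\ref{t:bLb=LBb+NBorn}, and I do not anticipate a genuine obstacle.
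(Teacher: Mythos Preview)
Your proof is correct and follows essentially the same route as the paper's: part~(i) via Proposition~\ref{prop:cancelsub} applied through Examples~\ref{ex:idempotentbt} and~\ref{ex:leftcancelexact}, part~(ii) via Corollary~\ref{cor:bornclosedcomplete}, and part~(iii) via Corollary~\ref{cor:subLB} together with $\bLB=\LBb\cap\NBorn$. Your write-up is in fact more detailed than the paper's, which simply cites these ingredients without spelling out the verification of the hypotheses of Proposition~\ref{prop:cancelsub}.
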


\begin{proof} (i) This follows immediately from Example \ref{ex:idempotentbt}, Example \ref{ex:leftcancelexact}, and Proposition \ref{prop:cancelsub}. 

\smallskip
 
(ii) This follows from the first part, and Corollary \ref{cor:bornclosedcomplete}.

\smallskip

(iii) Since $\bLB=\LBb\cap\NBorn$, this follows from the first part, and Corollary \ref{cor:subLB}.
\end{proof}

\begin{rmk}Let $f\colon(E,\mathcal{B})\rightarrow (F,\mathcal{C})$ be a strict monomorphism in $\Born$ with $(F,\mathcal{C})\in\NBorn$, so that $(E,\mathcal{B})$ is then in $\NBorn$. Under the equivalence $\NBorn\cong\Precon$, such strict monomorphisms coincide with $\Precon$-embeddings in the sense of \cite[Dfn 2.4.6]{MR961256}. In this language, Theorem \ref{thm:NBornsub}(i) can be understood as a reformulation of \cite[Rmk 2.4.7]{MR961256}. When $(F,\mathcal{C})$ is also complete as a bornological vector space and $f$ has closed range, Theorem \ref{thm:NBornsub}(ii) is equivalent to a combination of \cite[Rmk 2.4.7]{MR961256}, and the claim in \cite[Prop 2.6.4]{MR961256} that a preconvenient space admitting a $\Precon$-embedding into a convenient space with bornologically closed range (or $M$-\textit{closed} range in the terminology of loc.\ cit.) is convenient. Note that our proof strategy for Theorem \ref{thm:NBornsub}(i) using Proposition \ref{prop:cancelsub} is purely categorical and quite general as the next two remarks show.\hfill\diam 
\end{rmk}

\begin{rmk}
    The category $\Tc$ is also quasi-abelian. Entirely dually (in the categorical sense) to Theorem \ref{thm:NBornsub}, using Proposition \ref{prop:cancelsub} one can show that if $g\colon(X,\tau_{X})\rightarrow (Y,\tau_{Y})$ is a strict epimorphism (equivalently, $g$ is a surjection and $\tau_{Y}$ is the locally convex quotient topology), and $(X,\tau)\in\NTc$, then $(Y,\tau_{Y})\in\NTc$.\hfill\diam 
\end{rmk}

We mentioned at the beginning of this section that some results of this section generalise to the setting of bornological modules over Banach rings; see, e.g., \cite{ben2024perspective} for a definition of the latter and further background to our concluding remark below.

\begin{rmk}\label{rmk:banring} Let $R$ be a Banach ring.  By definition, the category $\Born_{R}$ of bornological $R$-modules is the essentially monomorphic subcategory of the ind-completion $\ensuremath{\text{\bf\textsf{Ind(SNrm}}_{R}\textsf{)}}$ of the category $\ensuremath{\text{\bf\textsf{SNrm}}_{R}}$ of semi-normed $R$-modules with morphisms being bounded maps of $R$-modules. Similarly, $\CBorn_{R}$ is defined by replacing $\ensuremath{\text{\bf\textsf{SNrm}}}_{R}$ with the subcategory $\Ban_{R}$ of complete semi-normed $R$-modules. Modulo some set-theoretic finessing as in Proposition \ref{prop:pullbackpres}, more generally one can start with any (small) quasi-abelian category $\ensuremath{\text{\bf\textsf{E}}}$, and consider the full subcategory of essentially monomorphic objects $\Born\ensuremath{\text{\bf\textsf{(E)}}}\subseteq\ensuremath{\text{\bf\textsf{Ind(E}}\textsf{)}}.$ Of the results above,  Corollary \ref{cor:colimclosed} and Proposition \ref{prop:pullbackpres} can be generalised to this abstract setting. 
    Corollary \ref{cor:presclosed} (i) and (ii) can be generalised as long as $\ensuremath{\text{\bf\textsf{E}}}$ is closed under strict subobjects in $\Born\ensuremath{\text{\bf\textsf{(E)}}}$ \--- notice that in the setting of the present paper, subobject stability is guaranteed by Proposition \ref{prop:closedb}(i). Moreover one can define internal LB-spaces in $\Born\ensuremath{\text{\bf\textsf{(E)}}}$ in the obvious way and, assuming subobject stability of $\ensuremath{\text{\bf\textsf{E}}}$,
an analogue of Corollary \ref{cor:subLB} holds. In \cite{Kellyconvenient}, we will consider the generalisation of Theorem \ref{thm:NBornsub} to $\Born_{R}$ and $\CBorn_{R}$.
\end{rmk}


\end{document}